\crefname{equation}{}{}
\newtheorem{lemma}{Lemma}[section]
\newtheorem{remark}[lemma]{Remark}
\newtheorem{theorem}[lemma]{Theorem}
\newtheorem{corollary}[lemma]{Corollary}
\newtheorem{setting}[lemma]{Setting}
\crefname{subsection}{Subsection}{Subsections}
\crefname{framework}{Framework}{Frameworks}
\crefname{setting}{Setting}{Settings}
\crefname{enumi}{Item}{Items}
\newcommand{\concat}{\odot}
\newcommand{\supnorm}[1]{{\left\vert\kern-0.25ex\left\vert\kern-0.25ex\left\vert #1 
    \right\vert\kern-0.25ex\right\vert\kern-0.25ex\right\vert}}
\newcommand{\cardna}{\dim}
\newcommand{\card}[1]{\dim\!\left( #1\right)}
\newcommand{\1}{\ensuremath{\mathbbm{1}}}
\providecommand{\N}{{\ensuremath{\mathbbm{N}}}}
\providecommand{\Z}{{\ensuremath{\mathbbm{Z}}}}
\providecommand{\R}{{\ensuremath{\mathbbm{R}}}}
\providecommand{\E}{{\ensuremath{\mathbb{E}}}}
\renewcommand{\P}{{\ensuremath{\mathbb{P}}}}
\newcommand{\funcPhi}[1]{\Phi_{#1}}
\newcommand{\netflow}[2]{\Phi_{#1}^{#2}}
\newcommand{\funcSmallF}{f}
\newcommand{\uniform}{\ensuremath{\mathcal{U}}}
\newcommand{\unif}{\ensuremath{\mathfrak{u}}}
\newcommand{\Exp}[1]{ \E \! \left[ #1 \right]}
\DeclareMathOperator*{\dimsum}{\boxplus}
\newcommand{\funcF}{F}
\newcommand{\funcG}{g}
\newcommand{\LipConstF}{L}
\newcommand{\fwpr}{\ensuremath{\mathbf{W}}} 
\newcommand{\sppr}{\ensuremath{W}}
\newcommand{\bigU}{U}
\newcommand{\normRd}[1]{\left\|{#1}\right\|}
\newcommand{\smallU}{u}
\newcommand{\boundFG}{B}
\newcommand{\cardL}[1]{\dim\!\left(\funcCalL\left(#1\right)\right)}
\newcommand{\setCalN}{\mathbf{N}}
\newcommand{\funcCalP}{\mathcal{P}}
\newcommand{\funcCalR}{\mathcal{R}}
\newcommand{\setCalD}{\mathbf{D}}
\newcommand{\netapproxg}[2]{\mathfrak{g}_{#1,#2}}
\newcommand{\netapproxf}[1]{\mathfrak{f}_{#1}}
\newcommand{\identity}[1]{\mathrm{Id}_{#1}}
\newcommand{\funcCalL}{\mathcal{D}}
\newcommand{\neutralDim}[1]{\mathfrak{n}_{#1}}
\newcommand{\funcBoldA}[1]{\mathbf{A}_{#1}}
\newcommand{\eps}{\varepsilon}
\newcommand{\gp}{\xi}
\title{
A proof that rectified deep neural networks\\
overcome the curse of dimensionality in the numerical\\
approximation of semilinear
heat equations}
\author{
Martin Hutzenthaler$^{1}$, Arnulf Jentzen$^{2}$, Thomas Kruse$^{3}$, \& Tuan Anh Nguyen$^{4}$ 
\bigskip
\\
\small{$^1$ Faculty of Mathematics, University of Duisburg-Essen,}
\\
\small{45117 Essen, Germany, e-mail: martin.hutzenthaler@uni-due.de}
\smallskip
\\
\small{$^2$ SAM, Department of Mathematics,
ETH Zurich,}
\\
\small{8092 Zurich, Switzerland, e-mail: arnulf.jentzen@sam.math.ethz.ch}
\smallskip
\\
\small{$^3$ Institute of Mathematics, University of Gie{\ss}en,}
\\
\small{35392 Gie{\ss}en, Germany, e-mail: thomas.kruse@math.uni-giessen.de}
\smallskip
\\
\small{$^4$ Faculty of Mathematics, University of Duisburg-Essen,}
\\
\small{45117 Essen, Germany, e-mail: tuan.nguyen@uni-due.de}
}
\begin{document}

\maketitle
\makeatletter
\let\@makefnmark\relax
\let\@thefnmark\relax
\@footnotetext{\emph{AMS 2010 subject classification:} 65C99;
68T05}
\@footnotetext{\emph{Key words and phrases:} curse of dimensionality, high-dimensional PDEs, deep neural networks, information based complexity,
tractability of multivariate problems, 
multilevel Picard approximations
}
\makeatother

\begin{abstract}
 Deep neural networks and other deep learning methods have very successfully
 been applied to the numerical approximation of high-dimensional
 nonlinear
 parabolic partial differential equations (PDEs), which are widely used in finance, engineering,
 and natural sciences.
 In particular, simulations indicate that algorithms based on deep learning
 overcome the curse of dimensionality in the numerical approximation
 of solutions of semilinear PDEs. For certain linear PDEs this has also been proved
 mathematically.
 The key contribution of this article is to rigorously prove this for the first time for a class of nonlinear PDEs.
 More precisely,
 we prove
 in the case of semilinear heat equations with gradient-independent nonlinearities
 that the numbers of parameters of the employed deep neural networks
 grow at most polynomially in both the PDE dimension and
 the reciprocal of the prescribed approximation accuracy.
 Our proof relies on recently introduced full history recursive multilevel Picard approximations
 of semilinear PDEs.
\end{abstract}

\tableofcontents

\section{Introduction}
Deep neural networks (DNNs) have revolutionized
a number of computational problems; see, e.g., the references in Grohs et al.\ \cite{GHJvW18}.
In 2017 deep learning-based approximation algorithms for certain parabolic partial differential equations (PDEs) have been proposed in Han~et~al.~\cite{EHJ17,HJE18} and based on these works there is now a series of deep learning-based numerical approximation algorithms for a large class of different kinds of PDEs in the scientific literature; see, e.g.,
\cite{BBG+18,BEJ17,BCJ18,EY18,EGJS18,FTT17,GHJvW18,Hen17,KLY17,Mis18,NM18,Rai18,SS17}.
%
There is empirical evidence that deep learning-based methods work exceptionally well
for approximating solutions of high-dimensional PDEs and that these do not suffer from the
\emph{curse of dimensionality}; see, e.g., the simulations in
\cite{EHJ17,HJE18,BEJ17,BBG+18}.
There exist, however, only few theoretical results which prove
that DNN approximations of solutions of PDEs do not suffer
from the curse of dimensionality:
The recent articles \cite{GHJvW18,BGJ18,JSW18, EGJS18}
prove rigorously that DNN approximations overcome the curse 
of dimensionality in the numerical 
approximation of solutions of certain linear PDEs.

The main result of this article, Theorem~\ref{n18} below, proves 
for
semilinear heat equations
with gradient-independent nonlinearities
that the number of parameters
of the approximating DNN grows at most polynomially in both
the PDE dimension $d\in\N$ and the reciprocal of the prescribed accuracy $\eps>0$.
Thereby, we establish for the first time that there exist DNN approximations of solutions of such PDEs
which indeed overcome  the curse of dimensionality.
To illustrate the main result of this article we formulate in the following result,
Theorem~\ref{thm:intro} below, a special case of Theorem~\ref{n18}.
%
%
%
\begin{theorem}\label{thm:intro}
Let $\funcBoldA{d}\colon \R^d\to\R^d$, $d\in \N=\{1,2,\ldots\}$, and $\left\|\cdot \right\|\colon (\cup_{d\in \N} \R^d) \to [0,\infty)$
satisfy for all $d\in \N$, $x=(x_1,\ldots,x_d)\in \R^d$ that 
$
\funcBoldA{d}(x)= \left(\max\{x_1,0\},\ldots,\max\{x_d,0\}\right)
$
and 
$\|x\|=[\sum_{i=1}^d(x_i)^2]^{1/2}$,
let
$
\setCalN= \cup_{H\in  \N}\cup_{(k_0,k_1,\ldots,k_{H+1})\in \N^{H+2}}
[ \prod_{n=1}^{H+1} \left(\R^{k_{n}\times k_{n-1}} \times\R^{k_{n}}\right)],
$
let
$\funcCalR\colon \setCalN\to (\cup_{k,l\in \N} C(\R^k,\R^l))$ and
$\funcCalP\colon\setCalN\to \N$
 satisfy
for all $H\in \N$, $k_0,k_1,\ldots,k_H,k_{H+1}\in \N$,
$
\Phi = ((W_1,B_1),\ldots,(W_{H+1},B_{H+1}))\in \prod_{n=1}^{H+1} \left(\R^{k_n\times k_{n-1}} \times\R^{k_n}\right), 
$
$x_0 \in \R^{k_0},\ldots,x_{H}\in \R^{k_{H}}$ with 
$\forall\, n\in \N\cap [1,H]\colon x_n = \funcBoldA{k_n}(W_n x_{n-1}+B_n )$ 
that
\vspace{-5mm}
\begin{equation*}
\funcCalR(\Phi )\in C(\R^{k_0},\R ^ {k_{H+1}}),\quad
 (\funcCalR(\Phi)) (x_0) = W_{H+1}x_{H}+B_{H+1}, \quad\text{and}\quad \funcCalP(\Phi )=\textstyle{\sum\limits_{n=1}^{H+1}}k_n(k_{n-1}+1),
\end{equation*}
\vspace{-6mm}

\noindent
let
$T,\kappa \in (0,\infty)$, $f\in C(\R,\R)$, $(\netapproxg{d}{\varepsilon})_{d\in \N,\varepsilon\in (0,1]}\subseteq \setCalN$,
$(c_d)_{d\in\N}\subseteq (0,\infty)$,
for every $d\in \N$ let $g_d\in C(\R^d,\R)$, 
  for every $d\in \N$ let $u_d\in C^{1,2}([0,T]\times\R^d,\R)$,
and assume for all $d\in \N$, $v,w\in \R$, $x\in \R^d$, $\varepsilon \in (0,1]$, $t\in(0,T)$ that
$|f(v)-f(w)|\le \kappa|v-w|$,
$\funcCalR(\netapproxg{d}{\varepsilon})\in C(\R^d,\R)$,
$|(\funcCalR(\netapproxg{d}{\varepsilon}))(x)|\le \kappa d^\kappa (1+\normRd{x}^\kappa)$,
 $\left| g_d(x)-(\funcCalR(\netapproxg{d}{\varepsilon}))(x)\right| \le \varepsilon \kappa d^\kappa (1+\normRd{x}^\kappa)$,
 $\funcCalP(\netapproxg{d}{\varepsilon})\le \kappa d^\kappa \varepsilon^{-\kappa}$,
 $|u_d(t,x)|\le c_d(1+\normRd{x}^{c_d})$,
$u_d(0,x)=g_d(x)$,
and
\begin{equation}  \begin{split}\label{eq:intro.PDE}
  (\tfrac{\partial}{\partial t}u_d)(t,x)=(\Delta_xu_d)(t,x)+f(u_d(t,x)).
\end{split}     \end{equation}
Then
there exist $(\Psi_{d,\varepsilon})_{d\in \N,\varepsilon \in (0,1]}\subseteq \setCalN$, $\eta \in (0,\infty)$
such that for all
$d\in \N$, $\varepsilon\in (0,1]$ it holds that
$\funcCalR(\Psi_{d,\varepsilon})\in C(\R^d,\R)$, 
$\funcCalP(\Psi_{d,\varepsilon})\le\eta  d^\eta \varepsilon^{-\eta}$, and
\begin{equation}
\left[\int_{[0,1]^d}\left|\smallU_d(T,x)-(\funcCalR(\Psi_{d,\varepsilon}))(x)\right|^2\, dx\right]^{\!\nicefrac{1}{2}}\le \varepsilon.
\end{equation}
\end{theorem}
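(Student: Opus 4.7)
The plan is to combine the full-history recursive multilevel Picard (MLP) approximations of the semilinear PDE with a ``DNN calculus'' that represents any such MLP approximant as a single ReLU DNN. By the nonlinear Feynman--Kac formula the solution of \cref{eq:intro.PDE} satisfies
\begin{equation*}
u_d(t,x) = \EXP{g_d(x+\sqrt{2}\,W_t)} + \int_0^t \EXP{f(u_d(t-s,\,x+\sqrt{2}\,W_s))}\,ds,
\end{equation*}
where $W$ is a standard $d$-dimensional Brownian motion. For $n,M\in\N$ I would introduce the MLP approximations $\bigU^{M,n}_d(t,x)$ obtained from this identity by replacing both expectations with empirical averages over independent copies of $W$ and of a uniform random time on $[0,t]$, with the nonlinearity handled through the usual telescoping $f(\bigU^{M,l}_d)-f(\bigU^{M,l-1}_d)$ for $l<n$. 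Existing MLP error estimates for Lipschitz nonlinearities and polynomially growing initial data then yield, for suitable choices $n\sim |\!\log\eps|$ and $M\sim n$, a bound of the form $(\EXP{|u_d(T,x)-\bigU^{M,n}_d(T,x)|^2})^{\nicefrac12}\le \eps\,\kappa d^\kappa(1+\normRd{x}^\kappa)$; integration over $x\in[0,1]^d$ via Fubini would then produce a random $\bigU^{M,n}_d(T,\cdot)$ whose mean-$L^2([0,1]^d)$-distance to $u_d(T,\cdot)$ is $O(\eps)$, while the underlying MLP tree has only $\mathrm{poly}(d,\eps^{-1})$ leaves.

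The second step would turn every realization of the MLP tree into an actual DNN. By construction every MLP approximant is a finite sum of compositions of $g_d$ or $f$ with affine maps on $\R^d$ (given by realizations of Brownian path samples). Using the assumed DNN approximant $\netapproxg{d}{\delta}$ of $g_d$ together with a ReLU approximation $\netapproxf{\delta}$ of the one-dimensional Lipschitz function $f$ on an interval $[-R,R]$ --- whose length $R$ is $\mathrm{poly}(d)$ by the assumed polynomial growth of $u_d$ and the global Lipschitzness of $f$, and whose parameter count is $\mathrm{poly}(\delta^{-1})$ via standard hat-function interpolation --- and then repeatedly applying the elementary DNN operations of parallelization (stacking subnetworks with a summed output), affine pre- and post-composition, and composition, I would obtain for each realization $\omega$ of the MLP randomness a ReLU DNN $\Psi^{\omega}_{d,\eps}\in\setCalN$ with $\funcCalR(\Psi^{\omega}_{d,\eps})\in C(\R^d,\R)$ and parameter count bounded by the number of MLP tree leaves times $\funcCalP(\netapproxg{d}{\delta})+\funcCalP(\netapproxf{\delta})$, which is $\mathrm{poly}(d,\eps^{-1})$.

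In the third step I would calibrate $\delta=\delta(\eps,d)$ so that the propagation of the $\netapproxf{\delta}$-approximation error through the $n$ MLP levels remains controlled by the Lipschitz constant $\kappa$ (giving an amplification factor of at most $(1+\kappa T)^n$, which is sub-polynomial in $\eps^{-1}$ by the choice $n\sim|\!\log\eps|$). Combining this with the MLP $L^2$-bound of the first step yields $\Exp{\int_{[0,1]^d}|u_d(T,x)-(\funcCalR(\Psi^{\omega}_{d,\eps}))(x)|^2\,dx}\le \eps^2$, and a measurable-selection / derandomization argument then picks a deterministic realization $\Psi_{d,\eps}\in\setCalN$ for which the integrand is at most $\eps^2$, yielding the required DNN with $\funcCalP(\Psi_{d,\eps})\le \eta d^\eta \eps^{-\eta}$.

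The main obstacle, in my view, is not the MLP error analysis itself (which is already present in the cited MLP literature) but the two-level approximation bookkeeping: keeping track of how the number of leaves of the MLP tree, the parameter count of $\netapproxg{d}{\delta}$, and the parameter count of $\netapproxf{\delta}$ all blow up multiplicatively through composition and parallelization, while simultaneously ensuring that the choice of $\delta$, $M$, and $n$ keeps the \emph{total} error below $\eps$ and the \emph{total} parameter count polynomial in $d$ and $\eps^{-1}$. Formulating a clean inductive DNN calculus --- in particular, lemmas on the parameter cost of composing, stacking, and summing networks --- that makes this recursion go through uniformly in~$d$ is where the bulk of the technical work will lie.
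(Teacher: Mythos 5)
Your proposal follows essentially the same route as the paper: run the MLP scheme with the DNN surrogates $\netapproxg{d}{\delta}$ and a piecewise-linear ReLU interpolant $\netapproxf{\delta}$ of $f$, represent each realization exactly by a DNN via composition/parallelization/affine lemmas, control the $\delta$-perturbation and MLP errors separately, and derandomize by a Fubini/mean argument. The only cosmetic differences are that the paper absorbs the $\netapproxf{\delta}$-error through a PDE-level stability estimate (with an $n$-independent constant $\left(e^{\kappa T}(T+1)\right)^{q+1}$) rather than propagating it through the $n$ MLP levels, and your amplification factor $(1+\kappa T)^n$ with $n\sim|\!\log\eps|$ is polynomial rather than sub-polynomial in $\eps^{-1}$ --- which is still harmless for the final polynomial parameter bound.
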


\fussy
\cref{thm:intro} is an immediate consequence of \cref{cor:main_thm} in \cref{subsec:dnn_approx_gen_pol} below
(with $T=2T$, $u_d(t,x)=u_d(T-\frac{t}{2},x)$, $f(v)=f(v)/2$ for $t\in [0,2T]$, $x\in \R^d$, $v\in \R$ in the notation of 
\cref{cor:main_thm}). 
In the manner of the proof of Theorem 3.14 in \cite{GHJvW18} and the proof of Theorem 6.3 in \cite{JSW18}, the proof of \cref{n18} below uses probabilistic arguments on a suitable artificial probability space.
Moreover, the proof of \cref{n18} relies on recently introduced full history recursive multilevel Picard (MLP) approximations
which have been proved to overcome the curse of dimensionality
in the numerical approximation of solutions of semilinear heat equations at single space-time points;
see 
\cite{EHJK16,EHJK17,HK17,HJK+18}.
A key step in our proof is that realizations of certain MLP approximations
can be represented by DNNs; see \cref{b26} below.

The remainder of this article is organized as follows. In \cref{sec:stab_MLP} we provide auxiliary results on multilevel Picard approximations ensuring that these approximations are stable against perturbations in the nonlinearity $f$ and the terminal condition $g$ of the PDE \cref{eq:intro.PDE}.
In \cref{sec:Picard} we show that multilevel Picard approximations can be represented by DNNs and we provide bounds for the number of parameters of the representing DNN. 
We use the results of \cref{sec:stab_MLP} and \cref{sec:Picard} to prove the main result \cref{n18} in \cref{sec:main_result}.

\section[A stability result for multilevel Picard (MLP) approximations]{A stability result for full history recursive multilevel Picard (MLP) approximations}
\label{sec:stab_MLP}
\subsection{Setting}
\begin{setting}
\label{t00}
Let $d\in \N$, $T,L,\delta,\boundFG \in (0,\infty)$, $p,q\in [1,\infty)$, $f_1,f_2\in C\left( [0,T]\times \R^{d}\times\R,\R\right)$,
$g_1,g_2\in C(\R^d,\R)$,
let $\|\cdot \|\colon \R^d \to [0,\infty)$ satisfy for all $x=(x_1,\ldots,x_d)\in \R^d$ that $\|x\|=[\sum_{i=1}^d(x_i)^2]^{1/2}$,
assume for all $t\in [0,T]$,
$x\in \R^d$,
$w,v\in \R$, $i\in \{1,2\}$ that
\begin{align}\label{t01}
\left|f_i(t,x,w)-f_i(t,x,v)\right|\leq \LipConstF\left|w-v\right|,
\end{align}
\begin{align}
\max\left\{\big.\!\left|\funcSmallF_i(t,x,0)\right|,\left|\funcG_i(x)\right|\right\}\leq \boundFG\left(1+\normRd{x}\big.\!\right)^p,\label{m01}
\end{align}
and
\begin{align}
\max\left\{\big.\!\left|\funcSmallF_1(t,x,v)-\funcSmallF_2(t,x,v)\right|, \left|\funcG_1(x)-\funcG_2(x)\right|\right\}\leq \delta\left(\Big.\!\left(\big.1+\normRd{x}\right)^{pq}+|v|^q\big.\right),\label{m02}
\end{align}
let $\funcF_i\colon C\left([0,T]\times \R^d,\R\right)\to C\left([0,T]\times \R^d,\R\right)$, $i\in \{1,2\}$,
satisfy for all $v\in C\left([0,T]\times \R^d,\R\right)$, $t\in [0,T]$, 
$x\in  \R^d$, $i\in \{1,2\}$ that
\begin{equation}  \label{t02}
(\funcF_i(v))(t,x) = f_i(t,x,v(t,x)),
\end{equation}
let
$(\Omega, \mathcal{F}, \P)$
be a probability space, let $\fwpr \colon [0,T]\times \Omega\to \R^d$ be a standard 
Brownian motion with continuous sample paths,
let $\smallU_1,\smallU_2\in C([0,T]\times \R^d,\R)$, assume  
for all $i\in \{1,2\}$, $s\in[0,T]$, $x\in\R^d$ that
\begin{align}
\E\!\left[\left|g_i\left(x+\fwpr_{T-s}\right)\big.\!\right|+\int_s^T\left| \left(\funcF_i(\smallU_i)\right)\left(t,x+\fwpr_{t-s}\right)\right|\,dt
\right]<\infty
\end{align}
and 
\begin{align}\label{a01}
\smallU_i(s,x)=\E\!\left[\funcG_i\left(x+\fwpr_{T-s}\right)+\int_s^T \left(\funcF_i(\smallU_i)\right)\left(t,x+\fwpr_{t-s}\right)\,dt\right],
\end{align} 
let
$  \Theta = \bigcup_{ n \in \N } \Z^n$,
let $\unif^\theta\colon \Omega\to[0,1]$, $\theta\in \Theta$, be independent random variables which are uniformly distributed on $[0,1]$, 
let $\uniform^\theta\colon [0,T]\times \Omega\to [0, T]$, $\theta\in\Theta$, satisfy 
for all $t\in [0,T]$, $\theta\in \Theta$ that 
$\uniform^\theta _t = t+ (T-t)\unif^\theta$,
let $\sppr^\theta\colon [0,T]\times\Omega\to\R^d $, $\theta\in \Theta$, be independent
standard Brownian motions,
assume that $(\unif^\theta)_{\theta\in \Theta}$, $(\sppr^\theta)_{\theta\in \Theta}$, and $\fwpr$ are independent,
and
let
$ 
  {\bigU}_{ n,M}^{\theta } \colon [0, T] \times \R^d \times \Omega \to \R
$, $n,M\in\Z$, $\theta\in\Theta$, be functions
which satisfy
for all $n,M \in \N$, $\theta\in\Theta $, 
$ t \in [0,T]$, $x\in\R^d $
that ${\bigU}_{-1,M}^{\theta}(t,x)={\bigU}_{0,M}^{\theta}(t,x)=0$ and 
\begin{equation}
\begin{split}\label{t27}
  &{\bigU}_{n,M}^{\theta}(t,x)
=
  \frac{1}{M^n}
 \sum_{i=1}^{M^n} 
      \funcG_2\left(x+\sppr^{(\theta,0,-i)}_{T}-\sppr^{(\theta,0,-i)}_{t}\right)
 \\
  +&
  \sum_{l=0}^{n-1} \frac{(T-t)}{M^{n-l}}
    \left[\sum_{i=1}^{M^{n-l}}
      \left(\funcF_2\big({\bigU}_{l,M}^{(\theta,l,i)}\big)-\1_{\N}(l)\funcF_2\big( {\bigU}_{l-1,M}^{(\theta,-l,i)}\big)\right)
      \left(\uniform_t^{(\theta,l,i)},x+\sppr_{\uniform_t^{(\theta,l,i)}}^{(\theta,l,i)}-\sppr_{t}^{(\theta,l,i)}\right)
    \right].
\end{split}
\end{equation}
\end{setting}

\subsection{An a priori estimate for solutions of partial differential equations (PDEs)}

\begin{lemma}[$q$-th moment of the exact solution]\label{m03d}  Assume \cref{t00} and let 
 $x\in\R^d$, $i\in \{1,2\}$. Then it holds 
that
\begin{equation}
\begin{split}
\sup_{t\in [0,T]}
  \left(\bigg.
    \Exp{\Big.\!\left | \smallU_i(t,x+\fwpr_{t})  \right|^q    }
  \right)^{\!\!\nicefrac{1}{q}}
  \leq
  e^{LT} (T+1)\boundFG
  \left[
  \sup_{t\in [0,T]}
     \left( 
          \Exp{\Big.\!
             \left(1+\normRd{x+\fwpr_t}\Big.\right)^{pq}\bigg.\!
          }\Bigg.\!
    \right)^{ \! \nicefrac{1}{q}} \right]  .
\end{split}\label{m09}
\end{equation}
\end{lemma}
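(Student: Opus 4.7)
The plan is to reduce the claim to a backward Gronwall argument applied to $\psi(s) := (\Exp{|\smallU_i(s, x+\fwpr_s)|^q})^{1/q}$. The starting point would be the Feynman--Kac representation \cref{a01} combined with the Markov property of $\fwpr$: setting $\fwpr'_r := \fwpr_{s+r} - \fwpr_s$ yields a Brownian motion independent of $\F_s := \sigma(\fwpr_r : r \le s)$, and since $\smallU_i$ is deterministic, \cref{a01} holds with $\fwpr$ replaced by the equidistributed $\fwpr'$. Substituting $y = x + \fwpr_s$ and using the identity $\fwpr_s + \fwpr'_{r-s} = \fwpr_r$ then produces the martingale-type representation
\begin{equation*}
\smallU_i(s, x + \fwpr_s)
= \E\!\left[\funcG_i(x + \fwpr_T) + \int_s^T \funcSmallF_i\bigl(t, x + \fwpr_t, \smallU_i(t, x + \fwpr_t)\bigr)\,dt \,\middle|\, \F_s\right].
\end{equation*}

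I would next apply conditional Jensen, take outer expectation, and invoke Minkowski's integral inequality to obtain
\begin{equation*}
\psi(s) \le \bigl(\Exp{|\funcG_i(x + \fwpr_T)|^q}\bigr)^{\!1/q} + \int_s^T \bigl(\Exp{|\funcSmallF_i(t, x+\fwpr_t, \smallU_i(t, x + \fwpr_t))|^q}\bigr)^{\!1/q}\,dt.
\end{equation*}
The Lipschitz bound \cref{t01} combined with \cref{m01} yields $|\funcSmallF_i(t,y,v)| \le \boundFG(1+\normRd{y})^p + L|v|$, while \cref{m01} directly gives $|\funcG_i(y)| \le \boundFG(1+\normRd{y})^p$. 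Setting $H := \sup_{t \in [0,T]} (\Exp{(1+\normRd{x+\fwpr_t})^{pq}})^{\!1/q}$ (finite because $\fwpr$ has all polynomial moments), these estimates would combine to
\begin{equation*}
\psi(s) \le \boundFG H + \int_s^T \bigl(\boundFG H + L\psi(t)\bigr)\,dt \le \boundFG H(T+1) + L\int_s^T \psi(t)\,dt.
\end{equation*}

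A backward Gronwall inequality on $[0,T]$ would then deliver $\psi(s) \le \boundFG H (T+1) e^{L(T-s)} \le e^{LT}(T+1)\boundFG H$, matching \cref{m09} after taking the supremum over $s$. The hard part I expect is verifying that $\psi$ is \emph{a priori} finite on $[0,T]$, so that Gronwall genuinely applies; the standing hypothesis immediately before \cref{a01} only furnishes $L^1$-integrability. A clean workaround is either to first derive polynomial growth of $\smallU_i$ itself from \cref{a01} (exploiting the linear bound on $\funcSmallF_i$ in its last argument together with Brownian polynomial moments), or to truncate $\smallU_i$ by $\min(|\smallU_i|, n)$, run the same Gronwall estimate at each level $n$, and pass to the limit via monotone convergence. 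Once this is settled, the remaining steps are standard $L^q$-estimates.
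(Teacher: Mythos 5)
Your proposal follows essentially the same route as the paper: the Feynman--Kac representation \eqref{a01} transported to the point $x+\fwpr_s$ via independent and stationary increments, Jensen/Minkowski, the splitting $|\funcSmallF_i(t,y,v)|\le\boundFG(1+\normRd{y})^p+L|v|$, and a backward Gronwall argument. The a priori finiteness issue you correctly flag is resolved in the paper exactly by your first workaround -- polynomial growth of $\smallU_i$, quoted from \cite[Corollary 3.11]{HJK+18} -- so no truncation is needed.
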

\begin{proof}[Proof of \cref{m03d}]
Throughout this proof 
let $\mu_{t} \colon \mathcal{B}(\R^d) \to [0,1]$, $t \in [0,T]$ be the probability measures which satisfy 
for all $t \in [0,T]$, $B \in \mathcal{B}(\R^d)$ that
\begin{equation}
\label{upper_exact:setting1}
  \mu_t(B) 
=
  \P( x +  \fwpr_t \in B ).
\end{equation}
The integral transformation theorem, \eqref{a01}, and the triangle inequality
show 
for all $t \in [0,T]$  that 
\begin{equation}
\label{upper_exact:eq1}
\begin{split}
  &\left(\bigg. \Exp{\Big. | \smallU_i(t,x +\fwpr_t)|^q } \right)^{ \!\nicefrac{1}{q}}
=
  \left( 
    \int_{\R^d}
      | \smallU_i(t, z)|^q
    \, \mu_t(dz)
  \right)^{ \! \nicefrac{1}{q}} \\
&=
  \left( 
    \int_{\R^d}
      \left| 
        \Exp{
          \funcG_i(z+\fwpr_{T-t}) 
          +
          \int_t^{T}
            (\funcF_i(\smallU_i))(s,z+\fwpr_{s-t})
          \,ds
        }
      \right|^q
    \, \mu_t(dz)
  \right)^{ \! \! \nicefrac{1}{q}} \\
&\leq
  \left( 
    \int_{\R^d}\left|
        \Exp{\big.
            \funcG_i(z+\fwpr_{T-t}) 
        }\right|^q
    \, \mu_t(dz)
  \right)^{ \! \nicefrac{1}{q}} \\
 &\qquad +\int_{t}^T
  \left(
\int_{\R^d}\left| 
       \Exp{  (\funcF_i(\smallU_i))(s,z+\fwpr_{s-t})\big.
                  }      \right|^q\Bigg.\!
    \, \mu_t(dz)\right)^{\!\!\nicefrac{1}{q}} ds
.
\end{split}
\end{equation}
Next,  Jensen's inequality, Fubini's theorem, \eqref{upper_exact:setting1}, the fact that $\fwpr$ has independent and stationary increments, 
and \eqref{m01} demonstrate that 
for all $t \in [0,T]$ it holds that
\begin{equation}
\label{upper_exact:eq3}
\begin{split}
&  \int_{\R^d}\left|
        \Exp{\big.
            \funcG_i(z+\fwpr_{T-t}) 
        }\right|^q\, \mu_t(dz)
\leq    \int_{\R^d}
        \Exp{
          \left| \funcG_i(z+\fwpr_{T}-\fwpr_t)  \right|^q\Big.\!
        }
    \, \mu_t(dz)\\
&
=
        \Exp{
          \left|  \funcG_i\left( x +\fwpr_t+ \fwpr_{T}- \fwpr_t\right)  \right|^q\Big.\!
        }=
        \Exp{
          \left|  \funcG_i\left( x +\fwpr_{T}\right)  \right|^q\Big.\!
        }
\leq \Exp{\boundFG^q\left(1+
          \normRd{  x + \fwpr_{T} }\Big.\!\right)^{pq}\bigg.\!
        }.
\end{split}
\end{equation} 
Furthermore, Jensen's inequality, Fubini's theorem, \eqref{upper_exact:setting1},  the fact that $\fwpr$ has independent and stationary increments, the triangle inequality, \eqref{t01}, and \eqref{m01} demonstrate  
for all $t \in [0,T]$  that
\begin{align} \begin{split}
&\int_{t}^T\left(
\int_{\R^d}\left| 
       \Exp{   \big.\!       
            (\funcF_i(\smallU_i))(s,z+\fwpr_{s-t})           
        }      \right|^q
    \, \mu_t(dz)\right)^{\!\!\nicefrac{1}{q}}ds\\
& \leq 
\int_{t}^T
\left(
\int_{\R^d}
       \Exp{\left| \big.
            (\funcF_i(\smallU_i))(s,z+\fwpr_{s}-\fwpr_{t})
			\right|^q\Big.\!
        }      
    \, \mu_t(dz)\right)^{\!\!\nicefrac{1}{q}} \, ds
\\
&=
\int_{t}^T
\left(\bigg.\!
       \Exp{\left| \big.
            \left(\funcF_i(\smallU_i)\right)(s,x+\fwpr_{t}+\fwpr_{s}-\fwpr_{t})
			\right|^q\Big.\!
        }      \right)^{\!\!\nicefrac{1}{q}}\! ds
\\
&\leq
   \int_{t}^T
     \left( 
          \Exp{\Big.\!
             \left|  (\funcF_i(0))(s, x+\fwpr_{s})  \right|^q
          }\Bigg.\!
    \right)^{ \!\! \nicefrac{1}{q}} \!
  ds
  + 
  \int_{t}^T
     \left( 
          \Exp{\Big.\!
              \left| (\funcF_i(\smallU_i) - \funcF_i(0))(s,x+\fwpr_{s})  \right|^q
          }\Bigg.\!
    \right)^{ \! \nicefrac{1}{q}} 
  \, ds \\
&\leq
  T\sup_{s\in [0,T]}
     \left( 
          \Exp{\Big.\!
             \boundFG^q\left(1+\normRd{x+\fwpr_s}\Big.\right)^{pq}\bigg.\!
          }\Bigg.\!
    \right)^{ \! \nicefrac{1}{q}} 
  + 
  \int_{t}^T
     \left( 
          \Exp{\Big.
              L^q \left| \smallU_i(s,x+\fwpr_{s})  \right|^q
          }\bigg.\!
    \right)^{ \! \nicefrac{1}{q}} 
  \, ds.
\end{split}
\end{align}
Combining this with \eqref{upper_exact:eq1} and \eqref{upper_exact:eq3}
implies that
for all $t \in [0,T]$ it holds that
\begin{equation}
\label{upper_exact:eq8}
\begin{split}
&\left(\bigg. \Exp{\Big.\! \left| \big.\smallU_i(t,x +\fwpr_t)\right|^q } \right)^{\! \!\nicefrac{1}{q}} \\
&\leq
  (T+1)\boundFG\sup_{s\in [0,T]}
     \left( 
          \Exp{\Big.\!
             \left(1+\normRd{x+\fwpr_s}\Big.\right)^{pq}\bigg.\!
          }\Bigg.\!
    \right)^{ \! \nicefrac{1}{q}} 
  + \LipConstF
  \int_{t}^T
     \left( 
          \Exp{\Big.
              \left| \smallU_i(s,x+\fwpr_{s})  \right|^q
          }\bigg.\!
    \right)^{ \! \nicefrac{1}{q}} 
  \, ds.
\end{split}
\end{equation}
Next,
\cite[Corollary 3.11]{HJK+18}
 shows that
\begin{align}
\sup_{s\in [0,T]}\sup_{y\in\R^d} \frac{\left|\smallU_i(s,y)\right|}{\left(1+\normRd{y}\right)^p}\leq 
\sup_{s\in [0,T]}\sup_{y\in\R^d} \frac{\left|\smallU_i(s,y)\right|}{1+\normRd{y}^p}<
\infty.
\end{align}
This, the triangle inequality, 
and the fact that $ \E\!\left[\normRd{\fwpr_T}^{pq}\right]<\infty$ show that 
\begin{align}\label{m03e}\begin{split}
&\int_0^T \left( 
          \Exp{\Big.\!
              \left| \smallU_i(s,x+\fwpr_{s})  \right|^q
          }\bigg.\!
    \right)^{ \!\! \nicefrac{1}{q}} ds\leq \left[\sup_{s\in [0,T]}\sup_{y\in\R^d} \frac{|u(s,y)|}{\left(1+\normRd{y}\right)^p}\right]
\int_{0}^{T}\left(\E\!\left[\left(1+\normRd{x+\fwpr_s}\Big.\!\right)^{pq}\bigg.\!\right]\Bigg.\!\right)^{\!\!\nicefrac{1}{q}}ds\\
&\leq 
\left[\sup_{s\in [0,T]} \sup_{y\in\R^d}\frac{|u(s,y)|}{\left(1+\normRd{y}\right)^p}\right]T
\left(1+\normRd{x}+
\left(\Exp{           \normRd{\fwpr_T}^{pq}\Big.\! }\bigg.\!\right)^{\!\!\frac{1}{pq}}\Bigg.\!\right)^{p}
<\infty.
\end{split}
\end{align}
This,
Gronwall's integral inequality, and \eqref{upper_exact:eq8} establish 
for all $t \in [0, T]$
that
\begin{equation}
\begin{split}
  \left(\bigg.
    \Exp{\Big.\!\left | \smallU_i(t,x+\fwpr_{t})  \right|^q    }
  \right)^{\!\!\nicefrac{1}{q}}\leq
  e^{LT} (T+1)\boundFG\sup_{s\in [0,T]}
     \left( 
          \Exp{\Big.\!
             \left(1+\normRd{x+\fwpr_s}\Big.\right)^{pq}\bigg.\!
          }\Bigg.\!
    \right)^{ \!\! \nicefrac{1}{q}} .
\end{split}
\end{equation}
The proof of \cref{m03d} is thus completed.
\end{proof}

\subsection{A stability result for solutions of PDEs}
\begin{lemma}\label{m04}Assume \cref{t00}.
Then it holds
for all $t\in [0,T]$, $x\in \R^d$  that
\begin{align}\begin{split}
&\E\!\left[\Big.\!\left|\smallU_1(t,x+\fwpr_t)-\smallU_2(t,x+\fwpr_t)\right|\right]\\&\leq\delta 
 \left(e^{LT} (T+1)\right)^{q+1}\left(\boundFG^q+1\right)
         \left(1+\normRd{x}+
\left(\Exp{           \normRd{\fwpr_T}^{pq}\Big.\! }\bigg.\!\right)^{\!\!\frac{1}{pq}}\right)^{pq}.
\end{split}
\end{align}
\end{lemma}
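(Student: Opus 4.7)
The plan is to derive a backward Gronwall-type integral inequality for $\Delta(t) := \E[|u_1(t,x+\fwpr_t) - u_2(t,x+\fwpr_t)|]$ from the fixed-point identity \eqref{a01}, and to absorb the $|u_2|^q$ contribution arising from the perturbation assumption \eqref{m02} by means of the a priori estimate in \cref{m03d}.

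First, I would apply \eqref{a01} to $u_1$ and $u_2$ with spatial argument $y = x+\fwpr_t$, using an independent copy of $\fwpr$ (equivalently, conditioning on the $\sigma$-algebra generated by $\fwpr_t$) to interpret the right-hand side. Taking the difference, pulling absolute values inside the expectations, and exploiting that $\fwpr$ has independent and stationary increments, one obtains
\[
\Delta(t) \leq \E[|g_1(x+\fwpr_T) - g_2(x+\fwpr_T)|] + \int_t^T \E[|(F_1(u_1) - F_2(u_2))(s,x+\fwpr_s)|]\, ds.
\]

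Next, I would decompose $F_1(u_1) - F_2(u_2) = (F_1(u_1) - F_1(u_2)) + (F_1(u_2) - F_2(u_2))$, bound the first summand by $L\,|u_1-u_2|$ via \eqref{t01}, and estimate the second summand together with $|g_1-g_2|$ by the perturbation bound \eqref{m02} (with $v = u_2(s,x+\fwpr_s)$ and $v=0$, respectively). Writing $\phi := \sup_{s\in[0,T]} \E[(1+\normRd{x+\fwpr_s})^{pq}]$ and invoking \cref{m03d} applied to $u_2$ to control $\E[|u_2(s,x+\fwpr_s)|^q] \leq (e^{LT}(T+1)\boundFG)^q \phi$, this yields
\[
\Delta(t) \leq \delta\phi(T+1)\bigl(1 + (e^{LT}(T+1)\boundFG)^q\bigr) + L\int_t^T \Delta(s)\, ds.
\]

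The finiteness and integrability of $\Delta$ on $[0,T]$ follow from \cref{m03d} in the same way as in \eqref{m03e}, so Gronwall's inequality (applied after time reversal) produces $\Delta(t) \leq \delta\phi(T+1)(1 + (e^{LT}(T+1)\boundFG)^q)e^{LT}$. Using the crude inequality $1 + (e^{LT}(T+1)\boundFG)^q \leq (e^{LT}(T+1))^q(\boundFG^q+1)$ and combining with the remaining factor $(T+1)e^{LT}$ collapses the constants to $(e^{LT}(T+1))^{q+1}(\boundFG^q+1)$. Finally, Minkowski's inequality together with the Brownian scaling identity $\E[\normRd{\fwpr_s}^{pq}] = s^{pq/2}\E[\normRd{\fwpr_1}^{pq}] \leq \E[\normRd{\fwpr_T}^{pq}]$ yields $\phi \leq (1+\normRd{x}+(\E[\normRd{\fwpr_T}^{pq}])^{1/(pq)})^{pq}$, completing the argument. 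The main obstacle is the careful bookkeeping required to collapse the constants into the compact final form; the conceptually delicate step of using an independent copy of the Brownian motion to evaluate $u_i(t, x+\fwpr_t)$ via \eqref{a01} is standard.
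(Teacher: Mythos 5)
Your proposal is correct and follows essentially the same route as the paper: the fixed-point identity \eqref{a01} evaluated at $z=x+\fwpr_t$ via independence and stationarity of increments, the decomposition $\funcF_1(\smallU_1)-\funcF_2(\smallU_2)=(\funcF_1(\smallU_1)-\funcF_1(\smallU_2))+(\funcF_1(\smallU_2)-\funcF_2(\smallU_2))$ handled by \eqref{t01} and \eqref{m02}, the $q$-th moment bound from \cref{m03d} to absorb the $|\smallU_2|^q$ term, and a backward Gronwall argument. The only difference is cosmetic ordering — you bound the inhomogeneous term before applying Gronwall, whereas the paper applies Gronwall first — and your constant bookkeeping collapses to the same final expression.
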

\begin{proof}[Proof of \cref{m04}]
First, \eqref{a01}, the triangle inequality, and the fact that
$\fwpr$ has stationary increments
 show  for all $s\in [0,T]$, $z\in\R^d$  that
\begin{align}\begin{split}
& \left|\smallU_1(s,z)-\smallU_2(s,z)\right|
\\
&=\left|\E\!\left[(\funcG_1-\funcG_2)\left(z+\fwpr_{T-s}\right)+\int_s^T \big(
\funcF_1(\smallU_1)-\funcF_1(\smallU_2)+
\funcF_1(\smallU_2)- \funcF_2(\smallU_2)
\big)\left(t,z+\fwpr_{t-s}\right)\,dt\right]\right|\\
&\leq 
\E\!\left[\Big.\!\left|\big.(\funcG_1-\funcG_2)\left(z+\fwpr_{T}-\fwpr_{s}\right)\right|\right]
+\int_s^T\E\!\left[\left| \big(\funcF_1(\smallU_1)-\funcF_1(\smallU_2)\big)\left(t,z+\fwpr_{t}-\fwpr_{s}\right)\right|\Big.\!\right]\,dt\\
&\qquad\qquad\qquad+\int_s^T\E\!\left[\left| \big(\funcF_1(\smallU_2)- \funcF_2(\smallU_2)\big)\left(t,z+\fwpr_{t}-\fwpr_{s}\right)\right|\bigg.\!\right]\,dt.
\end{split}
\end{align}
This, Fubini's theorem, the fact that $\fwpr$ has independent increments, and the Lipschitz condition in \eqref{t01} ensure that for all $s\in [0,T]$, $x\in \R^d$ it holds that
\begin{align}
\begin{split}
&\E\!\left[\Big.\!\left|\big.\!\left(\smallU_1-\smallU_2\right)(s,x+\fwpr_s)\right|\right]= \E\!\left[\Big.\!\left.\left|\big.\smallU_1(s,z)-\smallU_2(s,z)\right|\right|_{z=x+\fwpr_s}\right]
\\
&\leq \E\!\left[
\E\!\left[\left.\Big.\!\left|\big.(\funcG_1-\funcG_2)\left(z+\fwpr_{T}-\fwpr_{s}\right)\right|\right]\right|_{z=x+\fwpr_s}\right]\\
&\qquad\qquad
+\int_s^T\E\!\left[\E\!\left[\left.\left| \big(\funcF_1(\smallU_1)-\funcF_1(\smallU_2)\big)\left(t,z+\fwpr_{t}-\fwpr_{s}\right)\right|\Big.\!\right]\right|_{z=x+\fwpr_s}\right]\,dt\\&\qquad\qquad+\int_s^T\E\!\left[\E\!\left[\left.\left| \big(\funcF_1(\smallU_2)- \funcF_2(\smallU_2)\big)\left(t,z+\fwpr_{t}-\fwpr_{s}\right)\right|\bigg.\!\right]\right|_{z=x+\fwpr_s}\right]\,dt\\
&= \E\!\left[\Big.\!\left|\big.(\funcG_1-\funcG_2)\left(x+\fwpr_{T}\right)\right|\right]
+\int_s^T\E\!\left[\left| \big(\funcF_1(\smallU_1)-\funcF_1(\smallU_2)\big)\left(t,x+\fwpr_{t}\right)\right|\Big.\!\right]\,dt\\
&\qquad\qquad+\int_s^T\E\!\left[\left| \big(\funcF_1(\smallU_2)- \funcF_2(\smallU_2)\big)\left(t,x+\fwpr_{t}\right)\right|\bigg.\!\right]\,dt\\
&\leq \E\!\left[\Big.\!\left|(\funcG_1-\funcG_2)\left(x+\fwpr_{T}\right)\big.\!\right|\right]
+ \int_s^T\E\!\left[\LipConstF\left| \big(\smallU_1- \smallU_2\big)\left(t,x+\fwpr_{t}\right)\right|\Big.\!\right]\,dt\\
&\qquad\qquad+
T\sup_{t\in [0,T]}\E\!\left[\left| \big(\funcF_1(\smallU_2)- \funcF_2(\smallU_2)\big)\left(t,x+\fwpr_{t}\right)\right|\Big.\!\right].
\end{split}
\end{align}
This, Gronwall's lemma, and \cref{m03d} yield  for all $x\in \R^d$ that
\begin{align}\small\begin{split}
&\sup_{t\in [0,T]}\E\!\left[\Big.\!\left|\big.\!\left(\smallU_1-\smallU_2\right)(t,x+\fwpr_t)\right|\right]\\
&\leq e^{\LipConstF T}(T+1)
\sup_{t\in [0,T]}\max\left\{\E\!\left[\Big.\!\left|\big.(\funcG_1-\funcG_2)\left(x+\fwpr_{T}\right)\right|\right],
\E\!\left[\left| \big(\funcF_1(\smallU_2)- \funcF_2(\smallU_2)\big)\left(t,x+\fwpr_{t}\right)\right|\Big.\!\right]\right\}.
\end{split}\label{m03}
\end{align}
Furthermore, \eqref{m02},  the triangle inequality, and \cref{m03d}
 imply  for all  $x\in \R^d$  that
\begin{align} \begin{split}
&\sup_{t\in [0,T]}\max\left\{\E\!\left[\Big.\!\left|\big.(\funcG_1-\funcG_2)\left(x+\fwpr_{T}\right)\right|\right],
\E\!\left[\left| \big(\funcF_1(\smallU_2)- \funcF_2(\smallU_2)\big)\left(t,x+\fwpr_{t}\right)\right|\Big.\!\right]\right\}
\\
&\leq \delta\sup_{t\in [0,T]}
\E\!\left[\left(\Big.1+\normRd{x+\fwpr_t}\right)^{pq}+\left|u_2(x+\fwpr_t)\right|^q\Big.\!\right]\\
&\leq \delta\sup_{t\in [0,T]}
\E\!\left[\bigg.\!\left(\Big.1+\normRd{x+\fwpr_t}\right)^{pq}\right]
+\delta\sup_{t\in [0,T]}\E\!\left[\big.\!\left|u_2(x+\fwpr_t)\right|^q\Big.\right]
.\\
&\leq \delta\sup_{t\in [0,T]}
\E\!\left[\bigg.\!\left(\Big.1+\normRd{x+\fwpr_t}\right)^{pq}\right]+\delta
 (e^{LT} (T+1)\boundFG)^q\sup_{t\in [0,T]}
          \Exp{\Big.\!
             \left(1+\normRd{x+\fwpr_t}\Big.\!\right)^{pq}\bigg.\!
          }\\
&\leq\delta 
 \left(e^{LT} (T+1)\right)^q (\boundFG^q+1)\sup_{t\in [0,T]}
          \Exp{\Big.\!
             \left(1+\normRd{x+\fwpr_t}\Big.\!\right)^{pq}\bigg.\!
          }.
\end{split}
\end{align}
This, \eqref{m03}, and the triangle inequality  yield that
\begin{align}\begin{split}
&\sup_{t\in [0,T]}\E\!\left[\Big.\!\left|\big.\!\left(\smallU_1-\smallU_2\right)(t,x+\fwpr_t)\right|\right]\\
&\leq\delta 
 \left(e^{LT} (T+1)\right)^{q+1}\left(\boundFG^q+1\right)\sup_{t\in [0,T]}
          \Exp{\Big.\!
             \left(1+\normRd{x+\fwpr_t}\Big.\!\right)^{pq}\bigg.\!
          }
\\
&\leq\delta 
 \left(e^{LT} (T+1)\right)^{q+1}\left(\boundFG^q+1\right)
         \left(1+\normRd{x}+
\left(\Exp{           \normRd{\fwpr_T}^{pq}\Big.\! }\bigg.\!\right)^{\!\!\frac{1}{pq}}\right)^{pq}.
\end{split}\label{m10}
\end{align}
This
completes the proof of \cref{m04}.
\end{proof}


\subsection{A stability result for MLP approximations}
\begin{corollary}\label{m06}Assume \cref{t00}, let $x\in\R^d$,  $N,M\in \N$, and assume that $q\geq 2$.
Then it holds that
\begin{align}\begin{split}
&\left(\bigg.\!\E\!\left[\left|U^0_{N,M}(0,x)-\smallU_1(0,x)\right|^2\right]\right)^{\!\!\nicefrac{1}{2}}\\
&\leq 
  \left(e^{LT} (T+1)\right)^{q+1}\left(\boundFG^q+1\right)\left(\delta+\frac{e^{M/2}(1+2LT)^{N}}{M^{N/2}}\right)
  \left(1+\normRd{x}+
\left(\Exp{           \normRd{\fwpr_T}^{pq}\Big.\! }\bigg.\!\right)^{\!\!\frac{1}{pq}}\right)^{pq}.
\end{split}
\end{align}
\end{corollary}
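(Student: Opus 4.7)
The natural strategy is a triangle-inequality split
\begin{equation*}
  \bigl(\E[|U^0_{N,M}(0,x)-u_1(0,x)|^2]\bigr)^{1/2}
  \le
  \bigl(\E[|U^0_{N,M}(0,x)-u_2(0,x)|^2]\bigr)^{1/2}
  + |u_2(0,x)-u_1(0,x)|,
\end{equation*}
where the second summand is deterministic because $u_1,u_2$ are (non-random) PDE solutions. The first summand measures the error of the MLP scheme built from $(f_2,g_2)$ against its target $u_2$; the second measures PDE stability under the perturbation from $(f_1,g_1)$ to $(f_2,g_2)$ and is exactly what \cref{m04} controls.

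First I would handle the PDE-stability term. Apply \cref{m04} at $t=0$ and $x\in\R^d$; since $W_0\equiv 0$, this yields directly
\begin{equation*}
  |u_1(0,x)-u_2(0,x)|
  \le \delta\,(e^{LT}(T+1))^{q+1}(B^q+1)
      \Bigl(1+\|x\|+(\E[\|W_T\|^{pq}])^{1/(pq)}\Bigr)^{pq}.
\end{equation*}
This already matches the $\delta$-part of the claim, with the prefactor and the polynomial growth factor in the exact form stated.

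Next I would invoke the known $L^2$ error bound for MLP approximations of semilinear heat equations driven by $(f_2,g_2)$, as established in the cited MLP references (in particular in \cite{HJK+18}, where exactly the rate $M^{-N/2}e^{M/2}(1+2LT)^N$ appears). The polynomial bounds in \eqref{m01} supply the constants: since $|f_2(t,x,0)|\le B(1+\|x\|)^p$ and $|g_2(x)|\le B(1+\|x\|)^p$, combined with the Lipschitz condition \eqref{t01} and \cref{m03d} applied to $u_2$ (which controls moments of $u_2$ along $x+W_\cdot$ in terms of $B$ and $(1+\|x\|+(\E[\|W_T\|^{pq}])^{1/(pq)})^{p}$), the standard MLP estimate gives
\begin{equation*}
  \bigl(\E[|U^0_{N,M}(0,x)-u_2(0,x)|^2]\bigr)^{1/2}
  \le (e^{LT}(T+1))^{q+1}(B^q+1)\,
      \frac{e^{M/2}(1+2LT)^{N}}{M^{N/2}}\,
      \Bigl(1+\|x\|+(\E[\|W_T\|^{pq}])^{1/(pq)}\Bigr)^{pq}.
\end{equation*}
Adding the two bounds and factoring out $(\delta+M^{-N/2}e^{M/2}(1+2LT)^N)$ yields the asserted inequality.

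The main obstacle is the second step: one must make sure the MLP error bound is invoked with constants that agree with the prefactor $(e^{LT}(T+1))^{q+1}(B^q+1)$ and the polynomial growth factor stated in the claim. This is essentially bookkeeping — combining the MLP $L^2$-recursion from \cite{HJK+18} with the moment control from \cref{m03d} and the growth bound \eqref{m01}. Everything else (the triangle inequality, applying \cref{m04}, and the independence structure ensuring the MLP scheme $U^0_{N,M}$ converges to $u_2$) is routine.
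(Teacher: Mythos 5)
Your proposal is correct and follows essentially the same route as the paper: a triangle-inequality split at $u_2$, \cref{m04} for the deterministic perturbation term, the MLP $L^2$-error bound of \cite[Theorem~3.5]{HJK+18} (whose integrability hypothesis is supplied by \cref{m03d}) for the stochastic term, and then constant bookkeeping (using $B\le B^q+1$, $q\ge 2$, and Jensen's inequality) to merge both bounds under the common prefactor $(e^{LT}(T+1))^{q+1}(B^q+1)$ and growth factor $(1+\|x\|+(\E[\|\fwpr_T\|^{pq}])^{1/(pq)})^{pq}$. The only cosmetic difference is that the paper first records the MLP bound in its naturally weaker form $e^{LT}(T+1)B(\cdot)^{p}$ before enlarging it, whereas you state the enlarged form directly and defer the verification to the bookkeeping step you correctly identify.
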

\begin{proof}[Proof of \cref{m06}]
First, \cref{m03d} implies that
$\int_{0}^{T}
\left(
    \Exp{\big.\!\left | \smallU_i(t,x+\fwpr_{t})  \right|^2    }
  \right)^{\!\!\nicefrac{1}{2}}dt<\infty$. 
This, \cite[Theorem~3.5]{HJK+18} 
(with $\xi=x$,  $F=\funcF_2$, $g=\funcG_2$, and $u=\smallU_2$ in the notation of \cite[Theorem~3.5]{HJK+18}),  \eqref{m01}, and the triangle inequality
 ensure  that
\begin{align}\begin{split}
&\left(\bigg.\!\E\!\left[\left|U^0_{N,M}(0,x)-\smallU_2(0,x)\right|^2\right]\right)^{\!\!\nicefrac{1}{2}}\\
&\leq
  e^{\LipConstF T}
  \left[\left(\E\!\left[\Big.\!\left|\big.\funcG_2(x+\fwpr_T)\right|^2\right]\bigg.\!\right)^{\!\!\nicefrac{1}{2}}
+T\left(\frac{1}{T}\int_0^T\E\!\left[\Big.\!\left|(\funcF_2(0))(t,x+\fwpr_t)\right|^2\right]dt\Bigg.\! \right)^{\!\!\nicefrac{1}{2}} \right]
  \frac{e^{M/2}(1+2LT)^{N}}{M^{N/2}}\\
&\leq e^{\LipConstF T} (T+1) \sup_{t\in [0,T]}
\left(
\E\!\left[\boundFG^2 \left(1+\normRd{x+\fwpr_t}\Big.\!\right)^{2p}\right]\Bigg.\!\right)^{\!\!\nicefrac{1}{2}}  \frac{e^{M/2}(1+2LT)^{N}}{M^{N/2}}
\\
&\leq e^{\LipConstF T} (T+1)\boundFG 
\left(1+\normRd{x}+
\left(\Exp{           \normRd{\fwpr_T}^{2p}\Big.\! }\right)^{\!\!\frac{1}{2p}}\Bigg.\!\right)^{p} \frac{e^{M/2}(1+2LT)^{N}}{M^{N/2}}.
\end{split}\label{m11}
\end{align}
Furthermore, \cref{m04} shows that
\begin{align}\begin{split}
\left|u_2(0,x)-u_1(0,x)\right|
&\leq
\delta 
 \left(e^{LT} (T+1)\right)^{q+1}\left(\boundFG^q+1\right)
         \left(1+\normRd{x}+
\left(\Exp{           \normRd{\fwpr_T}^{pq}\Big.\! }\bigg.\!\right)^{\!\!\frac{1}{pq}}\right)^{pq}.
\end{split}
\end{align}
This, the triangle inequality, \eqref{m11}, the fact that
$B\leq B^q+1$,
the assumption that $q\geq 2$, and
Jensen's inequality show that
\begin{align}\begin{split}
&\left(\bigg.\!\E\!\left[\left|U^0_{N,M}(0,x)-\smallU_1(0,x)\right|^2\right]\right)^{\!\!\nicefrac{1}{2}}\\
&\leq 
\left(\bigg.\!\E\!\left[\left|U^0_{N,M}(0,x)-\smallU_2(0,x)\right|^2\right]\right)^{\!\!\nicefrac{1}{2}}+\left|\big.u_2(0,x)-u_1(0,x)\right|\\
&\leq 
  \left(e^{LT} (T+1)\right)^{q+1}\left(\boundFG^q+1\right)\left(\delta+\frac{e^{M/2}(1+2LT)^{N}}{M^{N/2}}\right)\!\!
  \left(1+\normRd{x}+
\left(\Exp{           \normRd{\fwpr_T}^{pq}\Big.\! }\bigg.\!\right)^{\!\!\frac{1}{pq}}\right)^{pq}.
\end{split}
\end{align}
The proof of \cref{m06} is thus completed.
\end{proof}

\section{Deep neural network representations for MLP approximations}\label{sec:Picard}
The main result of this section, \cref{b26} below, shows that multilevel Picard aproximations can be well represented by DNNs. The central tools for the proof of \cref{b26} are \cref{m11b,b01} which show that DNNs are stable under compositions and summations. We formulate \cref{m11b,b01} in terms of the operators defined in \eqref{b26c} below, whose properties are studied in Lemmas~\ref{d01},~\ref{d02},~and \ref{b15}.
\subsection{A mathematical framework for deep neural networks}
\begin{setting}[Artificial neural networks]\label{m07}
Let $\left \|\cdot \right\|, \supnorm{\cdot} \colon (\cup_{d\in \N} \R^d) \to [0,\infty)$ and 
$\cardna \colon (\cup_{d\in \N}\R^d) \to \N$
satisfy for all $d\in \N$, $x=(x_1,\ldots,x_d)\in \R^d$ that $\|x\|=\sqrt{\sum_{i=1}^d(x_i)^2}$, $\supnorm{x}=\max_{i\in [1,d]\cap \N}|x_i|$, and
$\card{x}=d$,
let
 $\funcBoldA{d}\colon \R^d\to\R^d $, $d\in \N$, satisfy for all $d\in\N$, $x=(x_1,\ldots,x_d)\in \R^d$ that 
\begin{align}
\funcBoldA{d}(x)= \left(\max\{x_1,0\},\ldots,\max\{x_d,0\}\right),
\end{align}
let $\setCalD=\cup_{H\in \N} \N^{H+2}$,
let
\begin{align}\begin{split}
\setCalN= \bigcup_{H\in  \N}\bigcup_{(k_0,k_1,\ldots,k_{H+1})\in \N^{H+2}}
\left[ \prod_{n=1}^{H+1} \left(\R^{k_{n}\times k_{n-1}} \times\R^{k_{n}}\right)\right],
\end{split}
\end{align}
let $\funcCalL\colon \setCalN\to\setCalD$ and
$\funcCalR\colon \setCalN\to (\cup_{k,l\in \N} C(\R^k,\R^l))$
satisfy
for all $H\in \N$, $k_0,k_1,\ldots,k_H,k_{H+1}\in \N$,
$
\Phi = ((W_1,B_1),\ldots,(W_{H+1},B_{H+1}))\in \prod_{n=1}^{H+1} \left(\R^{k_n\times k_{n-1}} \times\R^{k_n}\right), 
$
$x_0 \in \R^{k_0},\ldots,x_{H}\in \R^{k_{H}}$ with 
$\forall\, n\in \N\cap [1,H]\colon x_n = \funcBoldA{k_n}(W_n x_{n-1}+B_n )$ 
that
\begin{equation}\label{m07c}
\funcCalL(\Phi)= (k_0,k_1,\ldots,k_{H}, k_{H+1}),\qquad
\funcCalR(\Phi )\in C(\R^{k_0},\R ^ {k_{H+1}}),
\end{equation}
\begin{equation}\label{m07d}
\text{and}\qquad (\funcCalR(\Phi)) (x_0) = W_{H+1}x_{H}+B_{H+1},
\end{equation} 
let $\concat \colon \setCalD\times \setCalD \to\setCalD $ satisfy 
for all $H_1,H_2\in \N$, $ \alpha=(\alpha_0,\alpha_1,\ldots,\alpha_{H_1},\alpha_{H_1+1})\in\N^{H_1+2}$, $\beta=(\beta_0,\beta_1,\ldots,\beta_{H_2},\beta_{H_2+1})\in\N^{H_2+2}$
that
\begin{align}
\alpha\concat\beta= (\beta_{0},\beta_{1},\ldots,\beta_{H_2},\beta_{H_2+1}+\alpha_{0},\alpha_{1},\alpha_{2},\ldots,\alpha_{H_1+1})\in \N^{H_1+H_2+3},\label{b25}
\end{align}
let $\dimsum \colon \setCalD\times \setCalD \to\setCalD  $ satisfy
for all $H\in \N$, 
$\alpha= (\alpha_0,\alpha_1,\ldots,\alpha_{H},\alpha_{H+1})\in \N^{H+2}$,
$\beta= (\beta_0,\beta_1,\beta_2,\ldots,\beta_{H},\beta_{H+1})\in \N^{H+2}$
that
\begin{align}
\alpha \dimsum \beta =(\alpha_0,\alpha_1+\beta_1,\ldots,\alpha_{H}+\beta_{H},\beta_{H+1})\in \N^{H+2},\label{b26c}
\end{align} 
and let
$\neutralDim{n}\in \setCalD$, $n\in [3,\infty)\cap\N$, satisfy for all $n\in [3,\infty)\cap\N$ that
\begin{align}
 \neutralDim{n}= (1,\underbrace{2,\ldots,2}_{(n-2)\text{-times}},1)\in \N^{n}.\label{b26d}
\end{align}
\end{setting}
\begin{remark}
The set $\setCalN$ can be viewed as the set of all artificial neural networks. 
For each network $\Phi\in \setCalN$ the function 
$\funcCalR(\Phi)$ is the function represented by $\Phi$ and
the vector $\funcCalL(\Phi)$ describes the layer dimensions of $\Phi$. 
\end{remark}

\subsection{Properties of operations associated to deep neural networks}

\begin{lemma}[$\concat$ is associative]\label{d01}Assume \cref{m07} and let $\alpha,\beta,\gamma\in \setCalD$. Then it holds that
$(\alpha\concat\beta)\concat \gamma
= \alpha\concat(\beta\concat \gamma)$.
\end{lemma}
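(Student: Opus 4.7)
The claim is a statement about tuples of positive integers, so the plan is a direct combinatorial verification: parameterize $\alpha,\beta,\gamma$ by their lengths, apply the definition \eqref{b25} twice on each side of the claimed equality, and compare the two resulting tuples coordinate by coordinate. Concretely, I would fix $H_1,H_2,H_3\in\N$ and write $\alpha=(\alpha_0,\ldots,\alpha_{H_1+1})\in\N^{H_1+2}$, $\beta=(\beta_0,\ldots,\beta_{H_2+1})\in\N^{H_2+2}$, and $\gamma=(\gamma_0,\ldots,\gamma_{H_3+1})\in\N^{H_3+2}$.

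For the left-hand side, the first application of \eqref{b25} gives
\begin{equation*}
\alpha\concat\beta=(\beta_0,\beta_1,\ldots,\beta_{H_2},\beta_{H_2+1}+\alpha_0,\alpha_1,\ldots,\alpha_{H_1+1})\in\N^{H_1+H_2+3}.
\end{equation*}
The leftmost entry of $\alpha\concat\beta$ is $\beta_0$ and its rightmost entry is $\alpha_{H_1+1}$, so a second application of \eqref{b25} with $\alpha\concat\beta$ playing the role of $\alpha$ and $\gamma$ playing the role of $\beta$ then yields
\begin{equation*}
(\alpha\concat\beta)\concat\gamma=(\gamma_0,\ldots,\gamma_{H_3},\gamma_{H_3+1}+\beta_0,\beta_1,\ldots,\beta_{H_2},\beta_{H_2+1}+\alpha_0,\alpha_1,\ldots,\alpha_{H_1+1}).
\end{equation*}

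Symmetrically, for the right-hand side the first step produces
\begin{equation*}
\beta\concat\gamma=(\gamma_0,\ldots,\gamma_{H_3},\gamma_{H_3+1}+\beta_0,\beta_1,\ldots,\beta_{H_2+1})\in\N^{H_2+H_3+3},
\end{equation*}
whose leftmost entry is $\gamma_0$ and whose rightmost entry is $\beta_{H_2+1}$. A second application of \eqref{b25} with $\beta\concat\gamma$ playing the role of $\beta$ then gives
\begin{equation*}
\alpha\concat(\beta\concat\gamma)=(\gamma_0,\ldots,\gamma_{H_3},\gamma_{H_3+1}+\beta_0,\beta_1,\ldots,\beta_{H_2},\beta_{H_2+1}+\alpha_0,\alpha_1,\ldots,\alpha_{H_1+1}),
\end{equation*}
which is the same tuple as $(\alpha\concat\beta)\concat\gamma$.

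No analytic input is needed; the only potential pitfall is index bookkeeping in the second application of $\concat$. One must take care that in each case the correct endpoint entries of the already-concatenated tuple (namely $\beta_0$ and $\alpha_{H_1+1}$ for $\alpha\concat\beta$, and $\gamma_0$ and $\beta_{H_2+1}$ for $\beta\concat\gamma$) are identified with the roles of the first and last components in the template \eqref{b25}, and that the middle entry $\beta_{H_2+1}+\alpha_0$ (respectively $\gamma_{H_3+1}+\beta_0$) is carried through unchanged rather than being merged a second time. Once this is set up carefully, the proof reduces to reading off both tuples and observing they agree.
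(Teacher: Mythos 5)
Your proof is correct and is essentially identical to the paper's own argument: both fix the lengths $H_1,H_2,H_3$, apply the definition \eqref{b25} twice on each side, and observe that both sides equal the tuple $(\gamma_0,\ldots,\gamma_{H_3},\gamma_{H_3+1}+\beta_0,\beta_1,\ldots,\beta_{H_2},\beta_{H_2+1}+\alpha_0,\alpha_1,\ldots,\alpha_{H_1+1})$. The index bookkeeping you flag as the one pitfall is handled correctly.
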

\begin{proof}[Proof of \cref{d01}]Throughout this proof let 
$H_1,H_2,H_3\in \N$,
let $(\alpha_i)_{i\in [0,H_1+1]\cap\N_0}\in \N^{H_1+2}$,
$(\beta_i)_{i\in [0,H_2+1]\cap\N_0}\in \N^{H_2+2}$, 
$(\gamma_i)_{i\in [0,H_3+1]\cap\N_0}\in \N^{H_3+2}$ satisfy that
\begin{align}\begin{split}
\alpha&=(\alpha_0,\alpha_1,\ldots,\alpha_{H_1+1}),\quad 
\beta=(\beta_0,\beta_1,\ldots,\beta_{H_2+1}),\quad\text{and}\\
\gamma&=(\gamma_0,\gamma_1,\ldots,\gamma_{H_3+1}).
\end{split}
\end{align}
The definition of $\concat$ in \eqref{b25} then shows that
\begin{align}\begin{split}
(\alpha\concat\beta)\concat \gamma&=
(\beta_{0},\beta_{1},\beta_2\ldots,\beta_{H_2},\beta_{H_2+1}+\alpha_{0},\alpha_{1},\alpha_{2},\ldots,\alpha_{H_1+1})\concat (\gamma_0,\gamma_1,\ldots,\gamma_{H_3+1})\\
&=
(\gamma_0,\ldots,\gamma_{H_3},\gamma_{H_3+1}+\beta_{0},\beta_{1},\ldots,\beta_{H_2},\beta_{H_2+1}+\alpha_{0},\alpha_{1},\alpha_{2},\ldots,\alpha_{H_1+1})\\
&=
(\alpha_0,\alpha_1,\ldots,\alpha_{H_1+1})\concat 
(\gamma_{0},\gamma_{1},\ldots,\gamma_{H_3},\gamma_{H_3+1}+\beta_{0},\beta_{1},\beta_{2},\ldots,\beta_{H_2+1})
\\
&=\alpha\concat (\beta\concat\gamma).
\end{split}
\end{align}
The proof of \cref{d01} is thus completed.
\end{proof}
\begin{lemma}[$\dimsum$ and associativity]\label{d02}Assume \cref{m07},
let $H,k,l \in \N$, and let $\alpha,\beta,\gamma\in \left( \{k\}\times \N^{H} \times \{l\}\right)$.
Then
\begin{enumerate}[(i)]
\item it holds that $\alpha\dimsum\beta\in \left(\{k\}\times \N^{H} \times \{l\}\right)$,
\item it holds that $\beta\dimsum \gamma\in \left(\{k\}\times \N^{H} \times \{l\}\right)$, and 
\item it holds that $(\alpha\dimsum\beta)\dimsum \gamma
= \alpha\dimsum(\beta\dimsum \gamma)$.
\end{enumerate}
\end{lemma}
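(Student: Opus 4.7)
}
The plan is to read off all three claims directly from the definition of $\dimsum$ in \cref{b26c}, after fixing explicit coordinate expansions of $\alpha,\beta,\gamma$. The key observation is that, in the definition
\[
\alpha \dimsum \beta = (\alpha_0,\alpha_1+\beta_1,\ldots,\alpha_H+\beta_H,\beta_{H+1}),
\]
the first entry is inherited from the left operand $\alpha$ and the last entry is inherited from the right operand $\beta$, while the middle entries are simply coordinatewise sums. So all three parts of the lemma reduce to bookkeeping on coordinates.

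First, I would fix $\alpha=(k,\alpha_1,\ldots,\alpha_H,l)$, $\beta=(k,\beta_1,\ldots,\beta_H,l)$, $\gamma=(k,\gamma_1,\ldots,\gamma_H,l)$ with $\alpha_i,\beta_i,\gamma_i\in\N$ for $i\in\{1,\ldots,H\}$. Parts (i) and (ii) are then immediate from \cref{b26c}: by definition, $\alpha\dimsum\beta$ has first entry $\alpha_0=k$, last entry $\beta_{H+1}=l$, and $H$ middle entries $\alpha_i+\beta_i\in\N$, so $\alpha\dimsum\beta\in\{k\}\times\N^H\times\{l\}$. The same argument applied to $\beta,\gamma$ in place of $\alpha,\beta$ yields (ii).

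For part (iii), thanks to (i) and (ii) both $\alpha\dimsum\beta$ and $\beta\dimsum\gamma$ lie in $\{k\}\times\N^H\times\{l\}$, so \cref{b26c} can be applied once more to form $(\alpha\dimsum\beta)\dimsum\gamma$ and $\alpha\dimsum(\beta\dimsum\gamma)$. Expanding according to the definition gives
\[
(\alpha\dimsum\beta)\dimsum\gamma
=(k,\,\alpha_1+\beta_1+\gamma_1,\ldots,\alpha_H+\beta_H+\gamma_H,\,l)
=\alpha\dimsum(\beta\dimsum\gamma),
\]
where the outer first entry is $k$ because both triples start with $k$ and the outer last entry is $l$ because both triples end with $l$; the middle entries coincide by the associativity of addition in $\N$.

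I do not foresee any real obstacle here: unlike \cref{d01} for $\concat$, which involves nontrivial shifts of indices and the asymmetric ``$\beta_{H_2+1}+\alpha_0$'' entry, the operator $\dimsum$ acts in a strictly coordinatewise fashion on the interior of the tuple and passes the boundary entries through from the appropriate operand, so the only thing to be careful about is that the definition of $\dimsum$ requires both arguments to have the same total length $H+2$; this is guaranteed by the hypothesis $\alpha,\beta,\gamma\in\{k\}\times\N^H\times\{l\}$ and, for the iterated expressions in (iii), by parts (i) and (ii).
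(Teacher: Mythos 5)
Your proposal is correct and follows essentially the same route as the paper's proof: fix coordinate expansions of $\alpha,\beta,\gamma$, read off (i) and (ii) directly from the definition of $\dimsum$ in \eqref{b26c}, and obtain (iii) by expanding both sides and invoking associativity of addition in $\N$. No gaps.
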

\begin{proof}[Proof of \cref{d02}]
Throughout this proof let $\alpha_i,\beta_i,\gamma_i\in \N$, $i\in [1,H]\cap\N$, satisfy that
$
\alpha= (k,\alpha_1,\alpha_2,\ldots,\alpha_{H},l)$,
$\beta= (k,\beta_1,\beta_2,\ldots,\beta_{H},l)$, and
$\gamma= (k,\gamma_1,\gamma_2,\ldots,\gamma_{H},l).$
The definition of $\dimsum$ (see \eqref{b26c}) then shows that
\begin{align}\begin{split}
\alpha \dimsum \beta 
&=(k,\alpha_1+\beta_1,
\alpha_2+\beta_2,
\ldots,\alpha_{H}+\beta_{H},l)\in \{k\}\times\N^{H}\times\{l\},
\\
\beta\dimsum\gamma 
&=(k,\beta_1+\gamma_1,
\beta_2+\gamma_2,
\ldots,\beta_{H}+\gamma_{H},l)\in \{k\}\times\N^{H}\times\{l\},
\end{split}
\end{align} and
\begin{align}\begin{split}
(\alpha \dimsum \beta)\dimsum\gamma
&=(k,(\alpha_1+\beta_1)+\gamma_1,
(\alpha_2+\beta_2)+\gamma_2,
\ldots,(\alpha_{H}+\beta_{H})+\gamma_{H},l)
\\&=(k,\alpha_1+(\beta_1+\gamma_1),
\alpha_2+(\beta_2+\gamma_2),
\ldots,\alpha_{H}+(\beta_{H}+\gamma_{H}),l)
=
\alpha \dimsum (\beta\dimsum\gamma).\end{split}
\end{align} 
The proof of \cref{d02} is thus completed.
\end{proof}

\begin{lemma}[Triangle inequality]\label{b15}
Assume \cref{m07},
let $H,k,l \in \N$, and let $\alpha,\beta\in \{k\}\times \N^{H} \times \{l\}$.
Then it holds that
$\supnorm{\alpha\dimsum\beta}\leq\supnorm{\alpha}+
\supnorm{\beta} $.
\end{lemma}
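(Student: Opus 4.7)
The plan is to unpack the definition of $\dimsum$ coordinate-by-coordinate and then bound the supremum norm of the sum by applying the elementary inequality $\max_i(a_i+b_i) \leq \max_i a_i + \max_i b_i$ on the ``interior'' coordinates, while treating the fixed boundary coordinates $k$ and $l$ separately.

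First I would write $\alpha = (k, \alpha_1, \ldots, \alpha_H, l)$ and $\beta = (k, \beta_1, \ldots, \beta_H, l)$ with $\alpha_i, \beta_i \in \N$ for $i \in [1,H] \cap \N$. Then by the definition of $\dimsum$ in \eqref{b26c},
\begin{equation*}
 \alpha \dimsum \beta = (k, \alpha_1+\beta_1, \alpha_2+\beta_2, \ldots, \alpha_H+\beta_H, l),
\end{equation*}
so that, by the definition of $\supnorm{\cdot}$,
\begin{equation*}
 \supnorm{\alpha \dimsum \beta} = \max\!\left\{k,\, \alpha_1+\beta_1,\, \ldots,\, \alpha_H+\beta_H,\, l\right\}.
\end{equation*}

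Next I would verify the required bound coordinate by coordinate. Since $k$ and $l$ both appear as coordinates of $\alpha$ and of $\beta$, we have $k \leq \supnorm{\alpha}$, $l \leq \supnorm{\beta}$, and (using that the entries are nonnegative) also $k \leq \supnorm{\alpha}+\supnorm{\beta}$ and $l \leq \supnorm{\alpha}+\supnorm{\beta}$. For the interior indices $i \in [1,H] \cap \N$, one has $\alpha_i \leq \supnorm{\alpha}$ and $\beta_i \leq \supnorm{\beta}$, hence $\alpha_i + \beta_i \leq \supnorm{\alpha} + \supnorm{\beta}$. Taking the maximum over all indices then yields $\supnorm{\alpha \dimsum \beta} \leq \supnorm{\alpha} + \supnorm{\beta}$, which completes the argument.

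There is essentially no obstacle here — the lemma is a direct consequence of unpacking the coordinatewise ``sum'' definition of $\dimsum$ on vectors of matching length with pinned first and last coordinates. The only mildly subtle point is to remember that the endpoints of $\alpha \dimsum \beta$ are $k$ and $l$ (and not $2k$ and $2l$), which is why the estimate can be phrased cleanly as a genuine triangle inequality rather than requiring an extra additive constant.
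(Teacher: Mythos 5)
Your proof is correct and follows essentially the same route as the paper's: unpack the definition of $\dimsum$ to see that $\alpha\dimsum\beta=(k,\alpha_1+\beta_1,\ldots,\alpha_H+\beta_H,l)$ and then bound the maximum coordinatewise. Your coordinate-by-coordinate verification (including the separate treatment of the pinned endpoints $k$ and $l$) just spells out the single displayed inequality in the paper's proof in slightly more detail.
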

\begin{proof}[Proof of \cref{b15}]
Throughout this proof let $\alpha_i,\beta_i\in \N$, $i\in [1,H]\cap\N$ satisfy that
$
\alpha= (k,\alpha_1,\alpha_2,\ldots,\alpha_{H},l)
$
and
$
\beta= (k,\beta_1,\beta_2,\ldots,\beta_{H},l).
$
The definition of $\dimsum$ (see \eqref{b26c}) then shows that
$
\alpha \dimsum \beta =(k,\alpha_1+\beta_1,
\alpha_2+\beta_2,
\ldots,\alpha_{H}+\beta_{H},l).
$
This together with the triangle inequality implies that
\begin{align}\begin{split}
\supnorm{\alpha \dimsum \beta }&=\sup\left\{|k|,\left|\alpha_1+\beta_1\right|,
\left|\alpha_2+\beta_2\right|,
\ldots,\left|\alpha_{H}+\beta_{H}\right|,\left|l\right|\right\}\\
&\leq 
\sup\left\{|k|,\left|\alpha_1\right|,
\left|\alpha_2\right|,
\ldots,\left|\alpha_{H}\right|,\left|l\right|\right\}
+\sup\left\{|k|,\left|\beta_1\right|,
\left|\beta_2\right|,
\ldots,\left|\beta_{H}\right|,\left|l\right|\right\}\\
&= \supnorm{\alpha}+\supnorm{\beta}.\end{split}
\end{align} 
This completes the proof of \cref{b15}.
\end{proof}
The following result, \cref{b03}, is a variant of \cite[Lemma 5.4]{JSW18}.
\begin{lemma}[Existence of DNNs with $H\in\N$ hidden layers for the identity in $\R$]\label{b03}
Assume \cref{m07} and let $H\in \N$.
Then it holds that
$\identity{\R}\in \funcCalR(\{\Phi\in\setCalN\colon\funcCalL(\Phi)=\neutralDim{H+2} \}) $.
\end{lemma}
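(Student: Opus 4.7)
The plan is to exploit the ReLU identity $x=\max\{x,0\}-\max\{-x,0\}$ together with the idempotence $\max\{\max\{a,0\},0\}=\max\{a,0\}$, which shows that every nonnegative scalar is a fixed point of the ReLU activation $\funcBoldA{}$. These two facts together allow us to transport a real input through an arbitrary number of hidden layers of width two while preserving the information needed to recover it at the output.

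Concretely, I would construct $\Phi\in\setCalN$ with $\funcCalL(\Phi)=\neutralDim{H+2}$ explicitly as follows. For the first affine map, take $W_1\in\R^{2\times 1}$ with entries $1$ and $-1$ and $B_1=0\in\R^2$, so that for any $x_0\in\R$ we have $x_1=\funcBoldA{2}(W_1x_0+B_1)=(\max\{x_0,0\},\max\{-x_0,0\})\in[0,\infty)^2$. For each intermediate layer $n\in\{2,\ldots,H\}$ (vacuous if $H=1$), take $W_n$ to be the $2\times 2$ identity matrix and $B_n=0\in\R^2$; by idempotence of $\funcBoldA{2}$ on $[0,\infty)^2$ this yields $x_n=x_{n-1}$. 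Finally, take $W_{H+1}\in\R^{1\times 2}$ with entries $1$ and $-1$ and $B_{H+1}=0\in\R$, so that by \eqref{m07d} we get $(\funcCalR(\Phi))(x_0)=W_{H+1}x_H+B_{H+1}=\max\{x_0,0\}-\max\{-x_0,0\}=x_0$.

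The only nontrivial step is the induction confirming that $x_n=(\max\{x_0,0\},\max\{-x_0,0\})$ for every $n\in\{1,\ldots,H\}$, which reduces immediately to the observation that ReLU fixes every nonnegative number, so that applying $\funcBoldA{2}$ to the identity matrix acting on $x_{n-1}\in[0,\infty)^2$ returns $x_{n-1}$ unchanged. Since $\funcCalL(\Phi)=\neutralDim{H+2}$ holds by construction and $\funcCalR(\Phi)=\identity{\R}$ as continuous functions on $\R$, this proves that $\identity{\R}\in\funcCalR(\{\Phi\in\setCalN\colon\funcCalL(\Phi)=\neutralDim{H+2}\})$. I do not anticipate any real obstacle beyond bookkeeping; the degenerate case $H=1$ is the only thing to flag separately, and for it the construction trivially collapses to just the first and last affine maps with no intermediate identity layers required.
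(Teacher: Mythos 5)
Your construction is exactly the one in the paper's proof of \cref{b03}: the same weight matrices $W_1=(1,-1)^\top$, identity matrices in the intermediate layers, $W_{H+1}=(1,-1)$, zero biases, and the same induction showing $x_n=(\max\{x_0,0\},\max\{-x_0,0\})$ for all $n$. The argument is correct and complete.
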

\begin{proof}[Proof of \cref{b03}]
Throughout this proof 
let $W_1\in \R^{2\times1}$, $W_i\in \R^{2\times2}$, $\,i\in [2,H]\cap\N $,
$W_{H+1}\in\R^{1\times2}$, 
$B_i\in\R^2$, $i\in [1,H]\cap\N$,
$B_{H+1}\in\R^1$ satisfy that
\begin{align}\begin{split}
&W_1= \begin{pmatrix}
1\\
-1
\end{pmatrix},\quad
\forall i\in [2,H]\cap\N\colon
W_i=\begin{pmatrix}
1& 0\\
0& 1
\end{pmatrix}
,
\quad 
W_{H+1}= \begin{pmatrix}
1&-1
\end{pmatrix},\\
&
\forall i\in [1,H]\cap\N\colon 
B_i= \begin{pmatrix}
0\\0
\end{pmatrix},\quad B_{H+1}=0,\end{split}
\label{b03b}
\end{align}
let
$\phi\in \setCalN$ satisfy that
 $\phi=((W_1,B_1),(W_2,B_2),\ldots,(W_H,B_H),(W_{H+1},B_{H+1}))$, 
for every $a\in\R$ let $a^+\in [0,\infty)$ be the non-negative part of $a$, i.e., $a^+=\max\{a,0\}$, and 
let
$x_0\in \R$, $x_1,x_2,\ldots,x_{H}\in\R^2$ satisfy for all
$ n\in \N\cap [1,H]$ that 
\begin{align} 
x_n = \funcBoldA{2}(W_n x_{n-1}+B_n ).\label{b03c}
\end{align}
Note that \eqref{b03b} and the definition of  $\funcCalL$ (see \eqref{m07c}) imply that
$\funcCalL(\phi)=\neutralDim{H+2}$.
Furthermore,  \eqref{b03b}, \eqref{b03c}, and an induction argument show  that
\begin{align}
\begin{split}
x_1&= \funcBoldA{2}(W_1x_0+B_1)= 
\funcBoldA{2}\left(\begin{pmatrix}
x_0\\
-x_0
\end{pmatrix}\right)=
\begin{pmatrix}
x_0^+\\
(-x_0)^{+}
\end{pmatrix},\\
x_2&= \funcBoldA{2}(W_2x_1+B_2)= 
\funcBoldA{2}(x_1)=\funcBoldA{2}\left(\begin{pmatrix}
x_0^+\\
(-x_0)^{+}
\end{pmatrix}\right)=
\begin{pmatrix}
x_0^+\\
(-x_0)^{+}
\end{pmatrix}
,\\
&\quad \vdots\\
x_{H}&= \funcBoldA{2}(W_{H}x_{H-1}+B_{H})= 
\funcBoldA{2}(x_{H-1})=\funcBoldA{2}\left(\begin{pmatrix}
x_0^+\\
(-x_0)^{+}
\end{pmatrix}\right)=
\begin{pmatrix}
x_0^+\\
(-x_0)^{+}
\end{pmatrix}.
\end{split}
\end{align}
The definition of $\funcCalR$ (see \eqref{m07d}) hence ensures that
\begin{align}
(\funcCalR(\phi))(x_0)&=W_{H+1}x_{H}+B_{H+1}= 
\begin{pmatrix}
1&-1
\end{pmatrix}
\begin{pmatrix}
x_0^+\\
(-x_0)^{+}
\end{pmatrix}=x_0^{+}-(-x_0)^{+}=x_0.
\end{align}
The fact that $x_0$ was arbitrary therefore
proves
 that $ \funcCalR(\phi) =\identity{\R}$. This and the fact that $\funcCalL(\phi)=\neutralDim{H+2}$ demonstrate that
$\identity{\R}\in \funcCalR(\{\Phi\in\setCalN\colon\funcCalL(\Phi)=\neutralDim{H+2} \}) $.
The proof of \cref{b03} is thus completed.
\end{proof}
\begin{lemma}[DNNs for affine transformations]\label{p01}
Assume \cref{m07} and let $d,m\in \N$, 
$\lambda\in \R$,
$b\in\R^d$, $a\in\R^m$, $\Psi\in\setCalN$ satisfy that $\funcCalR(\Psi)\in C(\R^d,\R^m)$. Then it holds that
\begin{align}
\lambda\left(\Big.\!\big(\funcCalR(\Psi)\big)(\cdot +b)+a\right)\in \funcCalR\Big(\{\Phi\in\setCalN\colon \funcCalL(\Phi)=\funcCalL(\Psi)\}\Big).
\end{align}
\end{lemma}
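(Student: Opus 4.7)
The plan is to construct the required network $\Phi$ explicitly by absorbing the input shift by $b$ into the first bias of $\Psi$ and the output scaling by $\lambda$ together with the output shift by $a$ into the final weight--bias pair of $\Psi$, leaving all intermediate layers untouched. Writing $\Psi = ((W_1,B_1),\ldots,(W_{H+1},B_{H+1}))$ with layer dimensions $(d,k_1,\ldots,k_H,m)$, I would set
\begin{equation*}
\Phi = ((W_1,\, B_1 + W_1 b),\,(W_2,B_2),\ldots,(W_H,B_H),\,(\lambda W_{H+1},\, \lambda B_{H+1} + \lambda a)).
\end{equation*}
Since this only alters entries of the parameter tuples and not their matrix and vector shapes, the identity $\funcCalL(\Phi) = \funcCalL(\Psi)$ follows immediately from the definition of $\funcCalL$ in \eqref{m07c}.

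It then remains to verify that $(\funcCalR(\Phi))(x) = \lambda\bigl((\funcCalR(\Psi))(x+b) + a\bigr)$ for every $x \in \R^d$. I would do this by comparing the forward passes: let $(x_n)_{n=0}^{H}$ denote the hidden activations produced by $\Psi$ on input $x_0 = x+b$ and let $(y_n)_{n=0}^{H}$ denote the activations produced by $\Phi$ on input $y_0 = x$. The computation $W_1 y_0 + (B_1 + W_1 b) = W_1(x+b) + B_1 = W_1 x_0 + B_1$ gives $y_1 = x_1$ after applying $\funcBoldA{k_1}$, and a one-line induction exploiting that layers $2,\ldots,H$ carry identical parameters in $\Phi$ and $\Psi$ yields $y_n = x_n$ for all $n \in \N \cap [1,H]$. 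The output rule \eqref{m07d} then gives
\begin{equation*}
(\funcCalR(\Phi))(x) = (\lambda W_{H+1}) y_H + (\lambda B_{H+1} + \lambda a) = \lambda(W_{H+1} x_H + B_{H+1}) + \lambda a = \lambda\bigl((\funcCalR(\Psi))(x+b) + a\bigr),
\end{equation*}
which is exactly the desired function.

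No real obstacle is anticipated: the lemma is essentially the elementary observation that an affine pre-transformation and an affine post-transformation of a ReLU network realization can be absorbed into the first bias and into the final affine layer, respectively, without altering the layer dimensions. All the work sits in the forward-pass bookkeeping above, which is independent of the number of hidden layers of $\Psi$.
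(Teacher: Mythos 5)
Your construction is exactly the one used in the paper: absorb the input shift into the first bias via $B_1+W_1b$, absorb the scaling and output shift into the last layer via $(\lambda W_{H+1},\lambda B_{H+1}+\lambda a)$, and verify $y_n=x_n$ for the hidden activations by induction. The proposal is correct and essentially identical to the paper's proof.
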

\begin{proof}[Proof of \cref{p01}]
Throughout this proof let $H,k_0,k_1,\ldots,k_{H+1}\in\N$ satisfy that 
\begin{align}
H+2=\card{\funcCalL(\Psi)}  \quad\text{and}\quad (k_0,k_1,\ldots,k_{H},k_{H+1}) = \funcCalL(\Psi),
\end{align}
let $((W_1,B_1),(W_2,B_2),\ldots,(W_H,B_H),(W_{H+1},B_{H+1})) \in \prod_{n=1}^{H+1}\left(\R^{k_n\times k_{n-1}}\times \R^{k_n}\right)$ satisfy that
\begin{align}
\Big((W_1,B_1),(W_2,B_2),\ldots,(W_H,B_H),(W_{H+1},B_{H+1})\Big)=\Psi, 
\end{align}
let $\phi\in \setCalN$ satisfy that 
\begin{align}
\phi=\Big((W_1,B_1+W_1b),(W_2,B_2),\ldots,(W_H,B_H),(\lambda W_{H+1},\lambda B_{H+1}+\lambda a)\Big),
\end{align}
and let
$x_0,y_0 \in \R^{k_0},x_1,y_1 \in \R^{k_1},\ldots,x_{H},y_H\in \R^{k_{H}}$ satisfy 
for all $n\in \N\cap [1,H]$
that
\begin{align} 
x_n = \funcBoldA{k_n}(W_n x_{n-1}+B_n ),\, 
y_n = \funcBoldA{k_n}(W_n y_{n-1}+B_n+\1_{\{1\}}(n)W_1b )
\quad\text{and} \quad x_0=y_0+b.
\end{align}
Then
it holds that
\begin{align}
y_1= \funcBoldA{k_1}(W_1 y_{0}+B_1+W_1b )= \funcBoldA{k_1}(W_1( y_{0}+b)+B_1 )
=\funcBoldA{k_1}(W_1x_0+B_1 )=x_1.
\end{align}
This and an induction argument prove for all $ i\in [2,H]\cap\N$ that
\begin{align}\begin{split}
y_i=\funcBoldA{k_i}(W_i y_{i-1}+B_i )= \funcBoldA{k_i}(W_i x_{i-1}+B_i )=x_i.
\end{split}
\end{align}
The definition of $\funcCalR$ (see \eqref{m07d}) hence shows that
\begin{align}\begin{split}
(\funcCalR(\phi))(y_0)&= \lambda W_{H+1}y_H+\lambda B_{H+1}+\lambda a=\lambda W_{H+1}x_H+\lambda B_{H+1}+\lambda a\\&
=\lambda (W_{H+1}x_H+ B_{H+1}+ a)
=\lambda(
(\funcCalR(\Psi))(x_0)+a)= \lambda (\funcCalR(\Psi))(y_0+b)+a.
\end{split}
\end{align}
This and the fact that
$y_0$ was arbitrary
prove that $\funcCalR(\phi)=\lambda ((\funcCalR(\Psi))(\cdot+b)+a)
$. This and the fact that
$\funcCalL(\phi)=\funcCalL(\Psi)$ imply that
$\lambda\left((\funcCalR(\Psi))(\cdot +b)+a\right)\in \funcCalR(\{\Phi\in\setCalN\colon \funcCalL(\Phi)=\funcCalL(\Psi)\})$. The proof of \cref{p01} is thus completed.
\end{proof}
\begin{lemma}[Composition]\label{m11b}
Assume \cref{m07} and let $d_1,d_2,d_3\in\N$, $f\in C(\R^{d_2},\R^{d_3})$, $g\in C(  \R^{d_1}, \R^{d_2}) $, 
$\alpha,\beta\in \setCalD$ satisfy that
$f\in \funcCalR(\{\Phi\in \setCalN\colon \funcCalL(\Phi)=\alpha\})$
and
$g\in \funcCalR(\{\Phi\in \setCalN\colon \funcCalL(\Phi)=\beta\})$.
Then it holds
that $(f\circ g)\in \funcCalR(\{\Phi\in \setCalN\colon \funcCalL(\Phi)=\alpha\concat\beta\})$.
\end{lemma}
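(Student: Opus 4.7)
The plan is to construct explicitly a network $\Psi\in\setCalN$ satisfying $\funcCalL(\Psi)=\alpha\concat\beta$ and $\funcCalR(\Psi)=f\circ g$. First I would fix DNN representations $\Phi_f=((W_1,B_1),\ldots,(W_{H_1+1},B_{H_1+1}))$ of $f$ with architecture $\alpha$ and $\Phi_g=((V_1,C_1),\ldots,(V_{H_2+1},C_{H_2+1}))$ of $g$ with architecture $\beta$, noting that the hypothesis that $f\circ g$ is well-defined forces $\alpha_0=d_2=\beta_{H_2+1}$.

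The naive idea of gluing the two weight lists together fails because the last layer of $\Phi_g$ is affine (no ReLU) whereas the first hidden layer of $\Phi_f$ applies ReLU to an affine image, and the output of $g$ cannot be passed through a ReLU without changing the computed function. The remedy is the componentwise extension of the identity-representation trick from \cref{b03}: the decomposition $y=\max\{y,0\}-\max\{-y,0\}$ applied in each coordinate realizes $\identity{\R^{d_2}}$ via one hidden layer of width $2d_2=\beta_{H_2+1}+\alpha_0$. Concretely, I would set
\begin{equation*}
\Psi := \bigl((V_1,C_1),\ldots,(V_{H_2},C_{H_2}),\,(\mathcal V,\mathcal C),\,(\mathcal W,B_1),(W_2,B_2),\ldots,(W_{H_1+1},B_{H_1+1})\bigr),
\end{equation*}
where $\mathcal V=\bigl[\begin{smallmatrix}V_{H_2+1}\\-V_{H_2+1}\end{smallmatrix}\bigr]\in\R^{2d_2\times\beta_{H_2}}$, $\mathcal C=\bigl[\begin{smallmatrix}C_{H_2+1}\\-C_{H_2+1}\end{smallmatrix}\bigr]\in\R^{2d_2}$, and $\mathcal W=[W_1,\,-W_1]\in\R^{\alpha_1\times 2d_2}$. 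Reading off the layer widths and invoking \cref{b25} yields $\funcCalL(\Psi)=(\beta_0,\beta_1,\ldots,\beta_{H_2},\beta_{H_2+1}+\alpha_0,\alpha_1,\ldots,\alpha_{H_1+1})=\alpha\concat\beta$.

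To verify $\funcCalR(\Psi)=f\circ g$ I would track activations layer by layer. The first $H_2$ hidden activations of $\Psi$ coincide with those of $\Phi_g$; the inserted layer has pre-activation $\mathcal V y_{H_2}+\mathcal C=(g(x),-g(x))^\top$ and thus, after applying $\funcBoldA{2d_2}$, produces $(g(x)^+,(-g(x))^+)^\top$; the subsequent affine step collapses this via the telescoping identity to $W_1(g(x)^+-(-g(x))^+)+B_1=W_1g(x)+B_1$, which is precisely the first pre-activation of $\Phi_f$ at input $g(x)$. From that point on, the remaining hidden activations and the final affine output of $\Psi$ reproduce those of $\Phi_f$ evaluated at $g(x)$, so $\funcCalR(\Psi)(x)=f(g(x))$ for every $x\in\R^{d_1}$. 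The only real difficulty is the index bookkeeping around the inserted $2d_2$-wide layer; the underlying identity realization is just the coordinatewise analogue of \cref{b03} and requires no additional ideas.
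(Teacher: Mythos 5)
Your construction is correct and is essentially the paper's argument made explicit: the paper obtains the $2d_2$-wide identity network from \cite[Lemma~5.4]{JSW18} and then invokes \cite[Proposition~5.2]{JSW18}, which performs exactly the insertion of the ReLU-realized identity $y\mapsto \max\{y,0\}-\max\{-y,0\}$ between the output layer of $\Phi_g$ and the first layer of $\Phi_f$ that you carry out by hand. The only difference is that you write out the block matrices and the layer-by-layer verification explicitly, whereas the paper delegates this to the cited results.
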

\begin{proof}[Proof of \cref{m11b}]
Throughout this proof let $H_1,H_2,\alpha_0,\ldots, \alpha_{H_1+1},\beta_0,\ldots, \beta_{H_2+1}\in \N$, $\funcPhi{f}, \funcPhi{g}\in \setCalN$ satisfy that
\begin{equation}
\begin{split}
&(\alpha_0,\alpha_1,\ldots,\alpha_{H_1+1})=\alpha, \quad
(\beta_0,\beta_1,\ldots,\beta_{H_2+1})=\beta, \quad
\funcCalR(\funcPhi{f})=f , \\
&\funcCalL(\funcPhi{f})=\alpha , \quad
 \funcCalR(\funcPhi{g})=g,\quad \text{and}\quad
\funcCalL(\funcPhi{g})=\beta. 
\end{split}
\end{equation}
Lemma 5.4 in \cite{JSW18} shows that there exists $\mathbb{I}\in\setCalN$ such that $\funcCalL(\mathbb{I})=d_2\neutralDim{3}= (d_2,2d_2,d_2)$
and $\funcCalR(\mathbb{I}) =\identity{\R^{d_2}}$. 
Note that $2d_2=\beta_{H_2+1}+\alpha_0$.
This and
\cite[Proposition 5.2]{JSW18}
(with $\phi_1= \funcPhi{f}$, $\phi_2= \funcPhi{g}$, and $\mathbb{I}=\mathbb{I}$ in the notation of \cite[Proposition 5.2]{JSW18})
 show that there exists 
$\funcPhi{f\circ g}\in\setCalN$ such that
$  \funcCalR ( \funcPhi{f\circ g})=f\circ g$ and $\funcCalL(\funcPhi{f\circ g})= \funcCalL(\funcPhi{f})\concat\funcCalL(\funcPhi{g})=\alpha\concat\beta$. Hence,
it holds
that $f\circ g\in \funcCalR(\{\Phi\in \setCalN\colon \funcCalL(\Phi)=\alpha\concat\beta\})$.
The proof of \cref{m11b} is thus completed.
\end{proof}
The following result, \cref{b01}, essentially generalizes \cite[Lemma 5.1]{JSW18} to the case where the DNNs have different hidden layer dimensions.
\begin{lemma}[Sum of DNNs of the same length]
\label{b01}
Assume \cref{m07} and let $M,H,p,q\in \N$,  $h_1,h_2,\ldots,h_M\in\R$,
 $k_i\in \setCalD $,
$f_i\in C(\R^{p},\R^{q})$,
$i\in [1,M]\cap\N$, satisfy 
for all $i\in [1,M]\cap\N$
that
\begin{align}
 \!\card{k_i}=H+2\quad\text{and}\quad f_i\in 
\funcCalR\left(\Big.\!\left\{\big.\Phi\in\setCalN\colon \funcCalL(\Phi)=k_i\right\}\right).
\end{align}
Then
it holds that 
\begin{align}
\sum_{i=1}^{M}h_if_i
\in\funcCalR\left(\left\{ \Phi\in\setCalN\colon
\funcCalL(\Phi)=\dimsum_{i=1}^Mk_i\right\}\right).
\label{b07}
\end{align}
\end{lemma}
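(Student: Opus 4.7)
My plan is to construct the desired DNN by parallelizing the $M$ given DNNs in each hidden layer and then combining their outputs with the scalar weights $h_i$ in the final affine layer. I will first reduce to a notation in which, for each $i\in[1,M]\cap\N$, a concrete realization $\Phi_i = ((W_1^i,B_1^i),\ldots,(W_{H+1}^i,B_{H+1}^i))\in \setCalN$ with $\funcCalL(\Phi_i)=k_i=(p,k_i^{(1)},\ldots,k_i^{(H)},q)$ and $\funcCalR(\Phi_i)=f_i$ is fixed; these exist by assumption. The definition of $\dimsum$ in \eqref{b26c} together with \cref{d02} then yields that $\dimsum_{i=1}^M k_i=(p,\sum_{i=1}^M k_i^{(1)},\ldots,\sum_{i=1}^M k_i^{(H)},q)$.

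Next I would define the parallelized DNN $\Phi=((\widetilde W_1,\widetilde B_1),\ldots,(\widetilde W_{H+1},\widetilde B_{H+1}))$ by vertically stacking the first-layer matrices, using block-diagonal matrices in every intermediate layer, and horizontally concatenating $h_i W_{H+1}^i$ in the output layer. More precisely, I would set $\widetilde W_1 = (W_1^1\ \cdots\ W_1^M)^{\top\top}$ (the vertical stack), $\widetilde B_1 = (B_1^1,\ldots,B_1^M)^{\top}$ (likewise stacked), and for $n\in[2,H]\cap\N$ define $\widetilde W_n$ as the block-diagonal matrix with blocks $W_n^1,\ldots,W_n^M$ and $\widetilde B_n$ as the corresponding stack of $B_n^i$. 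For the final layer I would take $\widetilde W_{H+1} = (h_1 W_{H+1}^1\ \ h_2 W_{H+1}^2\ \cdots\ h_M W_{H+1}^M)$ and $\widetilde B_{H+1} = \sum_{i=1}^M h_i B_{H+1}^i$. By construction $\funcCalL(\Phi)=\dimsum_{i=1}^M k_i$, which settles the architecture part.

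It remains to verify that $\funcCalR(\Phi)=\sum_{i=1}^M h_i f_i$. I would do this by tracking the forward pass: for an arbitrary $x_0\in\R^p$, I would denote by $x_n^i\in\R^{k_i^{(n)}}$ the $n$-th layer activation produced inside the network $\Phi_i$ on input $x_0$, and by $\widetilde x_n\in\R^{\sum_{i=1}^M k_i^{(n)}}$ the $n$-th layer activation of $\Phi$. Using that all $\Phi_i$ share the same input $x_0$, the first layer gives $\widetilde x_1=(x_1^1,\ldots,x_1^M)^{\top}$, since $\funcBoldA{}$ acts componentwise and the stacked matrix just applies $W_1^i$ to the common input. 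For each subsequent hidden layer the block-diagonal structure together with the componentwise action of $\funcBoldA{}$ immediately yields, by a short induction on $n\in[1,H]\cap\N$, that $\widetilde x_n=(x_n^1,\ldots,x_n^M)^{\top}$. Finally, \eqref{m07d} applied to $\Phi$ gives
\begin{equation*}
(\funcCalR(\Phi))(x_0)=\widetilde W_{H+1}\widetilde x_H+\widetilde B_{H+1}=\sum_{i=1}^M h_i\bigl(W_{H+1}^i x_H^i+B_{H+1}^i\bigr)=\sum_{i=1}^M h_i f_i(x_0),
\end{equation*}
which proves \eqref{b07}.

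The main obstacle is purely bookkeeping: keeping the block-diagonal/stacked notation consistent across the first, intermediate, and last layers, since the first layer must broadcast one input to $M$ branches (vertical stack) whereas intermediate layers must keep the branches separate (block diagonal), and the last layer must re-merge them with the coefficients $h_i$. No deeper idea is required, and in particular I will not need \cref{m11b} or \cref{b03}; only the componentwise nature of $\funcBoldA{}$ and the definitions \eqref{m07c}--\eqref{m07d} and \eqref{b26c} are used.
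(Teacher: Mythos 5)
Your proposal is correct and follows essentially the same route as the paper's proof: vertically stacking the first layer, using block-diagonal matrices in the intermediate layers, merging with the weights $h_i$ in the output layer, and verifying the realization by induction on the layer index. The only cosmetic difference is that you invoke \cref{d02} to identify $\dimsum_{i=1}^M k_i$ explicitly, whereas the paper reads this off directly from \eqref{b26c} after noting that each $k_i$ begins with $p$ and ends with $q$ because $f_i\in C(\R^p,\R^q)$.
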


\begin{proof}[Proof of \cref{b01}]
Throughout this proof 
let 
$\phi_i\in \setCalN $, $i\in [1,M]\cap\N$, and
$k_{i,n}\in\N$, $i\in [1,M]\cap\N$, $n\in [0,H+1]\cap\N_0$, satisfy for all
$ i \in [1,M]\cap\N$ that 
\begin{align}
 \funcCalL(\phi_i)=k_i=
(k_{i,0},k_{i,1},k_{i,2},\ldots,k_{i,H},k_{i,H+1})
 \quad\text{and}\quad\funcCalR(\phi_i)=f_i,\label{b08a}
\end{align}
for every $i\in[1,M]\cap\N$
let $((W_{i,1}, B_{i,1}),\ldots,  (W_{i,H+1}, B_{i,H+1}))\in \prod_{n=1}^{H+1}\left(\R^{k_{i,n}\times k_{i,n-1}} \times \R^{k_{i,n}}\right)$
satisfy that  
\begin{align}
\phi_i=
\left((W_{i,1}, B_{i,1}),\ldots,  (W_{i,H+1},B_{i,H+1})\right) ,
\end{align}
let 
$k_n^{\dimsum}\in\N$, $n\in [1,H]\cap\N$,
$k^{\dimsum}\in \N^ {H+2}$
satisfy for all $n\in [1,H]\cap\N$ that
\begin{align}\label{b01b}\begin{split}
k_n^{\dimsum}=\sum_{i=1}^{M}k_{i,n}\quad\text{and}\quad 
k^{\dimsum}=(p,k^{\dimsum}_1,k^{\dimsum}_2,\ldots, k^{\dimsum}_{H},q),
\end{split}
\end{align}
let $W_1\in \R^{k_1^{\dimsum}\times p}$, $B_1\in \R^{k_1^{\dimsum}}$ satisfy that
\begin{align}
W_1=
\begin{pmatrix}
W_{1,1}\\
W_{2,1}\\
\vdots\\
W_{M,1}
\end{pmatrix}
\quad\text{and}\quad
B_1=
\begin{pmatrix}
B_{1,1}\\
B_{2,1}\\
\vdots\\
B_{M,1}
\end{pmatrix},\label{b01c}
\end{align}
let $W_n\in\R^{k_n^{\dimsum}\times k_{n-1}^{\dimsum}}$, $B_n\in\R^{k^{\dimsum}_{n}}$, $n\in [2,H]\cap\N$, satisfy for all $n\in [2,H]\cap\N$ that
\begin{align}\begin{split}
W_n= \begin{pmatrix}
W_{1,n} &	0		&	0		&	0	\\
0 		&	W_{2,n}	&	0		&	0	\\
0		&	0		&	\ddots	&	0	\\
0		&	0		&	0		&W_{M,n}
\end{pmatrix}
\quad\text{and}\quad
B_n=
\begin{pmatrix}
B_{1,n}\\
B_{2,n}\\
\vdots\\
B_{M,n}
\end{pmatrix},\end{split}\label{b01d}
\end{align}
let $W_{H+1}\in  \R^{q\times k_{H}^{\dimsum}}$, $B_{H+1}\in \R^{q}$ satisfy that
\begin{align}\begin{split}
W_{H+1}= \begin{pmatrix}
h_1W_{1,H+1}&\ldots&h_MW_{M,H+1}
\end{pmatrix}\quad\text{and}\quad B_{H+1} = \sum_{i=1}^{M}h_iB_{i,H+1},
\end{split}\label{b01e}
\end{align}
let $x_0\in\R^p,\, x_1\in \R^{k_1^{\dimsum}},
x_2\in \R^{k_2^{\dimsum}}
\ldots,x_H\in \R^{k_H^{\dimsum}}$, 
let 
$x_{1,0},x_{2,0},\ldots,x_{M,0}\in \R^{p}$,
 $x_{i,n}\in \R^{k_{i,n}}$, $i\in [1,M]\cap\N$, $n\in [1,H]\cap\N$, satisfy 
for all $i\in [1,M]\cap\N$, $n\in [1,H]\cap\N$
that 
\begin{align}\begin{split}
&x_0=x_{1,0}=x_{2,0}=\ldots=x_{M,0},\\
&x_{i,n}=\funcBoldA{k_{i,n}}(W_{i,n}x_{i,n-1}+B_{i,n}),\\ 
&x_n= \funcBoldA{k^{\dimsum}_{n}}(W_{n}x_{n-1}+B_{n}),
\end{split}
\end{align}
and let $\psi\in \setCalN$ satisfy that
\begin{align}
\psi= \left((W_1,B_1),(W_2,B_2),\ldots,(W_H,B_H),(W_{H+1},B_{H+1})\right).
\end{align}
First, 
the definitions of $\funcCalL$ and $\funcCalR$ (see \eqref{m07c} and \cref{m07d}),
\eqref{b08a}, and the fact that $\forall \, i\in [1,M]\cap\N\colon f_i\in C(\R^p,\R^q)$ show for all $i\in [1,M]\cap\N$ that $
k_i=
(p,k_{i,1},k_{i,2},\ldots,k_{i,H},q).
$
The definition of $\funcCalL$ (see \eqref{m07c}),
the definition of $\dimsum$ (see \eqref{b26c}),
and \eqref{b01b} then show that
\begin{align}\funcCalL(\psi)= (p,k_1^{\dimsum},\ldots,k_H^{\dimsum},q)=\dimsum_{i=1}^Mk_i.\label{b23}
\end{align}
Next, we prove by induction on $n\in [1,H]\cap\N$ that $ x_n=(x_{1,n},x_{2,n},\ldots,x_{M,n})$.
First, \eqref{b01c} shows  that
\begin{align}
W_1x_0+B_1= 
\begin{pmatrix}
W_{1,1}\\
W_{2,1}\\
\vdots\\
W_{M,1}
\end{pmatrix}x_0+
\begin{pmatrix}
B_{1,1}\\
B_{2,1}\\
\vdots\\
B_{M,1}
\end{pmatrix}
=
\begin{pmatrix}
W_{1,1}x_0+B_{1,1}\\
W_{2,1}x_0+B_{2,1}\\
\vdots\\
W_{M,1}x_0+B_{M,1}
\end{pmatrix}.\label{b20}
\end{align}
This implies that
\begin{align}
x_1= \funcBoldA{k_1^{\dimsum}}(W_1x_0+B_1)=\begin{pmatrix}
x_{1,1}\\x_{2,1}\\\vdots\\x_{M,1}\end{pmatrix}.
\end{align}
This proves the base case. Next, for the induction step let $n\in [2,H]\cap\N$ and assume that $x_{n-1}=(x_{1,n-1},x_{2,n-1},\ldots,x_{M,n-1})$.
Then \eqref{b01d} and the induction hypothesis ensure that
\begin{align}\begin{split}
&W_nx_{n-1}+B_n
\\
&= W_{n}\begin{pmatrix}
x_{1,n-1}\\
x_{2,n-1}\\
\vdots\\
x_{M,n-1}
\end{pmatrix}+B_{n}
=\begin{pmatrix}
W_{1,n} &	0		&	0		&	0	\\
0 		&	W_{2,n}	&	0		&	0	\\
0		&	0		&	\ddots	&	0	\\
0		&	0		&	0		&W_{M,n}
\end{pmatrix}
\begin{pmatrix}
x_{1,n-1}\\
x_{2,n-1}\\
\vdots\\
x_{M,n-1}
\end{pmatrix}+
\begin{pmatrix}
B_{1,n}\\
B_{2,n}\\
\vdots\\
B_{M,n}
\end{pmatrix}
\\
&=
\begin{pmatrix}
W_{1,n}x_{1,n-1}+
B_{1,n}\\
W_{2,n}x_{2,n-1}+B_{2,n}\\
\vdots\\
W_{M,n}x_{M,n-1}+ B_{M,n}
\end{pmatrix}.\label{b21}\end{split}
\end{align}
This yields that
\begin{align}
x_{n}= \funcBoldA{k_n^{\dimsum}}(W_nx_{n-1}+B_n)=\begin{pmatrix}
x_{1,n}\\x_{2,n}\\\vdots\\x_{M,n}
\end{pmatrix}.
\end{align}
This proves the induction step. Induction now proves for all $n\in [1,H]\cap\N$ that
$x_n=(x_{1,n},x_{2,n},\ldots,x_{M,n})$.
This, the definition of $\funcCalR$ (see \eqref{m07d}), and
\eqref{b01e} imply that
\begin{align}\begin{split}
&(\funcCalR(\psi))(x_0)=W_{H+1}x_H+B_{H+1}\\
&=W_{H+1}\begin{pmatrix}
x_{1,H}\\
x_{2,H}\\
\vdots\\
x_{M,H}
\end{pmatrix}+B_{H+1}
=\begin{pmatrix}
h_1W_{1,H+1}&\ldots&h_MW_{M,H+1}
\end{pmatrix}
\begin{pmatrix}
x_{1,H}\\
x_{2,H}\\
\vdots\\
x_{M,H}
\end{pmatrix}+\left[\sum_{i=1}^{M}h_iB_{i,H+1}\right]\\
&=\left[\sum_{i=1}^{M}h_iW_{i,H+1}x_{i,H}\right]+\left[\sum_{i=1}^{M}h_iB_{i,H+1}\right]=
\sum_{i=1}^{M}h_i\left(W_{i,H+1}x_{i,H}+B_{i,H+1}\right)\\&=\sum_{i=1}^M h_i(\funcCalR(\phi_i))(x_0).
\end{split}
\end{align}
This, the fact that $x_0\in \R^{p}$  was arbitrary, and \eqref{b08a} yield that
\begin{align}
\funcCalR(\psi)= \sum_{i=1}^{M}h_i\funcCalR(\phi_i)=\sum_{i=1}^{M}h_if_i.
\end{align}
This and \eqref{b23} show that
\begin{align}
\sum_{i=1}^{M}h_if_i
\in\funcCalR\left(\left\{ \Phi\in\setCalN\colon
\funcCalL(\Phi)=\dimsum_{i=1}^Mk_i\right\}\right).
\end{align}
The proof of \cref{b01} is thus completed.
\end{proof}

\subsection{Deep neural network representations for MLP approximations}
\begin{lemma}\label{b26}
Assume \cref{m07}, 
let $d,M\in \N$, 
$T,c \in (0,\infty)$, 
$f\in C(\R,\R)$, 
$g \in C( \R^d, \R)$,
$\Phi_f,\Phi_\funcG\in \setCalN$ satisfy that
$\funcCalR(\Phi_f)= f$,
$\funcCalR(\Phi_\funcG)= \funcG$,
and
\begin{align}\label{b12}
c\geq\max\left\{2, \supnorm{\funcCalL(\funcPhi{f})},\supnorm{\funcCalL(\funcPhi{\funcG})}\right\},
\end{align}
let
$(\Omega, \mathcal{F}, \P)$
be a probability space, 
let
$  \Theta = \bigcup_{ n \in \N } \Z^n$,
let $\unif^\theta\colon \Omega\to[0,1]$, $\theta\in \Theta$, be independent random variables which are uniformly distributed on $[0,1]$, 
let $\uniform^\theta\colon [0,T]\times \Omega\to [0, T]$, $\theta\in\Theta$, satisfy 
for all $t\in [0,T]$, $\theta\in \Theta$ that 
$\uniform^\theta _t = t+ (T-t)\unif^\theta$,
let $\sppr^\theta\colon[0,T]\times \Omega \to\R^d $, $\theta\in \Theta$, be independent 
standard Brownian motions with continuous sample paths,
assume that $(\unif^\theta)_{\theta\in \Theta}$ and $(\sppr^\theta)_{\theta\in \Theta}$ are independent,
let
$ 
  {\bigU}_{ n,M}^{\theta } \colon [0, T] \times \R^d \times \Omega \to \R
$, $n,M\in\Z$, $\theta\in\Theta$, satisfy
for all $n \in \N$, $\theta\in\Theta $, 
$ t \in [0,T]$, $x\in\R^d $
that ${\bigU}_{-1,M}^{\theta}(t,x)={\bigU}_{0,M}^{\theta}(t,x)=0$ and 
\begin{equation}
\begin{split}
&  {\bigU}_{n,M}^{\theta}(t,x)
=
  \frac{1}{M^n}
 \sum_{i=1}^{M^n} 
      \funcG\big(x+\sppr^{(\theta,0,-i)}_{T}-\sppr^{(\theta,0,-i)}_{t}\big)
 \\
 &+
  \sum_{l=0}^{n-1} \frac{(T-t)}{M^{n-l}}
    \left[\sum_{i=1}^{M^{n-l}}
      \big(f\circ{\bigU}_{l,M}^{(\theta,l,i)}-\1_{\N}(l)f\circ {\bigU}_{l-1,M}^{(\theta,-l,i)}\big)\!\!
      \left(\uniform_t^{(\theta,l,i)},x+\sppr_{\uniform_t^{(\theta,l,i)}}^{(\theta,l,i)}-\sppr_{t}^{(\theta,l,i)}\right)
    \right],
\end{split}\label{t27b}
\end{equation}
and let $\omega\in\Omega$.
 Then 
 for all $n\in \N_0$
there exists  a family $(\netflow{n,t}{\theta})_{\theta\in\Theta,t\in [0,T]}\subseteq \setCalN$ such that 
\begin{enumerate}[(i)]
\item \label{it1} it holds for all
$t_1,t_2\in [0,T]$, $\theta_1,\theta_2\in\Theta$
that  
 \begin{align}\label{b13}
\funcCalL\left(\netflow{n,t_1}{\theta_1}\right)=\funcCalL\left(\netflow{n,t_2}{\theta_2}\right),
\end{align}
\item it holds for all 
$t\in [0,T]$, $ \theta\in\Theta$
 that
\begin{align}\label{b14}\begin{split}
&\cardL{\netflow{n,t}{\theta}}= n\Big(\!\cardL{\funcPhi{f}}-1\Big)+\cardL{\funcPhi{\funcG}},
\end{split}
\end{align}
\item it holds for all 
$t\in [0,T]$, $ \theta\in\Theta$
 that
\begin{align}\label{b14a}\begin{split}
&\supnorm{\funcCalL(\netflow{n,t}{\theta} )}\leq c(3 M)^n,
\end{split}
\end{align}
and
\item \label{it3}
it holds for all
 $\theta\in \Theta$, $t\in[0,T]$, $x\in \R^d$  that
\begin{align}
{\bigU}_{n,M}^{\theta}(t,x,\omega)=(\funcCalR(\netflow{n,t}{\theta}))(x).\label{b14b}
\end{align}
\end{enumerate}
\end{lemma}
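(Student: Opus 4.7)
The plan is to proceed by induction on $n\in\N_0$, assembling $\netflow{n,t}{\theta}$ as a $\dimsum$-sum (via Lemma \ref{b01}) of subnetworks representing the individual summands in the MLP recursion \eqref{t27b}, each padded to a common length using identity networks from Lemma \ref{b03}. For the base case $n=0$, since $\bigU_{0,M}^{\theta}(t,x,\omega)=0$, apply Lemma \ref{p01} to $\funcPhi{\funcG}$ with $\lambda=0$ to obtain a network $\netflow{0,t}{\theta}$ of layer structure $\funcCalL(\funcPhi{\funcG})$ realizing the constant $0$; then properties \eqref{it1}--\eqref{it3} hold immediately with $\supnorm{\funcCalL(\funcPhi{\funcG})}\leq c=c(3M)^0$.

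For the induction step, assume the family has been constructed for every $l<n$. Fix $\theta\in\Theta$ and $t\in[0,T]$, and build $\netflow{n,t}{\theta}$ from three groups of subnetworks. First, for each $i\in\{1,\ldots,M^n\}$, Lemma \ref{p01} applied to $\funcPhi{\funcG}$ yields a network of layer structure $\funcCalL(\funcPhi{\funcG})$ realizing $x\mapsto\frac{1}{M^n}\funcG(x+\sppr^{(\theta,0,-i)}_T(\omega)-\sppr^{(\theta,0,-i)}_t(\omega))$. Second, for each $l\in\{0,\ldots,n-1\}$ and $i\in\{1,\ldots,M^{n-l}\}$, the induction hypothesis provides $\netflow{l,\uniform_t^{(\theta,l,i)}(\omega)}{(\theta,l,i)}$ representing $\bigU_{l,M}^{(\theta,l,i)}(\uniform_t^{(\theta,l,i)}(\omega),\cdot)$; Lemma \ref{p01} inserts the shift by $\sppr_{\uniform_t^{(\theta,l,i)}}^{(\theta,l,i)}(\omega)-\sppr_t^{(\theta,l,i)}(\omega)$, and Lemma \ref{m11b} composes with $\funcPhi{f}$ on the left, giving a network of length $(l+1)(\cardL{\funcPhi{f}}-1)+\cardL{\funcPhi{\funcG}}$. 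The third group, present only for $l\geq 1$, treats $f\circ\bigU_{l-1,M}^{(\theta,-l,i)}$ analogously, yielding length $l(\cardL{\funcPhi{f}}-1)+\cardL{\funcPhi{\funcG}}$. Now pad each subnetwork on the output side by composing (Lemma \ref{m11b}) with an identity-on-$\R$ network of the appropriate depth supplied by Lemma \ref{b03}, so every summand has the common length $n(\cardL{\funcPhi{f}}-1)+\cardL{\funcPhi{\funcG}}$; then invoke Lemma \ref{b01} to form the weighted sum with the scalar coefficients $\pm\tfrac{T-t}{M^{n-l}}$ and $\tfrac{1}{M^n}$ from \eqref{t27b} (associativity of $\dimsum$ is guaranteed by Lemma \ref{d02}).

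Properties \eqref{it1} and \eqref{b14} are built into the construction since the resulting layer structure depends only on $n$, $M$, $\funcCalL(\funcPhi{f})$, and $\funcCalL(\funcPhi{\funcG})$, not on $t$ or $\theta$; the length identity follows from $\card{\alpha\concat\beta}=\card{\alpha}+\card{\beta}-1$ together with the induction hypothesis; and the realization identity \eqref{b14b} chains the realization identities in Lemmas \ref{p01}, \ref{m11b}, \ref{b01}. The main technical content is the width bound \eqref{b14a}. Since $c\geq 2$ and the intermediate buffers inserted by the composition construction of Lemma \ref{m11b} (for scalar intermediate outputs) have width $2\leq c$, each subnetwork in the first group has max width $\leq c$, and each subnetwork in the second or third group has max width $\leq c(3M)^l$ by the induction hypothesis. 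Applying Lemma \ref{b15} to bound the max width of the $\dimsum$-sum by the sum of widths gives
\[
\supnorm{\funcCalL(\netflow{n,t}{\theta})}\leq M^n\cdot c+2\sum_{l=0}^{n-1}M^{n-l}\cdot c(3M)^l=cM^n+cM^n(3^n-1)=c(3M)^n,
\]
as required. The main obstacle I anticipate is this tight width accounting: ensuring that padding and composition buffers do not inflate the widths beyond $c(3M)^n$, and that the geometric series telescopes exactly to this value rather than to a looser constant.
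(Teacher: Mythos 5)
Your proposal is correct and follows essentially the same route as the paper's proof: induction on $n$, shifts via \cref{p01}, depth-padding with identity networks from \cref{b03} composed via \cref{m11b}, summation via \cref{b01}, and the width bound via \cref{b15}; your bookkeeping $cM^n+2cM^n\sum_{l=0}^{n-1}3^l=c(3M)^n$ matches the paper's estimate \eqref{b18}. The only (harmless) deviations are that you pad at the output rather than between $\funcPhi{f}$ and the inner network, and you realize the zero function in the base case via \cref{p01} with $\lambda=0$.
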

\begin{proof}[Proof of \cref{b26}]
We prove \cref{b26} by induction on $n\in \N_0$. 
For the base case $n=0$ note that
the fact that 
$\forall \, t\in [0,T],\theta\in \Theta\colon U^\theta_{0,M}(t,\cdot)=0 $,
the fact that the function $0$ can be represented by a network with depth $\cardL{\funcPhi{\funcG}}$, and \eqref{b12}
 imply that there exists 
$(\netflow{0,t}{\theta})_{\theta\in \Theta, t\in[0,T]}\subseteq\setCalN$ such that
 it holds for all
$
t_1,t_2\in [0,T]$, $\theta_1,\theta_2\in\Theta$
that
$
 \funcCalL\left(\netflow{0,t_1}{\theta_1}\right)=\funcCalL\left(\netflow{0,t_2}{\theta_2}\right)
$
and such that
 it holds for all $ \theta\in \Theta$, $t\in[0,T]$ that
$
 \card{\funcCalL(\netflow{0,t}{\theta})}=\cardL{\funcPhi{\funcG}}$, $
\supnorm{\funcCalL(\netflow{0,t}{\theta} )}\leq\supnorm{\funcCalL(\funcPhi{g})}\leq c$, and $
   {\bigU}_{0,M}^{\theta}(t,\cdot,\omega)= \funcCalR(\netflow{0,t}{\theta})$.
This proves
the base case $n=0$. 
%

For the induction step from $n\in\N_0$ to $n+1\in\N$ let $n\in \N_0$ and assume 
that \cref{it1}--\cref{it3} hold true for all $k\in [0,n]\cap \N_0$. The assumption that 
$\funcG=\funcCalR(\Phi_\funcG)$ and
\cref{p01} (with $d=d$, $m=1$, $\lambda=1$, $a=0$, $b=\sppr^{\theta}_{T}(\omega)-\sppr^{\theta}_{t}(\omega)$, and $\Psi=\Phi_\funcG$ 
for $\theta\in \Theta$, $t\in [0,T]$
in the notation of \cref{p01})
show for all $\theta\in \Theta$, $t\in [0,T]$ that
\begin{align}
\begin{split}
      \funcG\big(\cdot+\sppr^{\theta}_{T}(\omega)-\sppr^{\theta}_{t}(\omega)\big)
&=(\funcCalR(\Phi_\funcG))\big(\cdot+\sppr^{\theta}_{T}(\omega)-\sppr^{\theta}_{t}(\omega)\big)
\\
&\in \funcCalR\left(\left\{\Phi\in\setCalN\colon
\funcCalL(\Phi)=\funcCalL(\funcPhi{\funcG})\big.
\right\}\Big.\!\right).
\end{split}\label{b09b}
\end{align}
Furthermore, \cref{b03} (with
$H=(n+1)\left(\card{\funcCalL(\funcPhi{f})}-1\right)-1$ in the notation of \cref{b03})
ensures that
\begin{align}
\identity{\R}\in \funcCalR\left(\left\{\Phi\in\setCalN\colon \funcCalL(\Phi)=\neutralDim{(n+1)\left(\card{\funcCalL(\funcPhi{f})}-1\right)+1}\Big.\right\}\bigg.\!\right).
\end{align}
This, \eqref{b09b}, 
and \cref{m11b} (with $d_1=d$, $d_2=1$, $d_3=1$, $f=\identity{\R}$, $g=\funcG\big(\cdot+\sppr^{\theta}_{T}(\omega)-\sppr^{\theta}_{t}(\omega)\big)$, $\alpha=\neutralDim{(n+1)\left(\card{\funcCalL(\funcPhi{f})}-1\right)+1}$, and $\beta=\funcCalL(\Phi_\funcG)$ 
for  $\theta\in \Theta$, $t\in [0,T]$
in the notation of \cref{m11b}) show that 
for all $\theta\in \Theta$, $t\in [0,T]$ it holds that
\begin{align}\begin{split}
\funcG\big(\cdot+\sppr^{\theta}_{T}(\omega)-\sppr^{\theta}_{t}(\omega)\big)\in  \funcCalR\bigg(\bigg\{\Phi\in\setCalN\colon
\funcCalL(\Phi)=\neutralDim{(n+1)\left(\card{\funcCalL(\funcPhi{f})}-1\right)+1} \concat\funcCalL(\funcPhi{\funcG})
\bigg\}\bigg).\label{b09}
\end{split}
\end{align}
Next, the induction hypothesis implies
for all $\theta\in \Theta$, $t\in[0,T]$,
$ l\in [0,n]\cap\N_0$
 that
\begin{align}
{\bigU}_{l,M}^{\theta}(t,\cdot,\omega)=\funcCalR(\netflow{l,t}{\theta})\quad\text{and}\quad \funcCalL\left(\netflow{l,t}{\theta}\right)=\funcCalL\left(\netflow{l,0}{0}\right). 
\end{align}
This and 
\cref{p01} (with 
\begin{align}\begin{split}
&d=d,\quad m=1,\quad a=0,\quad b=\sppr_{\uniform_t^{\theta}(\omega)}^{\theta}(\omega)-
\sppr_{t}^{\theta}(\omega),\quad\text{and}\\ &\Psi=\netflow{l,\uniform_t^{\theta}(\omega)}{\eta}\quad 
\text{for}\quad  \theta,\eta\in \Theta, \quad t\in [0,T],\quad l\in [0,n]\cap\N_0 \end{split}
\end{align}
in the notation of \cref{p01}) imply that for all $\theta,\eta\in \Theta$, $t\in [0,T]$,  $l\in [0,n]\cap\N_0$ it holds that 
\begin{align}\begin{split}
&
 U_{l,M}^{\eta} \left(\uniform_t^{\theta}(\omega),\cdot+\sppr_{\uniform_t^{\theta}(\omega)}^{\theta}(\omega)-
\sppr_{t}^{\theta}(\omega),\omega
\right)\\
&=\left(\funcCalR\big(\netflow{l,\uniform_t^{\theta}(\omega)}{\eta}\big)\right)\!\!\left(\cdot+\sppr_{\uniform_t^{\theta}(\omega)}^{\theta}(\omega)-
\sppr_{t}^{\theta}(\omega)\right)
\\
&\in   \funcCalR\bigg(\bigg\{\Phi\in\setCalN\colon
\funcCalL(\Phi)= \funcCalL\left(\netflow{l,\uniform_t^{\theta}(\omega)}{\eta}\right)
\bigg\}\bigg)=   \funcCalR\bigg(\bigg\{\Phi\in\setCalN\colon
\funcCalL(\Phi)= \funcCalL\left(\netflow{l,0}{0}\right)
\bigg\}\bigg).
\end{split}\label{b10b}
\end{align}
Moreover, \cref{b03} (with
$H=(n-l)\left(\card{\funcCalL(\funcPhi{f})}-1\right) -1$ for 
$l\in [0,n-1]\cap\N_0$
 in the notation of \cref{b03})
ensures for all $l\in [0,n-1]\cap\N_0$ that
\begin{align}
\identity{\R}\in \funcCalR\left(\left\{\Phi\in\setCalN\colon \funcCalL(\Phi)=\neutralDim{(n-l)\left(\card{\funcCalL(\funcPhi{f})}-1\right) +1} \Big.\right\}\bigg.\!\right).
\end{align}
This, \eqref{b10b}, and \cref{m11b} (with 
\begin{align}\begin{split}
&d_1=d, \quad d_2=1, \quad d_3=1, \quad f=\identity{\R}, 
\quad \alpha=\neutralDim{(n-l)\left(\card{\funcCalL(\funcPhi{f})}-1\right) +1}, \quad\\
&\beta=\funcCalL\left(\netflow{l,0}{0}\right),\quad\text{and}\quad 
g= U_{l,M}^{\eta} \left(\uniform_t^{\theta}(\omega),\cdot+\sppr_{\uniform_t^{\theta}(\omega)}^{\theta}(\omega)-
\sppr_{t}^{\theta}(\omega),\omega
\right)\\
&\qquad\qquad
\text{for}\quad\eta,\theta\in \Theta,\quad t\in [0,T],\quad l\in [0,n-1]\cap\N_0\\
\end{split}
\end{align}
in the notation of \cref{m11b}) prove for all $\eta,\theta\in \Theta$, $t\in [0,T]$, $l\in [0,n-1]\cap\N_0$ that
\begin{align}
\begin{split}
& U_{l,M}^{\eta} \left(\uniform_t^{\theta}(\omega),\cdot+\sppr_{\uniform_t^{\theta}(\omega)}^{\theta}(\omega)-
\sppr_{t}^{\theta}(\omega),\omega
\right)
\\
&\in    \funcCalR\bigg(\bigg\{\Phi\in\setCalN\colon
\funcCalL(\Phi)= \neutralDim{(n-l)\left(\card{\funcCalL(\funcPhi{f})}-1\right) +1} 
\concat \funcCalL(\netflow{l,0}{0})
\bigg\}\bigg).
\end{split}\label{m10c}
\end{align}
This and \cref{m11b} (with 
\begin{align}\begin{split}
&d_1=d, \quad d_2=1, \quad d_3=1, \quad f=f,\quad  \alpha= \funcCalL(\Phi_f),\quad\\ &\beta=\neutralDim{(n-l)\left(\card{\funcCalL(\funcPhi{f})}-1\right) +1} 
\concat \funcCalL(\netflow{l,0}{0}),\quad\text{and}
\quad g=U_{l,M}^{\eta} \left(\uniform_t^{\theta}(\omega),\cdot+\sppr_{\uniform_t^{\theta}(\omega)}^{\theta}(\omega)-
\sppr_{t}^{\theta}(\omega),\omega
\right)\\
&\qquad\qquad\qquad\text{for}\quad \eta,\theta\in \Theta,\quad t\in [0,T], \quad l\in [0,n-1]\cap\N_0
\end{split}
\end{align}
in the notation of \cref{m11b}) assure for all $\eta,\theta\in \Theta$, $t\in [0,T]$, $l\in [0,n-1]\cap\N_0$ that
\begin{align}\begin{split}
&      \left(f\circ U_{l,M}^{\eta}\right) \left(\uniform_t^{\theta}(\omega),\cdot+\sppr_{\uniform_t^{\theta}(\omega)}^{\theta}(\omega)-
\sppr_{t}^{\theta}(\omega),\omega
\right) \\
&\in   \funcCalR\bigg(\bigg\{\Phi\in\setCalN\colon
\funcCalL(\Phi)=\funcCalL(\funcPhi{f})\concat \neutralDim{(n-l)\left(\card{\funcCalL(\funcPhi{f})}-1\right) +1} 
\concat \funcCalL(\netflow{l,0}{0})
\bigg\}\bigg).
\end{split}\label{b10}
\end{align}
Next, \eqref{b10b} (with $l=n$) and 
\cref{m11b} (with 
\begin{align}\begin{split}
&d_1=d, \quad d_2=1, \quad d_3=1, \quad
f=f,\quad 
\alpha=  \funcCalL(\Phi_f),\quad
\beta=\funcCalL\left(\netflow{n,0}{0}\right),\quad\text{and}\\
&
g=\left( U_{n,M}^{\eta}\right)
\left(\uniform_t^{\theta}(\omega),\cdot+\sppr_{\uniform_t^{\theta}(\omega)}^{\theta}(\omega)-
\sppr_{t}^{\theta}(\omega),\omega
\right)\quad \text{for}\quad
\eta,\theta\in \Theta,\quad t\in [0,T]
\end{split}
\end{align}
in the notation of \cref{m11b}) prove
for all $\eta,\theta\in \Theta$, $t\in [0,T]$ that
\begin{align}
\begin{split}
&    \left(f\circ U_{n,M}^{\eta}\right) \left(\uniform_t^{\theta}(\omega),\cdot+\sppr_{\uniform_t^{\theta}(\omega)}^{\theta}(\omega)-
\sppr_{t}^{\theta}(\omega),\omega
\right) \\
&\in   \funcCalR\bigg(\bigg\{\Phi\in\setCalN\colon
\funcCalL(\Phi)=\funcCalL(\funcPhi{f})
\concat \funcCalL(\netflow{n,0}{0})
\bigg\}\bigg).
\end{split}\label{b10c}
\end{align}
Furthermore, the definition of $\concat$ in \eqref{b25} and 
the fact that
\begin{align}
\forall \,
l\in [0,n]\cap\N_0\colon 
\card{\funcCalL(\netflow{l,0}{0})}=l \left(\cardL{\funcPhi{f}}-1\right)+
\cardL{\funcPhi{\funcG}}\end{align} in 
the induction hypothesis imply that
\begin{align}
\begin{split}
&\card{\neutralDim{(n+1)\left(\card{\funcCalL(\funcPhi{f})}-1\right)+1}\concat \funcCalL(\funcPhi{\funcG})}\\
&=\Big[(n+1)\Big(\card{\funcCalL(\funcPhi{f})}-1\Big)+1\Big]+\cardL{\funcPhi{\funcG}}-1\\
&=(n+1)\Big(\card{\funcCalL(\funcPhi{f})}-1\Big)+\cardL{\funcPhi{\funcG}},
\end{split}
\end{align}
that
\begin{align}\begin{split}
&
\card{ 
\funcCalL(\funcPhi{f})
  \concat \funcCalL (\netflow{n,0}{0}) }
=
\card{ 
\funcCalL(\funcPhi{f})}+\card{ \funcCalL (\netflow{n,0}{0})}
-1\\
&=
\card{ 
\funcCalL(\funcPhi{f})}+\Big[n\Big(\!\cardL{\funcPhi{f}}-1\Big)+\cardL{\funcPhi{\funcG}}\Big]
-1\\
&=
(n+1)\Big(\!\card{\funcCalL(\funcPhi{f})}-1\Big)+\cardL{\funcPhi{\funcG}},\end{split}
\end{align}
and for all 
$l\in [0,n-1]\cap\N_0$ that
\begin{align}\begin{split}
&
\card{ 
\funcCalL(\funcPhi{f})
\concat\neutralDim{(n-l)\left(\card{\funcCalL(\funcPhi{f})}-1\right) +1}  \concat \funcCalL (\netflow{l,0}{0}) 
}\\
&= \card{ \funcCalL(\funcPhi{f})}+\card{\neutralDim{(n-l)\left(\card{\funcCalL(\funcPhi{f})}-1\right) +1}}+\card{ \funcCalL (\netflow{l,0}{0}) }-2\\
&=\card{ \funcCalL(\funcPhi{f})}+\Big[(n-l)\left(\Big.\!\card{\funcCalL(\funcPhi{f})}-1\right)+1 \Big]  \\&\qquad\qquad+
\Big[l \left(\cardL{\funcPhi{f}}-1\Big.\!\right)+
\cardL{\funcPhi{\funcG}}\Big]-2\\
&=\card{ \funcCalL(\funcPhi{f})}+ n\left(\Big.\!\card{\funcCalL(\funcPhi{f})}-1\right)+\cardL{\funcPhi{\funcG}}-1\\
&= (n+1)\left(\Big.\!\card{\funcCalL(\funcPhi{f})}-1\right)+\cardL{\funcPhi{\funcG}}.
\end{split}\label{b11}\end{align} 
This shows, roughly speaking, that the functions in \eqref{b09},
\eqref{b10c}, and \eqref{b10} can be represented by networks with the same depth
(i.e.\ number of layers): $(n+1)(\card{\funcCalL(\funcPhi{f})}-1)+\cardL{\funcPhi{\funcG}}$. Hence, \cref{b01} 
and \eqref{t27b} imply that
there exists a family
$(\netflow{n+1,t}{\theta})_{\theta\in \Theta, t\in [0,T]}\subseteq\setCalN$  such that  for all $\theta\in \Theta$, $t\in [0,T]$, $x\in\R^d$  it holds that
\begin{equation}
\begin{split}
&\left(\funcCalR(\netflow{n+1,t}{\theta})\right)(x) \\
&=
  \frac{1}{M^{n+1}}
 \sum_{i=1}^{M^{n+1}} 
      \funcG\big(x+\sppr^{(\theta,0,-i)}_{T}(\omega)-\sppr^{(\theta,0,-i)}_{t}(\omega)\big)
 \\
 &\qquad+
   \frac{(T-t)}{M}
    \sum_{i=1}^{M}
      \left(f\circ{\bigU}_{n,M}^{(\theta,n,i)}\right)\left(\uniform_t^{(\theta,n,i)}(\omega),x+\sppr_{\uniform_t^{(\theta,n,i)}(\omega)}^{(\theta,n,i)}(\omega)-
\sppr_{t}^{(\theta,n,i)}(\omega),\omega
\right)\\
 &\qquad+
  \sum_{l=0}^{n-1} \frac{(T-t)}{M^{n+1-l}}
    \sum_{i=1}^{M^{n+1-l}}
      \left(f\circ{\bigU}_{l,M}^{(\theta,l,i)}\right)\left(\uniform_t^{(\theta,l,i)}(\omega),x+\sppr_{\uniform_t^{(\theta,l,i)}(\omega)}^{(\theta,l,i)}(\omega)-
\sppr_{t}^{(\theta,l,i)}(\omega),\omega
\right)\\
&\qquad-\sum_{l=1}^{n} \frac{(T-t)}{M^{n+1-l}}
    \sum_{i=1}^{M^{n+1-l}}
\left(f\circ {\bigU}_{l-1,M}^{(\theta,-l,i)}  \right) \left(\uniform_t^{(\theta,l,i)}(\omega),x+\sppr_{\uniform_t^{(\theta,l,i)}(\omega)}^{(\theta,l,i)}(\omega)-
\sppr_{t}^{(\theta,l,i)}(\omega),\omega
\right)
   \\
&= {\bigU}_{n+1,M}^{\theta}(t,x,\omega),
\end{split}\label{b16}
\end{equation}
that
\begin{align}\label{b19}
\card{
\funcCalL(\netflow{n+1,t}{\theta})}=
(n+1)(\card{\funcCalL(\funcPhi{f})}-1)+\cardL{\funcPhi{\funcG}},
\end{align}
and that
\begin{align}\begin{split}
\funcCalL(\netflow{n+1,t}{\theta})
&=
\left(
\dimsum_{i=1}^{M^{n+1}}\left[\neutralDim{(n+1)\left(\card{\funcCalL(\funcPhi{f})}-1\right)+1} \concat\funcCalL(\funcPhi{\funcG})\Big.\right]
\right)
 \dimsum\Bigg( \dimsum_{i=1}^{M}
\left(\funcCalL\left(\funcPhi{f}\right)
\concat \funcCalL\left(\netflow{n,0}{0}\right)\right)\Bigg)
\\
&\quad \dimsum \Bigg(\dimsum_{l=0}^{n-1}\dimsum_{i=1}^{M^{n+1-l}}\bigg[
\left(\funcCalL(\funcPhi{f})\concat \neutralDim{(n-l)(\card{\funcCalL(\funcPhi{f})}-1) +1 }
\concat \funcCalL(\netflow{l,0}{0})\right)\Bigg)
\\
&\quad \dimsum\Bigg(
\dimsum_{l=1}^{n}\dimsum_{i=1}^{M^{n+1-l}}
\left(\funcCalL(\funcPhi{f})\concat \neutralDim{(n-l+1)(\card{\funcCalL(\funcPhi{f})}-1) +1 }
\concat \funcCalL(\netflow{l-1,0}{0})\right)\bigg]\Bigg)
.\end{split}\label{b17}
\end{align}
This shows for all $t_1,t_2\in [0,T]$, $\theta_1,\theta_2\in\Theta$
that  
\begin{align}
\funcCalL\left(\netflow{n+1,t_1}{\theta_1}\right)=\funcCalL\left(\netflow{n+1,t_2}{\theta_2}\right).\label{b22}
\end{align}
Furthermore, \eqref{b17}, the triangle inequality (see \cref{b15}), and
the fact that
\begin{align}
\forall \, l\in [0,n]\cap\N_0\colon\supnorm{\funcCalL(\netflow{l,0}{0} )}\leq c(3 M)^l\label{b22b}
\end{align}
 in
the induction hypothesis show
 for all $\theta\in \Theta$, $t\in [0,T]$ that
\begin{align}\begin{split}\label{b22c}
\supnorm{\funcCalL(\netflow{n+1,t}{\theta})}&\leq 
\sum_{i=1}^{M^{n+1}}\supnorm{ \neutralDim{(n+1)(\card{\funcCalL(\funcPhi{f})}-1)+1} \concat\funcCalL(\funcPhi{\funcG})}+\sum_{i=1}^{M}\supnorm{\funcCalL(\funcPhi{f})
\concat \funcCalL(\netflow{n,0}{0})}\\
&\quad+ \sum_{l=0}^{n-1}\sum_{i=1}^{M^{n+1-l}}
\supnorm{\funcCalL(\funcPhi{f})\concat \neutralDim{(n-l)(\card{\funcCalL(\funcPhi{f})}-1) +1 }
\concat \funcCalL(\netflow{l,0}{0})}\\
&\quad+ \sum_{l=1}^{n}\sum_{i=1}^{M^{n+1-l}}\supnorm{\funcCalL(\funcPhi{f})\concat \neutralDim{(n-l+1)(\card{\funcCalL(\funcPhi{f})}-1) +1 }
\concat \funcCalL(\netflow{l-1,0}{0})}.
\end{split}
\end{align} 
Note that for all $H_1,H_2,\alpha_0,\ldots,\alpha_{H_1+1},\beta_0,\ldots, \beta_{H_2+1}\in \N$, $\alpha,\beta \in \setCalD$ with $\alpha=(\alpha_0,\ldots,\alpha_{H_1+1})$, $\beta=(\beta_0,\ldots, \beta_{H_2+1})$, $\alpha_0=\beta_{H_2+1}=1$ it holds that $\supnorm{\alpha\concat\beta}\leq \max\{\supnorm{\alpha},\supnorm{\beta},2\}$ (see~\eqref{b25}). This, \cref{b22c}, 
the fact that
$\forall \, H\in \N\colon \supnorm{\neutralDim{H+2}}=2$ (see \eqref{b26d}),
\eqref{b12}, and \eqref{b22b} prove that
\begin{align}
\begin{split}
&\supnorm{\funcCalL(\netflow{n+1,t}{\theta})}\\
&\leq \left[
\sum_{i=1}^{M^{n+1}}c \right]+
\left[ \sum_{i=1}^{M}c({3} M)^n\right]+
\left[ \sum_{l=0}^{n-1}\sum_{i=1}^{M^{n+1-l}}c({3} M)^l\right]
+\left[ \sum_{l=1}^{n}\sum_{i=1}^{M^{n+1-l}}c({3} M)^{l-1}\right]
\\
&= 
M^{n+1}c+Mc(3M)^{n}+\left[\sum_{l=0}^{n-1}M^{n+1-l}c(3M)^l\right]+\left[
\sum_{l=1}^{n}M^{n+1-l}c(3M)^{l-1}\right]
\\
&= 
M^{n+1}c\left[1+3^n+\sum_{l=0}^{n-1}3^l+\sum_{l=1}^{n}3^{l-1}\right]=
M^{n+1}c\left[1+\sum_{l=0}^{n}3^l+\sum_{l=1}^{n}3^{l-1}\right]
\\
&\leq 
 cM^{n+1}\left[1+2\sum_{l=0}^{n} {3} ^l\right]= cM^{n+1}\left[1+2\frac{{3}^{n+1}-1}{{3}-1}\right]
= c({3} M)^{n+1}.
\end{split}
\label{b18}
\end{align}
Combining \eqref{b16}, \eqref{b19}, \eqref{b22}, and \eqref{b18} completes the induction step. Induction hence establishes \cref{it1}--\cref{it3}. 
The proof of \cref{b26} is thus completed.
\end{proof}

\subsection{Deep neural network approximations for the PDE nonlinearity}
\begin{lemma}[DNN interpolation]\label{xn11}Assume \cref{m07},
let $N\in\N$, $a_0,a_1,\ldots, a_{N-1},\gp_0,\gp_1,\ldots,\gp_N\in \R$ satisfy that $\gp_0<\gp_1<\ldots<\gp_N$,
let
$f\colon \R\to\R$
be a function,
assume for all
$x\in (-\infty,\gp_0]$ that $f(x)=f(\gp_0)$, 
 assume for all
$n\in [0,N-1]\cap \Z$,
$x\in(\gp_n,\gp_{n+1}]$ that $f(x)=f(\gp_n)+a_n(x-\gp_n)$, and
assume for all
$x\in (\gp_N,\infty)$ that $f(x)=f(\gp_N)$.
Then it holds that
\begin{align}
f\in \funcCalR(\{\Phi \in\setCalN\colon \funcCalL(\Phi)=(1,N+1,1)\}).\label{x06}
\end{align}
\end{lemma}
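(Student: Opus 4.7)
The plan is to exhibit an explicit one-hidden-layer ReLU network with $N+1$ hidden neurons whose realization equals $f$. The underlying idea is the standard representation of a continuous piecewise linear function as a constant plus a linear combination of shifted ReLUs, where the coefficient of each ReLU records the jump in slope at the corresponding break point.

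Concretely, I would set
$
W_1 = (1,1,\ldots,1)^{\!\top}\in \R^{(N+1)\times 1},\;
B_1 = -(\gp_0,\gp_1,\ldots,\gp_N)^{\!\top}\in \R^{N+1},
$
and, writing $a_{-1}=a_N=0$ for convenience,
$
W_2 = \bigl(a_0-a_{-1},\; a_1-a_0,\; \ldots,\; a_{N-1}-a_{N-2},\; a_N-a_{N-1}\bigr)\in \R^{1\times(N+1)},\;
B_2 = f(\gp_0)\in \R.
$
With $\Phi=((W_1,B_1),(W_2,B_2))\in\setCalN$ the definition of $\funcCalL$ in \eqref{m07c} immediately gives $\funcCalL(\Phi)=(1,N+1,1)$, which is the target architecture in \eqref{x06}.

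The remaining task is to verify that $(\funcCalR(\Phi))(x)=f(x)$ for every $x\in\R$. By \eqref{m07d} and the definition of $\funcBoldA{N+1}$, the realization equals
\begin{equation*}
(\funcCalR(\Phi))(x)
= f(\gp_0) + \sum_{n=0}^{N}(a_n-a_{n-1})\,\max\{x-\gp_n,0\}.
\end{equation*}
I would check the identity $(\funcCalR(\Phi))(x)=f(x)$ separately on the regions $(-\infty,\gp_0]$, $(\gp_n,\gp_{n+1}]$ for $n\in\{0,1,\ldots,N-1\}$, and $(\gp_N,\infty)$. On $(-\infty,\gp_0]$ every ReLU vanishes, so $(\funcCalR(\Phi))(x)=f(\gp_0)=f(x)$. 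On $(\gp_n,\gp_{n+1}]$ exactly the first $n+1$ ReLUs are active, so the slope is the telescoping sum $\sum_{k=0}^{n}(a_k-a_{k-1})=a_n$, and a straightforward induction on $n$ using the definition $f(\gp_{n})=f(\gp_{n-1})+a_{n-1}(\gp_{n}-\gp_{n-1})$ gives that the constant parts match $f(\gp_n)-a_n\gp_n$. Finally on $(\gp_N,\infty)$ all $N+1$ ReLUs are active and the telescoping sum of the slope coefficients is $a_N-a_{-1}=0$, and the value at $\gp_N$ is by the previous case $f(\gp_N)$, so $(\funcCalR(\Phi))(x)=f(\gp_N)=f(x)$.

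There is essentially no hard step: the only thing to be careful about is the correct bookkeeping of the boundary coefficients $a_{-1}=0$ and $a_N=0$, which enforce that $f$ is constant on the two unbounded tails. Once the identity is verified, \eqref{x06} follows from $\funcCalL(\Phi)=(1,N+1,1)$ and $\funcCalR(\Phi)=f$, completing the proof.
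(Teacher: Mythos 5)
Your proposal is correct and coincides with the paper's own proof: the same weights $W_1=(1,\ldots,1)^{\top}$, $B_1=-(\gp_0,\ldots,\gp_N)^{\top}$, $W_2=(c_0,\ldots,c_N)$ with $c_n=a_n-a_{n-1}$ (and $a_{-1}=a_N=0$), $B_2=f(\gp_0)$, and the same piecewise verification via the telescoping slope sums. No substantive difference.
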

\begin{proof}[Proof of \cref{xn11}]
Throughout this proof let $a_{-1}=0$ and $a_N=0$,
let $c_n$, $n\in [0,N]\cap\Z$, be the real numbers which satisfy 
for all
$
 n\in [0,N]\cap\Z$ that $ c_n=a_{n}-a_{n-1}$,
let 
$W_1\in \R^{(N+1)\times 1}$, $B_1\in \R ^{N+1}$, $W_2\in\R^{1\times(N+1)}$, $B_2\in \R$, $\Phi\in\setCalD$  be given by
\begin{align}
&W_1 = \begin{pmatrix}
1\\1\\\vdots\\1
\end{pmatrix} ,\quad 
B_1=\begin{pmatrix}
-\gp_0\\-\gp_1\\\vdots\\-\gp_N
\end{pmatrix} ,\quad 
W_2= \begin{pmatrix}
c_0&c_1&\ldots&c_N
\end{pmatrix},\quad B_2= f(\gp_0)\label{x01}
\end{align}
and
\begin{align}
\Phi= ((W_1,B_1),(W_2,B_2)),\label{x02}
\end{align}
and let $g\colon \R\to\R$ satisfy for all $x\in\R$ that
\begin{align}
g(x)=f(\gp_0)+\sum_{k=0}^{N}c_k\max\{x-\gp_k,0\}.\label{x04}
\end{align}
First, observe that the fact that
$\forall \,n\in [0,N-1]\cap\Z\colon   \gp_n<\gp_{n+1}$ and the fact that
$\forall\,
 n\in [0,N]\cap\Z\colon   a_n= \sum_{k=0}^{n}c_k$
then show for all
$n\in [0,N-1]\cap\Z$, $x\in (\gp_n,\gp_{n+1}]$ that
\begin{align}\begin{split}
g(x)-g(\gp_n)&= \left[\sum_{k=0}^{N}c_k\Big(\max\{x-\gp_k,0\}-\max\{\gp_{n}-\gp_k,0\}\Big)\right]\\
&=\sum_{k=0}^{n}c_k [(x-\gp_k)-(\gp_{n}-\gp_k)]= \sum_{k=0}^{n}c_k (x-\gp_{n})=a_n(x-\gp_n).\end{split}
\end{align}
This
shows
for all
$n\in [0,N-1]\cap\Z$ that
  $g $ is affine linear on the interval $ (\gp_n,\gp_{n+1}]$.
This, 
the fact that for all
$n\in [0,N-1]\cap\Z$ it holds that
  $f $ is affine linear on the interval $ (\gp_n,\gp_{n+1}]$, 
the fact that $\forall\, x\in (-\infty,\gp_0]\colon f(x)= g(x)=a_0$, and an induction argument imply 
for all $x\in (-\infty,\gp_N]$ 
that $f(x)=g(x)$. Furthermore, 
\eqref{x04},
the fact that
$\forall \,n\in [0,N-1]\cap\Z\colon   \gp_n<\gp_{n+1}$, and the fact that
$\sum_{k=0}^{N}c_k=0$
imply for all $x\in (\gp_N,\infty)$ that
\begin{align}\begin{split}
g(x)-g(\gp_N)&= \left[\sum_{k=0}^{N}c_k\Big(\max\{x-\gp_k,0\}-\max\{\gp_{N}-\gp_k,0\}\Big)\right]\\
&=\sum_{k=0}^{N}c_k [(x-\gp_k)-(\gp_{N}-\gp_k)]= \sum_{k=0}^{N}c_k (x-\gp_{N})=0.\end{split}
\end{align}
This shows  for all $x\in (\gp_N,\infty)$ that $g(x)=g(\gp_N)$.
This, the fact that $\forall \,x\in (\gp_N,\infty)\colon f(x)=f(\gp_N)$,
and the fact that
$\forall\,x\in (-\infty,\gp_N]\colon f(x)=g(x)$, and \eqref{x04}
prove for all $x\in\R$ that 
\begin{align}
f(x)=g(x)=f(\gp_0)+\sum_{k=0}^{N}c_k\max\{x-\gp_k,0\}.\label{x04b}
\end{align}
Next, the definition of $\funcCalR$ and $\funcCalL$ (see
\eqref{m07c} and \eqref{m07d}), \eqref{x01},  \eqref{x02}, and \eqref{x04b} imply that
for all $x\in\R$ it holds that $\funcCalL(\Phi)=(1,N+1,1)$ and
\begin{align}\begin{split}
&(\funcCalR(\Phi))(x)=
W_2(
\funcBoldA{N+1}(W_1x+B_1))+B_2\\&= 
\begin{pmatrix}
c_0&c_1&\ldots&c_N
\end{pmatrix}
\begin{pmatrix}
\max\{x-\gp_0,0\}\\
\max\{x-\gp_1,0\}\\
\vdots\\
\max\{x-\gp_N,0\}
\end{pmatrix}+f(\gp_0)=
f(\gp_0)+\sum_{k=0}^{N}c_k\max\{x-\gp_k,0\}=f(x).\end{split}
\end{align}
This establishes \eqref{x06}.
The proof of \cref{xn11} is thus completed.
\end{proof}
\begin{lemma}\label{xn01}
Let $L\in [0,\infty)$, $N\in\N$,
$a\in\R$, $b\in (a,\infty)$,
$\gp_0, \gp_1,\ldots, \gp_N\in \R$ satisfy for all $n\in [0,N]\cap\Z$ that $\gp_n=a+\frac{(b-a)n}{N}$,
let
$f\colon \R\to\R$ satisfy for all $x,y\in\R$ that
\begin{align}\label{xn02}
|f(x)-f(y)|\leq L|x-y|,
\end{align}
and let $g\colon \R \to \R$ satisfy
for all $x\in \R$, $n\in [0,N-1]\cap\Z$ that 
\begin{equation}\label{xn03}
g(x)=
\begin{cases}
f(\gp_0) &
\colon x\in (-\infty,\gp_0]\\[1ex]
\frac{f(\gp_n)(\gp_{n+1}-x)
+f(\gp_{n+1})(x-\gp_{n})}{\gp_{n+1}-\gp_n} & \colon x\in (\gp_n,\gp_{n+1}]\\[1ex]
f(\gp_N) & \colon  x\in (\gp_N,\infty).
\end{cases}
\end{equation}
Then
\begin{enumerate}[(i)]
\item \label{xn08b}it holds for all $n\in [0,N]\cap\Z$ that $g(\gp_n)=f(\gp_n)$,
\item\label{xn07} it holds for all $x,y\in\R$ that $|g(x)-g(y)|\leq L|x-y|$, and
\item\label{xn08} it holds for all $x\in[a,b]$ that $|g(x)-f(x)|\leq \frac{2L(b-a)}{N}$.
\end{enumerate}
\end{lemma}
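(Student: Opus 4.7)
The plan is to establish the three conclusions in sequence, with \cref{xn08b} feeding into \cref{xn07} and both feeding into \cref{xn08}. All three claims are essentially direct computations; no machinery beyond the triangle inequality is needed.

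First I would prove \cref{xn08b} by direct substitution of $x=\gp_n$ into the case definition of $g$ in \eqref{xn03}. The case $x\in(-\infty,\gp_0]$ immediately yields $g(\gp_0)=f(\gp_0)$. For each $n\in [1,N]\cap\Z$ the value $x=\gp_n$ lies in the interval $(\gp_{n-1},\gp_n]$, and the interpolation formula evaluates to $(f(\gp_{n-1})\cdot 0 + f(\gp_n)(\gp_n-\gp_{n-1}))/(\gp_n-\gp_{n-1})=f(\gp_n)$.

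Next I would prove \cref{xn07}. On each interval $(\gp_n,\gp_{n+1}]$, the function $g$ is affine with slope $(f(\gp_{n+1})-f(\gp_n))/(\gp_{n+1}-\gp_n)$, whose absolute value is at most $L$ by \eqref{xn02}. Outside $[a,b]$ the function $g$ is constant, hence of slope $0$. Combined with \cref{xn08b}, this shows that $g$ is continuous on all of $\R$ and piecewise $L$-Lipschitz on the pieces $(-\infty,\gp_0]$, $(\gp_n,\gp_{n+1}]$ for $n\in [0,N-1]\cap\Z$, and $(\gp_N,\infty)$. To upgrade this to a global $L$-Lipschitz bound, for arbitrary $x<y$ in $\R$ I would insert every grid point $\gp_n\in (x,y)$ and apply the triangle inequality to the consecutive pieces, obtaining $|g(y)-g(x)|\leq L(y-x)$.

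Finally, for \cref{xn08}, I would fix $x\in [a,b]$, pick $n\in [0,N-1]\cap\Z$ with $x\in [\gp_n,\gp_{n+1}]$, and combine \cref{xn08b} (giving $g(\gp_n)=f(\gp_n)$), \cref{xn07}, and \eqref{xn02} via the triangle inequality:
\begin{align*}
|g(x)-f(x)|\leq |g(x)-g(\gp_n)|+|f(\gp_n)-f(x)|\leq 2L|x-\gp_n|\leq \frac{2L(b-a)}{N}.
\end{align*}
The main (mild) obstacle is the globalization step in \cref{xn07}: piecewise $L$-Lipschitzness implies global $L$-Lipschitzness only once one knows that $g$ is continuous at the junction grid points, which is precisely what \cref{xn08b} provides. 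All other steps are routine arithmetic.
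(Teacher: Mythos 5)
Your proposal is correct and follows essentially the same route as the paper: item (i) by substitution, item (ii) by bounding the slope on each cell and chaining across the grid points (the paper formalizes the chaining with left/right rounding maps $\ell,r$ and a two-case split rather than inserting all intermediate nodes, but the argument is the same), and item (iii) by the identical two-term triangle inequality through $g(\gp_n)=f(\gp_n)$. No gaps.
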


\begin{proof}[Proof of \cref{xn01}]
Throughout this proof 
let $r,\ell\colon \R \to\R$ satisfy
for all $x\in \R \setminus (a,b]$ that
\begin{align}\label{xn04}
 r(x)=\ell(x)=x
\end{align}
and which satisfies for all $n\in [0,N-1]\cap\Z$, $x\in (\gp_n,\gp_{n+1}]$ that
\begin{align}
r(x)= \gp_{n+1}\quad\text{and}\quad
\ell(x)=  \gp_{n}.
\end{align}
\cref{xn08b} follows from \cref{xn03}.
Next, observe that for all $x,y\in (a,b]$ with $x\leq y$ and $\ell(y)<r(x)$ it holds that $r(x)=r(y)$ and $\ell(x) =\ell(y)$. This, 
\eqref{xn04},  \eqref{xn03}, and \eqref{xn02}
show that
for all $x,y\in\R$ with 
$x\leq y$ and $\ell(y)<r(x)$ it holds that 
$x,y\in (a,b]$,
$r(x)=r(y)$, $\ell(x) =\ell(y)$, and
\begin{align}
|g(x)-g(y)|= \left|\frac{f(r(x))-f(\ell(x))}{r(x)-\ell(x)} (x-y)\right|\leq L |x-y|.\label{xn06}
\end{align}
Furthermore,  \eqref{xn03}, \eqref{xn02}, and
the fact that $\forall\,x\in\R \colon \ell(x)\leq x\leq r(x)$ imply 
for all $x\in (a,b]$ that
\begin{align}\begin{split}
|g(x)-g(r(x))|
&=\left| \frac{f(\ell(x))-f(r(x))}{\ell(x)-r(x)} (x-\ell(x))+f(\ell(x))-f(r(x))\right|\\
&= \left| \frac{(f(\ell(x))-f(r(x)))(x-r(x))}{\ell(x)-r(x)}\right|\leq L|x-r(x)|=L(r(x)-x),
\end{split}\end{align}
and
\begin{align}\begin{split}
g(x)-g(\ell(x))&=\left| \frac{f(\ell(x))-f(r(x))}{\ell(x)-r(x)} (x-\ell(x))+f(\ell(x))-f(\ell(x))\right|\\
&=\left| \frac{f(\ell(x))-f(r(x))}{\ell(x)-r(x)} (x-\ell(x))\right|\leq L|x-\ell(x)|=L(x-\ell(x)).
\end{split}\end{align}
This and \eqref{xn04}
show for all 
$x\in \R$ that
\begin{align}\label{xn09}
|g(x)-g(r(x))|\leq L(x-r(x)) \quad\text{and}\quad
|g(x)-g(\ell(x))|\leq L(x-\ell(x)) .
\end{align}
The triangle inequality therefore shows for all $x,y\in \R$ with 
$x\leq y$ and
$r(x)\leq \ell (y)$ that
\begin{align}\begin{split}
|g(x)-g(y)|&\leq | g(x)-g(r(x))|+|g(r(x))-g(\ell(y))|+|g(\ell(y))-g(y)|\\
&\leq L (r(x)-x)+ L(\ell(y)-r(x))+ L (y-\ell(y))= L(y-x)= L|y-x|.\end{split}
\end{align}
This and \eqref{xn06}
show for all $x,y\in\R$ with $x\leq y$ that $|g(x)-g(y)|\leq L|x-y| $. Symmetry hence establishes \cref{xn07}.
Next, the fact that $\forall\,x\in\R\colon  g(\ell(x))=f(x)$, the triangle inequality,
\eqref{xn02}, \eqref{xn09}, and the fact that
$\forall\,x\in [a,b]\colon 0\leq x-\ell (x)\leq 1/N$
 imply for all $x\in[a,b]$ that
\begin{align}\begin{split}
|g(x)-f(x)|&= |g(x)-f(\ell(x))+f(\ell(x))-f(x)|\\
&= |g(x)-g(\ell(x))+f(\ell(x))-f(x)|\\
&\leq 
 |g(x)-g(\ell(x))|+|f(\ell(x))-f(x)|\leq 2L (x-\ell(x))\leq 2L(b-a)/N.
\end{split}\end{align}
This establishes \cref{xn08}.
The proof of \cref{xn01} is thus completed.
\end{proof}

\begin{corollary} \label{z01}
Assume \cref{m07}, let $\epsilon \in (0,1)$, $L\in [0,\infty)$, $q\in (1,\infty)$,
and let
$f\colon \R\to\R$ satisfy for all $x,y\in\R$ that
$|f(x)-f(y)|\leq L|x-y|. $
Then there exists a function $g\colon \R\to\R$ such that
\begin{enumerate}[(i)]
\item \label{z01a}it holds for all $x,y\in\R$ that $|g(x)-g(y)|\leq L|x-y|$,
\item \label{z01b} it holds for all $x\in\R$ that
$|f(x)-g(x)|\leq \epsilon (1+|x|^q)$, and
\item \label{z01c} it holds that
\begin{equation}
g\in \funcCalR\left(\left\{\Phi\in\setCalN\colon \funcCalL(\Phi)\in \N^3 \text{ and } \supnorm{\funcCalL(\Phi)}\leq \frac{16\left[\max\!\left\{1,\left|L\big(4L+2|f(0)|\big)\right|^\frac{1}{(q-1)}\right\}\right]}{\epsilon ^{\frac{q}{(q-1)}}}\right\}\right).
\end{equation}
\end{enumerate}
\end{corollary}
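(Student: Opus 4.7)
The plan is to construct $g$ as the clipped piecewise linear interpolant of $f$ on a uniform mesh of $[-R,R]$ for a suitably chosen $R$ and then to represent $g$ as a one-hidden-layer ReLU network via \cref{xn11}. Concretely, I would set $R := \max\{1, ((4L+2|f(0)|)/\epsilon)^{1/(q-1)}\}$, pick $N\in\N$ as the smallest integer with $N\ge 4LR/\epsilon$, choose mesh points $\gp_n := -R+2Rn/N$ for $n\in\{0,\ldots,N\}$, and invoke \cref{xn01} with $a=-R$ and $b=R$. The $L$-Lipschitz estimate \cref{z01a} is then immediate from \cref{xn07}.

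For the approximation bound \cref{z01b}, I would split into the interior and the tail. For $x\in[-R,R]$, \cref{xn08} gives $|g(x)-f(x)| \le 2L(2R)/N \le \epsilon \le \epsilon(1+|x|^q)$. For $|x|>R$, the clipping in \cref{xn01} forces $g(x)=f(R\operatorname{sgn}(x))$, so $|g(x)-f(x)| \le L(|x|-R) \le L|x|$; the choice of $R$ yields $\epsilon R^{q-1} \ge 4L+2|f(0)| \ge L$, whence $\epsilon|x|^q \ge \epsilon|x|R^{q-1} \ge L|x|$ for $|x|\ge R$, establishing \cref{z01b}. For \cref{z01c}, \cref{xn11} supplies some $\Phi\in\setCalN$ with $\funcCalR(\Phi)=g$ and $\funcCalL(\Phi)=(1,N+1,1)\in\N^3$, so that $\supnorm{\funcCalL(\Phi)}=N+1$.

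The main obstacle will be the final numerical bound $N+1\le \frac{16\max\{1,(L(4L+2|f(0)|))^{1/(q-1)}\}}{\epsilon^{q/(q-1)}}$. Plugging in $R$ and using $N+1\le 4LR/\epsilon+2$ reduces this to comparing $4L\cdot(4L+2|f(0)|)^{1/(q-1)}$ with $16L^{1/(q-1)}(4L+2|f(0)|)^{1/(q-1)}$ and absorbing the additive $+2$ using $\epsilon<1$ (so that $\epsilon^{-q/(q-1)}\ge 1$) together with the $\max\{1,\cdot\}$ factor. The inequality $4L\le 16L^{1/(q-1)}$ is straightforward when $L^{(q-2)/(q-1)}\le 4$ and in the regime $L(4L+2|f(0)|)\le 1$ where the $\max$ saturates at $1$ and the constant $16$ absorbs all slack. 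The delicate regime is large $L$, where I would refine the interior estimate by either using the sharp interpolation bound $L(b-a)/(2N)$ (which the construction in \cref{xn01} in fact satisfies, up to the factor of $4$ lost in its proof) in place of the stated $2L(b-a)/N$, or by switching to a non-uniform mesh with spacing $\sim 2\epsilon(1+\gp_n^q)/L$ so that the interior count scales like $L/\epsilon\cdot\int_0^\infty(1+\xi^q)^{-1}\,d\xi$ rather than $L\cdot R/\epsilon$, yielding the correct power $L^{1/(q-1)}$ in the final bound.
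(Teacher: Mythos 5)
Your construction is the paper's construction: the same $R$ (the paper takes $R=((4L+2|f(0)|)/\epsilon)^{1/(q-1)}$ exactly, without the $\max$ with $1$), essentially the same $N$ (the paper uses $N=\inf\{n\ge 2\colon 4LR/n\le\epsilon\}$), the same clipped piecewise-linear interpolant obtained from \cref{xn01}, and the same appeal to \cref{xn11} for the $(1,N+1,1)$ architecture. Your proofs of \cref{z01a} and \cref{z01b} are complete and correct; for the tail you argue directly via $|f(x)-g(x)|\le L(|x|-R)\le L|x|\le\epsilon|x|^q$, whereas the paper bounds $|f(x)-g(x)|\le 2L(|x|+R)+2|f(0)|$ globally and divides by $1+|x|^q$ -- both are fine.

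The gap you flag in \cref{z01c} is real, but you should know it is also present, unacknowledged, in the paper: the paper derives $N\le 8LR/\epsilon=8L(4L+2|f(0)|)^{1/(q-1)}\epsilon^{-q/(q-1)}$ and then simply declares \cref{z01c} established, which tacitly requires $8L\lesssim 16L^{1/(q-1)}$ and fails for $q>2$ and large $L$ (and also for $q<2$ with small $L$ and large $|f(0)|$), exactly the regimes you worry about. Neither of your repairs closes it: sharpening the interpolation error from $2L(b-a)/N$ to $L(b-a)/(2N)$ only improves the numerical constant and leaves the exponent of $L$ at $1$, and the non-uniform mesh yields $N\sim C_qL\epsilon^{-1}$, which is again not dominated by $L^{1/(q-1)}\epsilon^{-q/(q-1)}$ (take $q>3$ and $L$ large). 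In fact no construction can close it: for a sawtooth $f$ with slopes $\pm L$ and period of order $\epsilon/L$, any $g$ satisfying \cref{z01b} must have of order $L/\epsilon$ breakpoints on $[-1,1]$, while a network $\Phi$ with $\funcCalL(\Phi)\in\N^3$ and $\supnorm{\funcCalL(\Phi)}\le K$ realizes a function with at most $K$ breakpoints, and for large $L$ and large $q$ the right-hand side in \cref{z01c} is only of order $\epsilon^{-1}$. So \cref{z01c} as stated is not provable in general; it does follow from the $N\le 8LR/\epsilon$ computation when $q=2$ (where $1/(q-1)=1$, so $8L+\text{slack}\le 16\max\{1,L(4L+2|f(0)|)\}$ works out), and $q=2$ is the only case invoked downstream in the proof of \cref{cor:main_thm}. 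In short: your items (i) and (ii) are done, item (iii) is not, and the obstruction you identified is a genuine defect of the statement rather than of your argument.
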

\begin{proof}[Proof of \cref{z01}]
Throughout this proof let $R\in \R$, $N\in\N$ satisfy that
\begin{align}
 \frac{4L+2|f(0)|}{R^{q-1}}=\epsilon\quad\text{and}\quad
N=\inf\left\{n\in\N\cap[2,\infty)\colon 
\frac{4LR}{n}\leq \epsilon\right\},\label{x10}
\end{align}
let $\gp_0, \gp_1,\ldots, \gp_N\in \R$ be the real numbers which satisfy for all $n\in [0,N]\cap\Z$ that $\gp_n=R(-1+\frac{2n}{N})$,
and let $g\colon \R \to \R$ satisfy
for all $x\in \R$, $n\in [0,N-1]\cap\Z$ that 
\begin{equation}\label{xn03b}
g(x)=
\begin{cases}
f(\gp_0) &
\colon x\in (-\infty,\gp_0]\\[1ex]
\frac{f(\gp_n)(\gp_{n+1}-x)
+f(\gp_{n+1})(x-\gp_{n})}{\gp_{n+1}-\gp_n} & \colon x\in (\gp_n,\gp_{n+1}]\\[1ex]
f(\gp_N) & \colon  x\in (\gp_N,\infty).
\end{cases}
\end{equation}
By \cref{xn07} in \cref{xn01} the function $g$ satisfies \cref{z01a}.
 Next, it follows from \cref{xn08} in \cref{xn01}
that for all $x\in[-R,R]$ it holds
$ |g(x)-f(x)|\leq 4LR/N$. This and
the fact that 
$4LR/N\leq \eps$ prove that for all $x\in[-R,R]$ it holds  that $|g(x)-f(x)|\leq \epsilon\leq \epsilon (1+|x|^q)$.
Next, the triangle inequality,
the fact that $f(R)=g(R)$, and the Lipschitz condition of $f$ and $g$ imply for all $x\in\R$ that
\begin{align}\begin{split}
|f(x)-g(x)|&\leq 
|f(x)-f(0)|+|f(0)|+|g(x)-g(R)|+|g(R)|\\
&=
|f(x)-f(0)|+|f(0)|+|g(x)-g(R)|+|f(R)|\\
&\leq
|f(x)-f(0)|+|f(0)|+|g(x)-g(R)|+|f(R)-f(0)|+|f(0)|\\
&\leq L|x|+2|f(0)|+L|x-R|+LR\\
&\leq 2L(|x|+R)+2|f(0)|.\end{split}
\end{align}
This shows for all $x\in\R\setminus [-R,R]$ that
\begin{align}\begin{split}
\frac{|f(x)-g(x)|}{1+|x|^q}
&\leq \frac{2L(|x|+R)+2|f(0)|}{1+|x|^q}\leq \frac{4L|x|+2|f(0)|}{1+|x|^q}
\\
&\leq \frac{4L}{|x|^{q-1}}+\frac{2|f(0)|}{|x|^q}\leq 
\frac{4L}{R^{q-1}}+\frac{2|f(0)|}{R^q}\leq \frac{4L+2|f(0)|}{R^{q-1}}
= \epsilon.\end{split}
\end{align}
This and the fact that 
$\forall\,x\in[-R,R]\colon |g(x)-f(x)|\leq \epsilon (1+|x|^q)$ prove that for all 
$x\in\R$ it holds that $|g(x)-f(x)|\leq  \epsilon (1+|x|^q)$.
This shows that $g$ satisfies \cref{z01b}.
Next, \cref{xn08b} in \cref{xn01} ensures that $g$ satisfies for all $x\in (-\infty,-R]$ 
that $g(x)=g(-R)$, for all $n\in [0,\N-1]\cap \Z$, $x\in (\gp_n,\gp_{n+1}]$ that 
$g(x)=g(\gp_n)+\frac{g(\gp_{n+1})-g(\gp_n)}{\gp_{n+1}-\gp_n}(x-\gp_n)$, and
for all $x\in (R,\infty)$ that $g(x)=g(R)$.
This and \cref{xn11} (with $N=N$, 
$f=g$,
$\gp_n=\gp_n $ for $n\in[0,N]\cap\Z$, and $a_n= (g(\gp_{n+1})-g(\gp_n))/(\gp_{n+1}-\gp_n)$ for $n\in[0,N-1]\cap\Z$ in the notation of \cref{xn11}) imply that 
\begin{equation}\label{x11}
g\in \funcCalR(\{\Phi\in\setCalN\colon \funcCalL(\Phi)=(1,N+1,1)\}).
\end{equation}
Furthermore, \eqref{x10} implies that
$\frac{4LR}{N-1}>\epsilon $. This and the fact that $N\geq 2$ imply that
$\frac{N}{2}\leq N-1<\frac{4LR}{\epsilon}$. This and \cref{x10} ensure that 
\begin{align}
N \leq \frac{8LR}{\epsilon}= \frac{8L}{\epsilon} \left(\frac{4L+2|f(0)|}{\epsilon }\right)^\frac{1}{q-1}
=8L\big(4L+2|f(0)|\big)^\frac{1}{q-1}\epsilon ^{-\frac{q}{q-1}}.
\end{align}
This and \cref{x11} establish \cref{z01c}.
The proof of \cref{z01} is thus completed.
\end{proof}

\section{Deep neural network approximations for PDEs}\label{sec:main_result}
\subsection{Deep neural network approximations with specific polynomial convergence rates}
\sloppy
\begin{theorem}\label{n18}
Let $\left \|\cdot \right \|, \supnorm{\cdot} \colon (\cup_{d\in \N} \R^d) \to [0,\infty)$ and $\cardna \colon (\cup_{d\in \N}\R^d) \to \N$ satisfy for all $d\in \N$, $x=(x_1,\ldots,x_d)\in \R^d$ that $\|x\|=[\sum_{i=1}^d(x_i)^2]^{1/2}$, $\supnorm{x}=\max_{i\in [1,d]\cap \N}|x_i|$,
and $\card{x}=d$,
let
$T,\LipConstF, \boundFG,\beta \in [0,\infty)$, $p,\mathfrak{p}\in \N$, $q\in \N\cap [2,\infty)$, $\alpha \in [2,\infty)$,
  let
$f\colon \R\to\R$ satisfy for all $x,y\in\R$ that
$|f(x)-f(y)|\leq \LipConstF |x-y|$,
for every $d\in \N$ let $g_d\in C(\R^d,\R)$,
for every $d\in \N$
let $\nu_d\colon \mathcal B(\R^d)\to [0,1]$ be a probability measure on $(\R^d, \mathcal B(\R^d))$,
for every $d\in \N$ let $\funcBoldA{d}\colon \R^d\to\R^d $ satisfy for all 
 $x=(x_1,\ldots,x_d)\in \R^d$ that 
\begin{align}
\funcBoldA{d}(x)= \left(\max\{x_1,0\},\ldots,\max\{x_d,0\}\right),
\end{align}
let $\setCalD=\cup_{H\in \N} \N^{H+2}$,
let
\begin{align}\begin{split}
\setCalN= \bigcup_{H\in  \N}\bigcup_{(k_0,k_1,\ldots,k_{H+1})\in \N^{H+2}}
\left[ \prod_{n=1}^{H+1} \left(\R^{k_{n}\times k_{n-1}} \times\R^{k_{n}}\right)\right],
\end{split}
\end{align}
let
$\funcCalP\colon\setCalN\to \N$,
$\funcCalL\colon \setCalN\to\setCalD$, and
$\funcCalR\colon \setCalN\to (\cup_{k,l\in \N} C(\R^k,\R^l))$
satisfy
for all $H\in \N$, $k_0,k_1,\ldots,k_H,k_{H+1}\in \N$,
$
\Phi = ((W_1,B_1),\ldots,(W_{H+1},B_{H+1}))\in \prod_{n=1}^{H+1} \left(\R^{k_n\times k_{n-1}} \times\R^{k_n}\right), 
$
$x_0 \in \R^{k_0},\ldots,x_{H}\in \R^{k_{H}}$ with 
$\forall\, n\in \N\cap [1,H]\colon x_n = \funcBoldA{k_n}(W_n x_{n-1}+B_n )$ 
that
\begin{equation}\label{n02}
\funcCalP(\Phi )=\sum_{n=1}^{H+1}k_n(k_{n-1}+1), \qquad
\funcCalL(\Phi)= (k_0,k_1,\ldots,k_{H}, k_{H+1}),
\end{equation}
\begin{equation}
\funcCalR(\Phi )\in C(\R^{k_0},\R ^ {k_{H+1}}),\qquad\text{and}\qquad 
 (\funcCalR(\Phi)) (x_0) = W_{H+1}x_{H}+B_{H+1},
\end{equation}
for every $\varepsilon\in (0,1]$, $d\in \N$ let  $\netapproxg{d}{\varepsilon} \in \setCalN$,
assume for all $d\in \N$,
$x\in \R^d$, $\varepsilon \in (0,1]$ that
$\funcCalR(\netapproxg{d}{\varepsilon})\in C(\R^d,\R)$, 
$|(\funcCalR(\netapproxg{d}{\varepsilon}))(x)|\le \boundFG d^p (1+\normRd{x})^p$,
$\left| g_d(x)-(\funcCalR(\netapproxg{d}{\varepsilon}))(x)\right| \le \varepsilon \boundFG d^p (1+\normRd{x})^{pq}$,
$\supnorm{\funcCalL(\netapproxg{d}{\varepsilon})}\le \boundFG d^p\varepsilon^{-\alpha}$,
$\cardL{\netapproxg{d}{\varepsilon}}\le \boundFG d^p\varepsilon^{-\beta}$, and
$\left(\int_{\R^d}\normRd{y}^{2pq} \nu_d(dy)\right)^{1/(2pq)}\le \boundFG d^{\mathfrak p}$.
Then
\begin{enumerate}[(i)]
\item \label{n07} there exist unique $u_d\in C([0,T]\times \R^d,\R)$, $d\in \N$, such that 
for every $d\in \N$, $x\in \R^d$, $s\in [0,T]$,
every probability space
$(\Omega, \mathcal{F}, \P)$, and every standard 
Brownian motion $\fwpr \colon [0,T]\times \Omega\to \R^d$ with continuous sample paths it holds that
$\sup_{t\in [0,T]}\sup_{y\in \R^d}\big(\frac{|u_d(t,y)|}{1+\normRd{y}^p} \big)<\infty$ and
\begin{align}\label{n03}
\smallU_d(s,x)=\E\!\left[\funcG_d\left(x+\fwpr_{T-s}\right)+\int_s^T f\left(\smallU_d\left(t,x+\fwpr_{t-s}\right)\right)\,dt\right]
\end{align}
and
\item \label{n08} there exist $(\Psi_{d,\varepsilon})_{d\in \N,\varepsilon \in (0,1]}\subseteq \setCalN$, $\eta \in (0,\infty)$, $C=(C_\gamma)_{\gamma\in (0,1]}\colon (0,1]\to (0,\infty)$ such that for all
$d\in \N$, $\varepsilon, \gamma \in (0,1]$ it holds that
$\funcCalR(\Psi_{d,\varepsilon})\in C(\R^d,\R)$, 
$\funcCalP(\Psi_{d,\varepsilon})\le C_\gamma d^\eta \varepsilon^{-(4+2\alpha+\beta+\gamma)}$,
and 
\begin{equation}\label{n15}
\left[\int_{\R^d}\left|\smallU_d(0,x)-(\funcCalR(\Psi_{d,\varepsilon}))(x)\right|^2\nu_d(dx)\right]^{\!\!\nicefrac{1}{2}}\le \varepsilon.
\end{equation}
\end{enumerate}
\end{theorem}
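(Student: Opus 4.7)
My plan is to construct $\Psi_{d,\varepsilon}$ as the DNN representation, via \cref{b26}, of a deterministically chosen realization of an MLP approximation $U^0_{N,M}$ built from DNN-approximations $\tilde f$ of $f$ and $\tilde g_d$ of $g_d$, and to bound the $L^2(\nu_d)$ error via \cref{m06}.

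For part (i), note that the hypothesis $|\funcCalR(\netapproxg{d}{\varepsilon})(x)| \le Bd^p(1+\|x\|)^p$ (uniform in $\varepsilon\in(0,1]$) together with the approximation property (letting $\varepsilon\to 0$ pointwise) implies $|g_d(x)| \le Bd^p(1+\|x\|)^p$. Combined with the Lipschitz hypothesis on $f$, this places us in the scope of the Feynman--Kac/contraction theory for semilinear heat equations with polynomially-growing terminal data (compare with \cite[Corollary 3.11]{HJK+18} and its proof); a standard contraction on the Banach space of continuous functions of polynomial growth rate $p$ then delivers a unique $u_d$ satisfying \eqref{n03} with the stated growth bound.

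For part (ii), fix $d$ and $\varepsilon$, and let $\delta_f, \varepsilon' > 0$ be scaling parameters (to be pinned down). Invoke \cref{z01} to obtain an $L$-Lipschitz $\tilde f\in C(\R,\R)$ represented by a one-hidden-layer network $\Phi_f$ of width $O(\delta_f^{-q/(q-1)})$ with $|\tilde f(v) - f(v)| \le \delta_f(1+|v|^q)$, and set $\tilde g_d := \funcCalR(\netapproxg{d}{\varepsilon'})$. Apply \cref{m06} to $U^0_{N,M}$ (built from $\tilde f$ and $\tilde g_d$) with $f_1 = f$, $g_1 = g_d$, $u_1 = u_d$, $f_2 = \tilde f$, $g_2 = \tilde g_d$, the constants $p, q$ of \cref{t00} matching those of the theorem, $B_{\mathrm{stab}} = O(d^p)$, and perturbation parameter $\delta_{\mathrm{stab}} = \max\{\delta_f, \varepsilon' Bd^p\}$. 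Squaring the pointwise bound of \cref{m06}, integrating against $\nu_d$, and combining Fubini with the $2pq$-moment hypothesis yields
\begin{equation*}
\E\!\left[\int_{\R^d} |U^0_{N,M}(0,x) - u_d(0,x)|^2\, \nu_d(dx)\right] \le C_0\, d^{\eta_0} \left(\delta_{\mathrm{stab}} + \frac{e^{M/2}(1+2LT)^N}{M^{N/2}}\right)^{\!2}.
\end{equation*}
Choose $\delta_f, \varepsilon'$ proportional to $\varepsilon/d^{\mathrm{const}}$ so that $C_0^{1/2} d^{\eta_0/2}\delta_{\mathrm{stab}} \le \varepsilon/2$, take $M$ a large constant (depending only on $\gamma, L, T$), and set $N = \lceil K_\gamma \log(1/\varepsilon) \rceil$ so that the Monte-Carlo term lies below $\varepsilon/(2C_0^{1/2} d^{\eta_0/2})$ as well. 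A mean-value argument then extracts $\omega \in \Omega$ with $\int |U^0_{N,M}(0,\cdot,\omega) - u_d(0,\cdot)|^2\, d\nu_d \le \varepsilon^2$, and we define $\Psi_{d,\varepsilon}$ as the DNN representation of $U^0_{N,M}(0,\cdot,\omega)$ furnished by \cref{b26}.

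For the parameter count, \cref{b26} yields depth $\le 2N + Bd^p(\varepsilon')^{-\beta}$ and layer-sup-norm $\le c(3M)^N$ with $c \le Cd^\eta\varepsilon^{-\alpha}$ (using $\alpha \ge 2 \ge q/(q-1)$ to absorb the $\Phi_f$ width). Bounding $\funcCalP(\Psi_{d,\varepsilon}) \le 2(\mathrm{depth})(\mathrm{layer\text{-}sup\text{-}norm})^2$ gives $\funcCalP(\Psi_{d,\varepsilon}) \le C_\gamma d^\eta \varepsilon^{-(2\alpha+\beta)}(3M)^{2N}$. The crucial step for the sharp exponent is that taking $M$ large (so that $M \gg (1+2LT)^2$) forces $N = O(\log(1/\varepsilon)/\log(M/(1+2LT)^2))$ to satisfy the MC constraint, whence $(3M)^{2N} \le C_\gamma \varepsilon^{-4\log(3M)/\log(M/(1+2LT)^2)}$; as $M\to\infty$ this exponent tends to $4$, so for every $\gamma \in (0,1]$ we can achieve $(3M)^{2N} \le C_\gamma \varepsilon^{-(4+\gamma)}$. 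Combining yields $\funcCalP(\Psi_{d,\varepsilon}) \le C_\gamma d^\eta \varepsilon^{-(4+2\alpha+\beta+\gamma)}$. The principal obstacle is precisely this four-parameter balance $(\delta_f, \varepsilon', N, M)$: scaling $M$ polynomially in $\varepsilon^{-1}$ would reduce the exponent inside $(3M)^{2N}$ but would leak that saving back into the $c$-factor of the layer-sup-norm, so $M$ must be kept a large $\varepsilon$-independent constant and the entire $\varepsilon^{-1}$-scaling absorbed into $N$.
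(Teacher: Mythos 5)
Your overall architecture matches the paper's: approximate $f$ via \cref{z01} and $g_d$ via $\netapproxg{d}{\varepsilon'}$, feed these into the MLP scheme, control the error by \cref{m06} plus Fubini and a mean-value selection of $\omega$, and realize the resulting random function as a DNN via \cref{b26} with the cost bound $\funcCalP\le 2\,(\text{depth})\,(\text{width})^2$. Parts (i), the error analysis, and the width/depth bookkeeping are all sound (modulo one imprecision: since the prefactor in \cref{m06} grows polynomially in $d$ through $c_d$ and the Brownian moment $(\E[\normRd{\fwpr^d_T}^{pq}])^{1/(pq)}\sim\sqrt{d}$, your $N$ must contain a $\log d$ term, which then feeds an extra $d$-power into $(3M)^{2N}$; this is absorbable into $d^\eta$ uniformly in $\gamma$, but it is not visible in your formula $N=\lceil K_\gamma\log(1/\varepsilon)\rceil$).

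The genuine gap is the choice of $M$ as a large constant \emph{depending on $\gamma$}. Item \eqref{n08} asserts the existence of a \emph{single} family $(\Psi_{d,\varepsilon})_{d\in\N,\varepsilon\in(0,1]}$ and a single $\eta$ such that the bound $\funcCalP(\Psi_{d,\varepsilon})\le C_\gamma d^\eta\varepsilon^{-(4+2\alpha+\beta+\gamma)}$ holds \emph{for every} $\gamma\in(0,1]$ simultaneously. In your construction the networks are built from the MLP approximation with parameter $M=M_\gamma$, so $\Psi_{d,\varepsilon}$ depends on $\gamma$; and for any \emph{fixed} $M$ your own computation gives the exponent $4\log(3M)/\log(M/(1+2LT)^2)$, a fixed number strictly larger than $4$, so no single constant-$M$ family can achieve $4+\gamma$ for all $\gamma$. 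Your closing remark rules out polynomial scaling of $M$ in $\varepsilon^{-1}$, but the missing idea is the intermediate regime: the paper couples the two parameters by taking $M=N=N_{d,\varepsilon}$, where $N_{d,\varepsilon}$ is the smallest $n\ge 2$ with $c_d(\sqrt{e}(1+2LT)/\sqrt{n})^{n}\le\varepsilon/2$. The Monte-Carlo error then decays superexponentially in $n$, so $N_{d,\varepsilon}$ grows only like $\log(1/\varepsilon)/\log\log(1/\varepsilon)$, and the minimality of $N_{d,\varepsilon}$ yields $\varepsilon\le 2c_d(\sqrt{e}(1+2LT)/\sqrt{N_{d,\varepsilon}-1})^{N_{d,\varepsilon}-1}$, whence
\begin{equation*}
N_{d,\varepsilon}\,(3N_{d,\varepsilon})^{2N_{d,\varepsilon}}
\le
\left[\sup_{n\ge 2} n(3n)^{2n}\Big(\tfrac{\sqrt{e}(1+2LT)}{\sqrt{n-1}}\Big)^{(n-1)(4+\gamma)}\right](2c_d)^{4+\gamma}\varepsilon^{-(4+\gamma)}
\end{equation*}
with the supremum finite because $(\sqrt{n-1})^{(n-1)(4+\gamma)}$ eventually dominates $(3n)^{2n}$. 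This gives the $\varepsilon^{-(4+\gamma)}$ factor for all $\gamma$ at once from one $\gamma$-independent family. (For the application in \cref{cor:main_thm} only a single fixed $\gamma$ is used, so your weaker ``for each $\gamma$ there exists a family'' version would suffice there, but it does not prove \cref{n18} as stated.)
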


\begin{proof}[Proof of \cref{n18}]
Throughout the proof assume without loss of generality that
\begin{equation}\label{n21e}
\boundFG \ge \max\!\left\{16,|f(0)|+1,16\left|L\big(4L+2|f(0)|\big)\right|^\frac{1}{(q-1)}\right\}.
\end{equation}
Note that
the triangle inequality, the fact that $\forall \, d\in \N, x \in \R^d, \varepsilon \in (0,1]\colon 
|(\funcCalR(\netapproxg{d}{\varepsilon}))(x)|\le \boundFG d^p(1+\normRd{x})^p
$, and the
fact that $\forall \, d\in \N, x \in \R^d, \varepsilon \in (0,1]\colon 
\left| g_d(x)-(\funcCalR(\netapproxg{d}{\varepsilon}))(x)\right| \le \varepsilon \boundFG d^p (1+\normRd{x})^{pq}$
imply for all $d\in \N$, $x \in \R^d$, $\varepsilon \in (0,1]$ that
\begin{equation}
\begin{split}
\left|g_d(x)\right|&\leq \left|g_d(x)-(\funcCalR(\netapproxg{d}{\varepsilon}))(x)\right|+\left|(\funcCalR(\netapproxg{d}{\varepsilon}))(x)\right|
\le \varepsilon \boundFG d^p (1+\normRd{x})^{pq}+ \boundFG d^p (1+\normRd{x})^p.
\end{split}
\end{equation}
This proves for all $d\in \N$, $x \in \R^d$ that 
\begin{equation}\label{n21}
\left|g_d(x)\right|\leq \boundFG d^p (1+\normRd{x})^p.
\end{equation}
Corollary 3.11 in \cite{HJK+18}, the fact that $f$ is globally Lipschitz continuous, and \cref{n21}
hence establish \cref{n07}.
It thus remains to prove \cref{n08}.
To this end note that \cref{z01} ensures that 
there exist
$\netapproxf{\varepsilon}\in \setCalN$, $\varepsilon\in (0,1]$, which satisfy
for all
$v,w\in \R$, $\varepsilon \in (0,1]$ that
$\funcCalR(\netapproxf{\varepsilon})\in C(\R,\R)$,
$
\left|(\funcCalR(\netapproxf{\varepsilon}))(w)-(\funcCalR(\netapproxf{\varepsilon}))(v)\right|\leq \LipConstF\left|w-v\right|
$, 
$\left |f(v)-(\funcCalR(\netapproxf{\varepsilon}))(v)\right|\le \varepsilon (1+|v|^q)$,
$\cardL{\netapproxf{\varepsilon}}=3$, and
\begin{equation}\label{n21c}
 \supnorm{\funcCalL(\netapproxf{\varepsilon})}\leq 16\left[\max\!\left\{1,\left|L\big(4L+2|f(0)|\big)\right|^\frac{1}{(q-1)}\right\}\right]\varepsilon ^{-\frac{q}{(q-1)}}.
\end{equation}
Note that the fact that $\boundFG\ge 1+|f(0)|$ implies for all $\varepsilon\in (0,1]$ that
\begin{equation}\label{n21b}
\left|(\funcCalR(\netapproxf{\varepsilon}))(0)\right|\le \left|(\funcCalR(\netapproxf{\varepsilon}))(0)-f(0)\right|+|f(0)|\le \varepsilon+|f(0)|\le \boundFG.
\end{equation}
Next let
$(\Omega, \mathcal{F}, \P)$
be a probability space, 
for every $d\in \N$
let $\fwpr^d \colon [0,T]\times \Omega\to \R^d$ be a standard
Brownian motion with continuous sample paths,
let
$  \Theta = \bigcup_{ n \in \N } \Z^n$,
let $\unif^\theta\colon \Omega\to[0,1]$, $\theta\in \Theta$, be independent random variables which are uniformly distributed on $[0,1]$, 
let $\uniform^\theta\colon [0,T]\times \Omega\to [0, T]$, $\theta\in\Theta$, satisfy 
for all $t\in [0,T]$, $\theta\in \Theta$ that 
$\uniform^\theta _t = t+ (T-t)\unif^\theta$,
for every $d\in \N$
let $\sppr^{\theta,d}\colon[0,T]\times \Omega \to\R^d $, $\theta\in \Theta$, be independent 
standard Brownian motions with continuous sample paths,
assume for every $d\in \N$ that $(\unif^\theta)_{\theta\in \Theta}$ and $(\sppr^{\theta,d})_{\theta\in \Theta}$ are independent,
and
let
$ 
  {\bigU}_{ n,M,d,\delta}^{\theta} \colon [0, T] \times \R^d \times \Omega \to \R
$, $n,M\in\Z$, $d\in \N$, $\delta \in (0,1]$, $\theta\in\Theta$, satisfy
for all $d,n,M \in \N$, $\delta \in (0,1]$, $\theta\in\Theta $, 
$t \in [0,T]$, $x\in\R^d $
that ${\bigU}_{-1,M,d,\delta}^{\theta}(t,x)={\bigU}_{0,M,d,\delta}^{\theta}(t,x)=0$ and 
\begin{equation}
\begin{split}
  {\bigU}_{n,M,d,\delta}^{\theta}(t,x)
=&
  \frac{1}{M^n}
 \sum_{i=1}^{M^n} 
      (\funcCalR(\netapproxg{d}{\delta}))\big(x+\sppr^{(\theta,0,-i),d}_{T}-\sppr^{(\theta,0,-i),d}_{t}\big)
 \\
 &+
  \sum_{l=0}^{n-1} \frac{(T-t)}{M^{n-l}}
    \Bigg[\sum_{i=1}^{M^{n-l}}
      \big((\funcCalR(\netapproxf{\delta}))\circ{\bigU}_{l,M,d,\delta}^{(\theta,l,i)}-\1_{\N}(l)(\funcCalR(\netapproxf{\delta}))\circ {\bigU}_{l-1,M,d,\delta}^{(\theta,-l,i)}\big)\\
&\qquad\qquad\qquad\qquad\qquad\qquad\qquad\qquad
      \left(\uniform_t^{(\theta,l,i)},x+\sppr_{\uniform_t^{(\theta,l,i)}}^{(\theta,l,i),d}-\sppr_{t}^{(\theta,l,i),d}\right)
    \Bigg],
\end{split}\label{n04}
\end{equation}
let $c_{d}\in [1,\infty)$, $d\in \N$, satisfy for all $d\in \N$ that
\begin{equation}
c_{d}= \left(e^{LT} (T+1)\right)^{q+1}\left((\boundFG d^p)^q+1\right)
\left[
1+\left(\int_{\R^d}\normRd{x}^{2pq} \nu_d(dx)\right)^{\nicefrac{1}{(2pq)}}
+\left(\Exp{           \normRd{\fwpr^d_T}^{pq}\Big.\! }\bigg.\!\right)^{\nicefrac{1}{(pq)}}
\right]^{pq},
\end{equation}
let $k_{d,\varepsilon}\in \N$, $d\in \N$, $\varepsilon \in (0,1]$, satisfy for all $d\in \N$, $\varepsilon \in (0,1]$ that
\begin{align}\label{n10}
k_{d,\varepsilon}=\max\left\{ \supnorm{\funcCalL(\netapproxf{\varepsilon})},\supnorm{\funcCalL(\netapproxg{d}{\varepsilon})},2\right\},
\end{align}
let $\tilde C=(\tilde C_\gamma)_{\gamma\in (0,1]}\colon (0,\infty)\to (0,\infty]$
satisfy for all $\gamma\in (0,\infty)$ that 
\begin{equation}
\tilde C_\gamma=
\sup_{n\in \N \cap [2,\infty)}
\left[
n (3 n)^{2n}
\left(\frac{\sqrt{e}(1+2LT)}{\sqrt{n-1}} \right)^{(n-1)(4+\gamma)}
\right],
\end{equation}
let $N_{d,\varepsilon}\in \N$, $d\in \N$, $\varepsilon \in (0,1]$, satisfy for all $d\in \N$, $\varepsilon \in (0,1]$ that
\begin{equation}\label{n12}
N_{d,\varepsilon}=\min\!\left\{n \in \N \cap [2,\infty) \colon \left[ c_d
\left(\frac{\sqrt{e}(1+2LT)}{\sqrt{n}} \right)^n\right] \le \frac \varepsilon 2\right\},
\end{equation}
and let $\delta_{d,\varepsilon}\in (0,1]$, $d\in \N$, $\varepsilon \in (0,1]$, satisfy for all $d\in \N$, $\varepsilon \in (0,1]$ that $\delta_{d,\varepsilon}=\frac{\epsilon}{4\boundFG d ^p c_d}$.
Note that the fact that
for all $d\in \N$ the random variable $\normRd{\nicefrac{\fwpr^d_T}{\sqrt{T}}}^{2}$ is chi-squared distributed with $d$ degrees of freedom and Jensen's inequality imply that for all $d\in \N$ it holds that
\begin{equation}
\left(\E\!\left[\normRd{\fwpr^d_T}^{pq}\right]\right)^2\le \E\!\left[\normRd{\fwpr^d_T}^{2pq}\right]=(2T)^{pq} \left[\frac{\Gamma\!\left(\frac{d}{2}+pq\right)}{\Gamma\!\left(\frac{d}{2}\right)}\right]
=(2T)^{pq} \left[\prod_{k=0}^{pq-1}\left(\frac{d}{2}+k \right)\right].
\end{equation}
This implies for all $d\in \N$ that
\begin{equation}
\left(\Exp{\normRd{\fwpr^d_T}^{pq}\Big.\! }\bigg.\!\right)^{\nicefrac{1}{(pq)}}
=
\left(\Exp{ \normRd{\fwpr^d_T}^{pq}\Big.\! }\bigg.\!\right)^{\nicefrac{2}{(2pq)}}
\le 
\sqrt{2T}\left( \prod_{k=0}^{pq-1}\left(\frac{d}{2}+k \right)\right)^{\nicefrac{1}{(2pq)}}
\le 
\sqrt{2T\left(\frac{d}{2}+pq-1\right)}.
\end{equation}
This together with the fact that $\forall \, d\in \N\colon \left(\int_{\R^d}\normRd{x}^{2pq} \nu_d(dx)\right)^{\nicefrac{1}{(2pq)}}\le \boundFG d^{\mathfrak p}$ implies that
there exist $\bar C \in (0,\infty)$ such that for all $d\in \N$ it holds that
\begin{equation}\label{n17}
c_d\le \bar C d^{pq}\left(\frac{1+d^{\mathfrak p}+\sqrt{d}}{3} \right)^{pq}\le \bar C d^{(\mathfrak p+1) pq}.
\end{equation}
Next note that for all $\gamma\in (0,\infty)$ it holds that
\begin{equation}\label{n16}
\begin{split}
\tilde C_\gamma&=
\sup_{n\in \N \cap [2,\infty)}
\left[
n (3 n)^{2n}
\left(\frac{\sqrt{e}(1+2LT)}{\sqrt{n-1}} \right)^{(n-1)(4+\gamma)}
\right]\\
&=
\sup_{n\in \N \cap [2,\infty)}
\left[
(\sqrt{e}(1+2LT))^{(n-1)(4+\gamma)}n^3 3^{2n} (n-1)^{-(n-1)\frac{\gamma}{2}}
\left(
\frac{n}{n-1}
\right)^{2(n-1)}
\right]\\
&\leq 
\left[
\sup_{n\in \N \cap [2,\infty)}
\left[
(\sqrt{e}(1+2LT))^{(n-1)(4+\gamma)}n^3 3^{2n} (n-1)^{-(n-1)\frac{\gamma}{2}}
\right]
\right]
\left[
\sup_{n\in \N \cap [2,\infty)}
\left(
\frac{n}{n-1}
\right)^{2(n-1)}
\right]\\
&
<\infty.
\end{split}
\end{equation}
The fact that 
for all $d\in \N$,
$v\in \R$, $x\in \R^d$, $\varepsilon \in (0,1]$ it holds that
$\left |f(v)-(\funcCalR(\netapproxf{\varepsilon}))(v)\right|\le \varepsilon (1+|v|^q)$
and $\left| g_d(x)-(\funcCalR(\netapproxg{d}{\varepsilon}))(x)\right| \le \varepsilon \boundFG d^p (1+\normRd{x})^{pq}$ implies that for all $d\in \N$,
$v\in \R$, $x\in \R^d$, $\varepsilon \in (0,1]$ it holds that
\begin{equation}
\begin{split}
\max\left\{\left |f(v)-(\funcCalR(\netapproxf{\varepsilon}))(v)\right|, \left| g_d(x)-(\funcCalR(\netapproxg{d}{\varepsilon}))(x)\right|\right\} 
&\le \max \left\{ \varepsilon (1+|v|^q),  \varepsilon \boundFG d^p (1+\normRd{x})^{pq} \right\}\\
&\le  \varepsilon \boundFG d^p ((1+ \normRd{x})^{pq}+|v|^q).
\end{split}
\end{equation}
This, \cref{n21}, \cref{n21b}, the fact that for all $d\in \N$,
$v,w\in \R$, $x\in \R^d$, $\varepsilon \in (0,1]$ it holds that
$|f(v)-f(w)|\le \LipConstF$,
$
\left|(\funcCalR(\netapproxf{\varepsilon}))(v)-(\funcCalR(\netapproxf{\varepsilon}))(w)\right|\leq \LipConstF\left|v-w\right|
$, 
$
|f(0)|\le \boundFG
$,
$|(\funcCalR(\netapproxg{d}{\varepsilon}))(x)|\le \boundFG d^p (1+\normRd{x})^p$,
and \cref{m06} (with $f_1=f$, $f_2=\funcCalR(\netapproxf{\delta})$, $g_1=g_d$, $g_2=\funcCalR(\netapproxg{d}{\delta})$, $L=L$, $\delta=\delta \boundFG d^p$, $\boundFG=\boundFG d^p$, $\fwpr=\fwpr^d$ in the notation of \cref{m06})
imply that for all $d,N,M\in \N$, $\delta \in (0,1]$ it holds that
\begin{multline}
\left(\int_{\R^d}\E\!\left[\left|U^0_{N,M,d,\delta}(0,x)-\smallU_d(0,x)\right|^2\right]\nu_d(dx)\right)^{\!\!\nicefrac{1}{2}}
\\\leq 
\left(e^{LT} (T+1)\right)^{q+1}\left((\boundFG d^p)^q+1\right)\left(\delta \boundFG d^p+\frac{e^{M/2}(1+2LT)^{N}}{M^{N/2}}\right)\\
\cdot 
\left[\int_{\R^d}
  \left(1+\normRd{x}+
\left(\Exp{           \normRd{\fwpr^d_T}^{pq}\Big.\! }\bigg.\!\right)^{\!\!\frac{1}{pq}}\right)^{2pq}
\nu_d(dx)\right]^{\!\!\nicefrac{1}{2}}.
\end{multline}
The triangle inequality hence proves for all $d,N,M\in \N$, $\delta \in (0,1]$ that
\begin{align}\begin{split}
&\left(\int_{\R^d}\E\!\left[\left|U^0_{N,M,d,\delta}(0,x)-\smallU_d(0,x)\right|^2\right]\nu_d(dx)\right)^{\!\!\nicefrac{1}{2}}
\\&\leq 
\left(e^{LT} (T+1)\right)^{q+1}\left((\boundFG d^p)^q+1\right)\left(\delta \boundFG d^p+\frac{e^{M/2}(1+2LT)^{N}}{M^{N/2}}\right)\\
&\qquad \cdot 
\left[
1+\left(\int_{\R^d}\normRd{x}^{2pq} \nu_d(dx)\right)^{\nicefrac{1}{(2pq)}}
+\left(\Exp{           \normRd{\fwpr^d_T}^{pq}\Big.\! }\bigg.\!\right)^{\nicefrac{1}{(pq)}}
\right]^{pq}
\\
&=
c_d\left(\delta \boundFG d^p +  \frac{e^{M/2}(1+2LT)^{N}}{M^{N/2}}\right)
.
\end{split}\label{n05}
\end{align}
This and Fubini's theorem imply that for all $d\in \N$, $\varepsilon \in (0,1]$ it holds that
\begin{align}\begin{split}
&\E\!\left[\int_{\R^d}\left|U^0_{N_{d,\varepsilon},N_{d,\varepsilon},d,\delta_{d,\varepsilon}}(0,x)-\smallU_d(0,x)\right|^2\nu_d(dx)\right]\\
&=
\int_{\R^d}\E\!\left[\left|U^0_{N_{d,\varepsilon},N_{d,\varepsilon},d,\delta_{d,\varepsilon}}(0,x)-\smallU_d(0,x)\right|^2\right]\nu_d(dx)\\
&\leq 
\left(c_d\delta_{d,\varepsilon}\boundFG d^p +  c_d\left(\frac{\sqrt{e}(1+2LT)}{\sqrt{N_{d,\varepsilon}}}\right)^{N_{d,\varepsilon}} \right)^2
\leq
\left(\frac{\varepsilon}{4}+\frac{\varepsilon}{2}\right)^2< \varepsilon^2.
\end{split}
\end{align}
This implies that for all $d\in \N$, $\varepsilon \in (0,1]$ there exists $\omega_{d,\varepsilon}\in \Omega$ such that
\begin{align}\begin{split}
\int_{\R^d}\left|U^0_{N_{d,\varepsilon},N_{d,\varepsilon},d,\delta_{d,\varepsilon}}(0,x,\omega_{d,\varepsilon})-\smallU_d(0,x)\right|^2\nu_d(dx)
&<\varepsilon^2.
\end{split}\label{n06}
\end{align}
Next, observe that \cref{b26} shows that for all $d\in \N$, $\varepsilon \in (0,1]$ 
there exists $\Psi_{d,\varepsilon} \in \setCalN$ such that for all $x\in \R^d$
it holds that
$\funcCalR(\Psi_{d,\varepsilon})\in C(\R^d,\R)$,
$(\funcCalR(\Psi_{d,\varepsilon}))(x)=U^0_{N_{d,\varepsilon},N_{d,\varepsilon},d,\delta_{d,\varepsilon}}(0,x,\omega_{d,\varepsilon})$, $\supnorm{\funcCalL(\Psi_{d,\varepsilon} )}\leq k_{d,\delta_{d,\varepsilon}}(3 N_{d,\varepsilon})^{N_{d,\varepsilon}}$, and
\begin{align}\label{n09}
\cardL{\Psi_{d,\varepsilon}}= N_{d,\varepsilon}\left(\cardL{\netapproxf{\delta_{d,\varepsilon}}}-1\right)+\cardL{\netapproxg{d}{\delta_{d,\varepsilon}}}.
\end{align}
This and \cref{n06} prove \cref{n15}. Moreover,
\cref{n09} and \eqref{n02} imply that for all $d\in \N$, $\varepsilon \in (0,1]$ it holds that
\begin{align}\begin{split}\label{n21d}
\funcCalP(\Psi_{d,\varepsilon})&\leq \sum_{j=1}^{\cardL{\Psi_{d,\varepsilon}}}k_{d,\delta_{d,\varepsilon}}(3 N_{d,\varepsilon})^{N_{d,\varepsilon}} \left(k_{d,\delta_{d,\varepsilon}}(3 N_{d,\varepsilon})^{N_{d,\varepsilon}}+1\right)\\
&\leq 2\cardL{\Psi_{d,\varepsilon}}k_{d,\delta_{d,\varepsilon}}^2(3 N_{d,\varepsilon})^{2N_{d,\varepsilon}}\\
&=2
\left(  N_{d,\varepsilon}\left(\big.\!\cardL{\netapproxf{\delta_{d,\varepsilon}}}-1\right)+\cardL{\netapproxg{d}{\delta_{d,\varepsilon}}}\right)k_{d,\delta_{d,\varepsilon}}^2(3 N_{d,\varepsilon})^{2N_{d,\varepsilon}}.\end{split}
\end{align}
In addition, it follows from \cref{n21c} and \cref{n21e} that for all $\varepsilon \in (0,1]$ it holds that
\begin{equation}
\supnorm{\funcCalL(\netapproxf{\varepsilon})}\leq16\left[\max\!\left\{1,\left|L\big(4L+2|f(0)|\big)\right|^\frac{1}{(q-1)}\right\}\right]\varepsilon ^{-\frac{q}{(q-1)}}\le B \varepsilon^{-2}\le 
B \varepsilon^{-\alpha}.
\end{equation}
Combining this with \cref{n21d}, the fact that $\forall \, \varepsilon \in (0,1] \colon\cardL{\netapproxf{\varepsilon}}=3$, the fact that $\forall \, d\in \N, \varepsilon \in (0,1] \colon \supnorm{\funcCalL(\netapproxg{d}{\varepsilon})}\le d^p\varepsilon^{-\alpha}\boundFG$, and the fact that
$\forall \, d\in \N, \varepsilon \in (0,1] \colon \cardL{\netapproxg{d}{\varepsilon}}\le d^p\varepsilon^{-\beta}\boundFG$
implies that for all $d\in \N$, $\varepsilon \in (0,1]$ it holds that $k_{d,\delta_{d,\varepsilon}}\le d^p\delta_{d,\varepsilon}^{-\alpha}\boundFG$ and that
\begin{align}\begin{split}\label{n11}
\funcCalP(\Psi_{d,\varepsilon})
&\leq 2
\left(  N_{d,\varepsilon}\left(\big.\!\cardL{\netapproxf{\delta_{d,\varepsilon}}}-1\right)+\cardL{\netapproxg{d}{\delta_{d,\varepsilon}}}\right)
\left(d^{p}\delta_{d,\varepsilon}^{-\alpha}\boundFG\right)^2 (3 N_{d,\varepsilon})^{2N_{d,\varepsilon}}\\
&\leq 2
\left(  2N_{d,\varepsilon}+d^p(\delta_{d,\varepsilon})^{-\beta}\boundFG\right)
\left(d^{p}\delta_{d,\varepsilon}^{-\alpha}\boundFG\right)^2 (3 N_{d,\varepsilon})^{2N_{d,\varepsilon}}\\
&\leq 4d^p\delta_{d,\varepsilon}^{-\beta}\boundFG
d^{2p}\delta_{d,\varepsilon}^{-2\alpha}\boundFG^2 N_{d,\varepsilon} (3 N_{d,\varepsilon})^{2N_{d,\varepsilon}}\\
&= 4\boundFG^3 
(4c_d \boundFG d^p)^{2\alpha+\beta}
d^{3p} 
\varepsilon^{-(2\alpha+\beta)}
N_{d,\varepsilon} (3 N_{d,\varepsilon})^{2N_{d,\varepsilon}}.
\end{split}
\end{align}
Furthermore, \cref{n12} ensures that for all $d\in \N$, $\varepsilon \in (0,1]$ it holds that
\begin{equation}
\varepsilon \le 2c_d
\left(\frac{\sqrt{e}(1+2LT)}{\sqrt{N_{d,\varepsilon}-1}} \right)^{N_{d,\varepsilon}-1}.
\end{equation}
This together with \cref{n11} implies that for all $d\in \N$, $\varepsilon \in (0,1]$, $\gamma\in (0,1]$ it holds that
\begin{align}\begin{split}\label{n13}
&\funcCalP(\Psi_{d,\varepsilon})\leq
4\boundFG^{2\alpha+\beta+3} 
(4c_d)^{2\alpha+\beta}
d^{(2\alpha+\beta+3)p}
\varepsilon^{-(2\alpha+\beta)}
N_{d,\varepsilon} (3 N_{d,\varepsilon})^{2N_{d,\varepsilon}}\varepsilon^{4+\gamma}\varepsilon^{-(4+\gamma)}\\
&\leq
4\boundFG^{2\alpha+\beta+3}
(4c_d)^{4+2\alpha+\beta+\gamma}
d^{(2\alpha+\beta+3)p} 
N_{d,\varepsilon} (3 N_{d,\varepsilon})^{2N_{d,\varepsilon}}
\left(\frac{\sqrt{e}(1+2LT)}{\sqrt{N_{d,\varepsilon}-1}} \right)^{(N_{d,\varepsilon}-1)(4+\gamma)}\varepsilon^{-(4+2\alpha+\beta+\gamma)}\\
&\leq
4\boundFG^{2\alpha+\beta+3}
(4c_d)^{5+2\alpha+\beta}
d^{(2\alpha+\beta+3)p}
\sup_{n\in \N \cap [2,\infty)}\left[
n (3 n)^{2n}
\left(\frac{\sqrt{e}(1+2LT)}{\sqrt{n-1}} \right)^{(n-1)(4+\gamma)}
\right]
\varepsilon^{-(4+2\alpha+\beta+\gamma)}
\\
&=
4\boundFG^{2\alpha+\beta+3}
(4c_d)^{5+2\alpha+\beta}
d^{(2\alpha+\beta+3)p} 
\tilde C_\gamma
\varepsilon^{-(4+2\alpha+\beta+\gamma)}
.\end{split}
\end{align}
Combining this with \cref{n17} and \cref{n16} establishes that
there exist $\eta \in (0,\infty)$, $C=(C_\gamma)_{\gamma\in (0,1]}\colon (0,1]\to (0,\infty)$ such that for all
$d\in \N$, $\varepsilon, \gamma \in (0,1]$ it holds that
$\funcCalP(\Psi_{d,\varepsilon})\le C_\gamma d^\eta \varepsilon^{-(4+2\alpha+\beta+\gamma)}$. The proof of \cref{n18} is thus completed.
\end{proof}
\fussy

\subsection{Deep neural network approximations with general polynomial convergence rates}\label{subsec:dnn_approx_gen_pol}
\begin{corollary}\label{cor:main_thm}
Let $\left\|\cdot \right\|\colon (\cup_{d\in \N} \R^d) \to [0,\infty)$
and $\funcBoldA{d}\colon \R^d\to\R^d$, $d\in \N$, satisfy for all $d\in \N$, $x=(x_1,\ldots,x_d)\in \R^d$ that $\|x\|=[\sum_{i=1}^d(x_i)^2]^{1/2}$
and
$
\funcBoldA{d}(x)= \left(\max\{x_1,0\},\ldots,\max\{x_d,0\}\right),
$
let
$
\setCalN= \cup_{H\in  \N}\cup_{(k_0,k_1,\ldots,k_{H+1})\in \N^{H+2}}
[ \prod_{n=1}^{H+1} \left(\R^{k_{n}\times k_{n-1}} \times\R^{k_{n}}\right)],
$
let
$\funcCalR\colon \setCalN\to (\cup_{k,l\in \N} C(\R^k,\R^l))$ and
$\funcCalP\colon\setCalN\to \N$
 satisfy
for all $H\in \N$, $k_0,k_1,\ldots,k_H,k_{H+1}\in \N$,
$
\Phi = ((W_1,B_1),\ldots,(W_{H+1},B_{H+1}))\in \prod_{n=1}^{H+1} \left(\R^{k_n\times k_{n-1}} \times\R^{k_n}\right), 
$
$x_0 \in \R^{k_0},\ldots,x_{H}\in \R^{k_{H}}$ with 
$\forall\, n\in \N\cap [1,H]\colon x_n = \funcBoldA{k_n}(W_n x_{n-1}+B_n )$ 
that
\vspace{-5mm}
\begin{equation*}
\funcCalR(\Phi )\in C(\R^{k_0},\R ^ {k_{H+1}}),\quad
 (\funcCalR(\Phi)) (x_0) = W_{H+1}x_{H}+B_{H+1}, \quad\text{and}\quad \funcCalP(\Phi )=\textstyle{\sum\limits_{n=1}^{H+1}}k_n(k_{n-1}+1),
\end{equation*}
\vspace{-6mm}

\noindent
let
$T,\kappa \in (0,\infty)$, $f\in C(\R,\R)$, $(\netapproxg{d}{\varepsilon})_{d\in \N,\varepsilon\in (0,1]}\subseteq \setCalN$,
$(c_d)_{d\in\N}\subseteq (0,\infty)$,
for every $d\in \N$ let $g_d\in C(\R^d,\R)$, 
  for every $d\in \N$ let $u_d\in C^{1,2}([0,T]\times\R^d,\R)$,
and assume for all $d\in \N$, $v,w\in \R$, $x\in \R^d$, $\varepsilon \in (0,1]$, $t\in(0,T)$ that
$|f(v)-f(w)|\le \kappa|v-w|$,
$\funcCalR(\netapproxg{d}{\varepsilon})\in C(\R^d,\R)$,
$|(\funcCalR(\netapproxg{d}{\varepsilon}))(x)|\le \kappa d^\kappa (1+\normRd{x}^\kappa)$,
 $\left| g_d(x)-(\funcCalR(\netapproxg{d}{\varepsilon}))(x)\right| \le \varepsilon \kappa d^\kappa (1+\normRd{x}^\kappa)$,
 $\funcCalP(\netapproxg{d}{\varepsilon})\le \kappa d^\kappa \varepsilon^{-\kappa}$,
 $|u_d(t,x)|\le c_d(1+\normRd{x}^{c_d})$,
$u_d(T,x)=g_d(x)$,
and
\begin{equation}  \begin{split}\label{eq:cor.PDE}
 (\tfrac{\partial}{\partial t}u_d)(t,x)+\tfrac{1}{2}(\Delta_xu_d)(t,x)+f(u_d(t,x))=0.
\end{split}     \end{equation}
Then
there exist $(\Psi_{d,\varepsilon})_{d\in \N,\varepsilon \in (0,1]}\subseteq \setCalN$, $\eta \in (0,\infty)$
such that for all
$d\in \N$, $\varepsilon\in (0,1]$ it holds that
$\funcCalR(\Psi_{d,\varepsilon})\in C(\R^d,\R)$, 
$\funcCalP(\Psi_{d,\varepsilon})\le\eta  d^\eta \varepsilon^{-\eta}$, and
\begin{equation}
\left[\int_{[0,1]^d}\left|\smallU_d(0,x)-(\funcCalR(\Psi_{d,\varepsilon}))(x)\right|^2\, dx\right]^{\!\nicefrac{1}{2}}\le \varepsilon.
\end{equation}
\end{corollary}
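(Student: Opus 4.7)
The plan is to reduce \cref{cor:main_thm} directly to \cref{n18} by choosing $\nu_d$ to be Lebesgue measure on $[0,1]^d$ (extended by zero, so that it is a probability measure on $(\R^d,\mathcal B(\R^d))$) and by selecting appropriate values of the abstract parameters of \cref{n18}. No time-reversal or rescaling is needed: the backward PDE \eqref{eq:cor.PDE} already matches, via Feynman--Kac with the generator $\tfrac12\Delta$ of standard Brownian motion, the integral equation \eqref{n03} asserted in \cref{n18}.

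First I would set $p=\max\{\lceil\kappa\rceil,1\}\in\N$, $q=2$, $\alpha=\max\{\kappa,2\}$, $\beta=\kappa$, $\mathfrak p=1$, and let $\boundFG\ge 1$ be a sufficiently large constant depending only on $\kappa$. Since $1+\normRd{x}^\kappa\le 2(1+\normRd{x})^p$ whenever $p\ge\kappa\ge 1$, the corollary's magnitude and interpolation-error hypotheses on $\funcCalR(\netapproxg{d}{\varepsilon})$ become $|(\funcCalR(\netapproxg{d}{\varepsilon}))(x)|\le \boundFG d^p(1+\normRd{x})^p$ and $|g_d(x)-(\funcCalR(\netapproxg{d}{\varepsilon}))(x)|\le\varepsilon\boundFG d^p(1+\normRd{x})^{pq}$ after enlarging $\boundFG$. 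The width and depth hypotheses of \cref{n18} follow from the bound $\funcCalP(\netapproxg{d}{\varepsilon})\le\kappa d^\kappa\varepsilon^{-\kappa}$ together with the elementary inequalities $\supnorm{\funcCalL(\Phi)}\le\funcCalP(\Phi)$ and $\cardL{\Phi}\le\funcCalP(\Phi)+1$, valid for every $\Phi\in\setCalN$ by inspection of $\funcCalP(\Phi)=\sum_{n=1}^{H+1}k_n(k_{n-1}+1)$. Finally, since $\normRd{y}\le\sqrt d$ for every $y\in[0,1]^d$, we have $\left(\int_{[0,1]^d}\normRd{y}^{2pq}\,dy\right)^{1/(2pq)}\le\sqrt d\le\boundFG d^{\mathfrak p}$, as required.

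The only substantive analytic step is to identify the classical solution $u_d$ of \cref{cor:main_thm} with the function produced by \cref{n07} of \cref{n18}. Since $u_d\in C^{1,2}$ satisfies \eqref{eq:cor.PDE} and has polynomial growth, Itô's formula applied to $u_d(t,x+\fwpr_{t-s})$ on $[s,T]$, combined with the exponential integrability of $\normRd{\fwpr_t}$, yields the Feynman--Kac representation
\begin{equation*}
u_d(s,x)=\E\!\left[g_d(x+\fwpr_{T-s})+\int_s^T f\bigl(u_d(t,x+\fwpr_{t-s})\bigr)\,dt\right].
\end{equation*}
Iterating this representation and using the bound $|g_d(x)|\le\boundFG d^p(1+\normRd{x})^p$, the Lipschitz continuity of $f$, and a Gronwall argument in $s$ then yields the sharper growth estimate $\sup_{t\in[0,T]}\sup_{y\in\R^d}|u_d(t,y)|/(1+\normRd{y}^p)<\infty$, so that uniqueness in \cref{n07} of \cref{n18} identifies this $u_d$ with the one produced there.

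Invoking \cref{n18} with $\gamma=1$ then yields a family $(\Psi_{d,\varepsilon})_{d\in\N,\varepsilon\in(0,1]}\subseteq\setCalN$ with $\funcCalR(\Psi_{d,\varepsilon})\in C(\R^d,\R)$, the $L^2$-error estimate \eqref{n15} for the chosen $\nu_d$, and $\funcCalP(\Psi_{d,\varepsilon})\le C_1 d^{\tilde\eta}\varepsilon^{-(5+2\alpha+\beta)}$ for finite constants $C_1,\tilde\eta$ depending only on $\kappa$ and $T$. Taking $\eta$ to be any positive real number exceeding $C_1$, $\tilde\eta$, and $5+2\alpha+\beta$ gives the unified polynomial bound $\funcCalP(\Psi_{d,\varepsilon})\le\eta d^\eta\varepsilon^{-\eta}$ claimed in \cref{cor:main_thm}. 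I expect the Feynman--Kac identification together with the upgrade of the polynomial growth exponent from $c_d$ to the fixed $p$ to be the main step requiring care; the remainder is bookkeeping needed to match the two sets of hypotheses.
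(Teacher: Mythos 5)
Your proposal is correct and follows essentially the same route as the paper: reduce to \cref{n18} with $\nu_d$ the uniform distribution on $[0,1]^d$, identify $u_d$ via the Feynman--Kac formula with the integral equation \eqref{n03} (no time change needed), and verify the width/depth hypotheses from $\funcCalP(\netapproxg{d}{\varepsilon})\le\kappa d^\kappa\varepsilon^{-\kappa}$ via $\supnorm{\funcCalL(\Phi)}\le\funcCalP(\Phi)$ and $\cardL{\Phi}\le\funcCalP(\Phi)$. If anything, you are more careful than the paper on two minor points (ensuring $p\in\N$ when $\kappa\notin\N$, and explicitly upgrading the growth exponent from $c_d$ to $p$ to invoke the uniqueness in \cref{n07}), which the paper handles by citing \cite[Corollary 3.11]{HJK+18}.
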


\begin{proof}[Proof of \cref{cor:main_thm}]
Throughout the proof assume without loss of generality that $\kappa \ge 2$,
let $\supnorm{\cdot} \colon (\cup_{d\in \N} \R^d) \to [0,\infty)$ and $\cardna \colon (\cup_{d\in \N}\R^d) \to \N$ satisfy for all $d\in \N$, $x=(x_1,\ldots,x_d)\in \R^d$ that $\supnorm{x}=\max_{i\in [1,d]\cap \N}|x_i|$ and $\card{x}=d$, let
$\funcCalL\colon \setCalN\to\setCalD$
satisfy
for all $H\in \N$, $k_0,k_1,\ldots,k_H,k_{H+1}\in \N$,
$
\Phi = ((W_1,B_1),\ldots,(W_{H+1},B_{H+1}))\in \prod_{n=1}^{H+1} \left(\R^{k_n\times k_{n-1}} \times\R^{k_n}\right)
$
that
\begin{equation}
\funcCalL(\Phi)= (k_0,k_1,\ldots,k_{H}, k_{H+1}),
\end{equation}
and let $\boundFG =\max\left\{1, \kappa \left[\sup_{r\in [0,\infty)}\frac{1+r^\kappa}{(1+r)^\kappa}\right]\right\}$.
The fact that $\forall \, d\in \N$, $t\in [0,T]$, $x\in \R^d \colon |u_d(t,x)|\le c_d(1+\normRd{x}^{c_d})$, the fact that 
$\forall \, d\in \N$, $x\in \R^d \colon u_d(T,x)=g_d(x)$,
the fact that $\forall \, v,w\in \R\colon |f(v)-f(w)|\le \kappa|v-w|$, \cref{eq:cor.PDE}, and the Feynman-Kac-formula ensure that the functions $u_d$, $d\in \N$, satisfy \cref{n03}. 
Next note that for all $d\in \N$, $\varepsilon \in (0,1]$, $x\in \R^d$ it holds that
\begin{equation}\label{y01}
|(\funcCalR(\netapproxg{d}{\varepsilon}))(x)|\le \kappa d^\kappa (1+\normRd{x}^\kappa)
\le  \kappa \left[\sup_{r\in [0,\infty)}\frac{1+r^\kappa}{(1+r)^\kappa}\right]  d^\kappa  (1+\normRd{x})^\kappa
\le  \boundFG  d^\kappa  (1+\normRd{x})^\kappa,
\end{equation}
\begin{equation}\label{y02}
\left| g_d(x)-(\funcCalR(\netapproxg{d}{\varepsilon}))(x)\right| \le \varepsilon \kappa d^\kappa (1+\normRd{x}^\kappa)\le \varepsilon \kappa \left[\sup_{r\in [0,\infty)}\frac{1+r^\kappa}{(1+r)^\kappa}\right] d^\kappa (1+\normRd{x})^\kappa
\le 
\varepsilon \boundFG d^\kappa (1+\normRd{x})^{2\kappa},
\end{equation}
\begin{equation}\label{y03}
\supnorm{\funcCalL(\netapproxg{d}{\varepsilon})}\le \funcCalP(\netapproxg{d}{\varepsilon})\le \kappa d^\kappa \varepsilon^{-\kappa}\le \boundFG d^\kappa \varepsilon^{-\kappa},
\end{equation}
and
\begin{equation}\label{y04}
\cardL{\netapproxg{d}{\varepsilon}}\le \funcCalP(\netapproxg{d}{\varepsilon})\le \kappa d^\kappa \varepsilon^{-\kappa}
\le  \boundFG d^\kappa \varepsilon^{-\kappa}.
\end{equation}
Moreover, observe that the fact that $\forall\, d\in \N, y\in [0,1]^d\colon \normRd{y}\le \sqrt{d}$ ensures that 
for all $d\in \N$, $\alpha \in (0,\infty)$ it holds that
\begin{equation}\label{y05}
\left(\int_{[0,1]^d}\normRd{y}^\alpha \, dy\right)^{\nicefrac{1}{\alpha}} \le \sqrt{d} \le \boundFG d.
\end{equation}
Combining this with \cref{y01}--\cref{y04} and \cref{n18} (with $\alpha=\kappa$, $\beta=\kappa$, $\boundFG=\boundFG$, $\LipConstF=\kappa$, $p=\kappa$, $q=2$, $\mathfrak{p}=1$, and $\gamma=\frac{1}{2}$ in the notation of \cref{n18}) ensures that there exist $(\Psi_{d,\varepsilon})_{d\in \N,\varepsilon \in (0,1]}\subseteq \setCalN$, $\eta \in (0,\infty)$
such that for all
$d\in \N$, $\varepsilon\in (0,1]$ it holds that
$\funcCalR(\Psi_{d,\varepsilon})\in C(\R^d,\R)$, 
$\funcCalP(\Psi_{d,\varepsilon})\le\eta  d^\eta \varepsilon^{-\eta}$, and
\begin{equation}
\left[\int_{[0,1]^d}\left|\smallU_d(0,x)-(\funcCalR(\Psi_{d,\varepsilon}))(x)\right|^2\, dx\right]^{\!\nicefrac{1}{2}}\le \varepsilon.
\end{equation} 
The proof of \cref{cor:main_thm} is thus completed.
\end{proof}

\subsection*{Acknowledgements}
This project has been partially supported through the German Research Foundation via research
grants HU1889/6-1 (MH) and UR313/1-1 (TK) and via RTG 2131 \emph{High-dimensional Phenomena in Probability - Fluctuations and Discontinuity} (TAN).


\def\cprime{$'$}

\end{document}